\documentclass[12pt,a4paper]{article}

\usepackage[verbose]{geometry}
\geometry{%
headheight=\baselineskip, %
footskip=2\baselineskip, %
width=5.5in, %
height=45.1\baselineskip, %
centering, %
heightrounded=true}

\usepackage[T1]{fontenc} 
\usepackage[utf8]{inputenc}

\ifdefined\ifdraftdoc

\else
  \usepackage{ifdraft}
  \makeatletter
  \let\ifdraftdoc\if@draft
  \makeatother
\fi

\usepackage{needspace}

\usepackage{multirow}
\usepackage{amsmath}
\allowdisplaybreaks[4]
\usepackage{amsthm}
\usepackage{amssymb}
\usepackage{amsfonts}

\renewcommand{\qedsymbol}{$\blacksquare$}
\newcommand{\openqedsymbol}{$\square$}
\newcommand{\exercisesymbol}{$\lozenge$}

\newcommand{\openproof}{\renewcommand{\qedsymbol}{\openqedsymbol}}

\usepackage{mathrsfs}
\DeclareMathAlphabet{\mathcal}{OMS}{cmsy}{m}{n}
\usepackage{stmaryrd}

\usepackage{mathtools}

\usepackage[british]{babel}

\usepackage[autostyle]{csquotes}
\usepackage{hyphenat}

\usepackage{etoolbox}
\usepackage{xstring}

\usepackage[shortlabels]{enumitem}

\usepackage[dvipsnames,hyperref]{xcolor}

\usepackage{tikz}
\usepackage{tikz-cd}

\usetikzlibrary{arrows, shapes.geometric}
\tikzset{baseline=(current bounding box.center)}

\let\LiningNumbers\relax

\linespread{1.1}

\DeclareTextFontCommand{\textrmup}{\rmfamily\mdseries\upshape}
\DeclareTextFontCommand{\textbfup}{\rmfamily\bfseries\upshape}
\DeclareTextFontCommand{\textttup}{\ttfamily\mdseries\upshape}
\DeclareTextFontCommand{\textsfup}{\sffamily\mdseries\upshape}

\setlist[enumerate]{font=\upshape}

\DeclareFontFamily{OT1}{pzc}{}
\DeclareFontShape{OT1}{pzc}{mb}{it}{
   <-> s * [1.150] pzcmi7t
}{}

\DeclareMathAlphabet{\mathpzc}{OT1}{pzc}{mb}{it}

\DeclareMathSymbol{\epsilon}{\mathord}{letters}{"22}
\DeclareMathSymbol{\phi}{\mathord}{letters}{"27}
\DeclareMathSymbol{\varepsilon}{\mathord}{letters}{"0F}
\DeclareMathSymbol{\varphi}{\mathord}{letters}{"1E}

\usepackage{microtype}

\makeatletter
\newcommand*{\noprelistbreak}{\@nobreaktrue\nopagebreak}
\makeatother

\usepackage{xspace}

\newcommand{\hairspace}{\ifmmode\mskip1mu\else\kern0.08em\fi}
\newcommand{\dash}{\unskip\hairspace ---\hairspace\hspace{0pt}\ignorespaces}

\makeatletter
\def\abbrdot{\@ifnextchar.{}{.\@\xspace}}
\makeatother
\newcommand{\etc}{etc\abbrdot}
\newcommand{\eg}{e.g\abbrdot}
\newcommand{\ie}{i.e\abbrdot}
\newcommand{\resp}{resp\abbrdot}
\newcommand{\confer}{cf\abbrdot}

\newcommand{\opcit}{op.\@ cit\abbrdot}

\newcommand{\Chap}{Ch.\@~}

\newcommand{\Sect}{\S\hairspace}
\newcommand{\Sects}{\S\S\hairspace}

\newcommand{\strong}[1]{\textbfup{#1}}

\renewcommand{\implies}{\ifmmode\Longrightarrow\else$\Rightarrow$\expandafter\xspace\fi}
\renewcommand{\iff}{\ifmmode\Longleftrightarrow\else$\Leftrightarrow$\expandafter\xspace\fi}

\makeatletter
\let\internal@prime\prime
\renewcommand{\prime}{\ifmmode\internal@prime\else$\sp{\internal@prime}$\expandafter\xspace\fi}
\makeatother

\ifdefined\nequiv

\else
  \def\nequiv{\not\equiv}
\fi

\newcommand{\blank}{\mathord{-}}
\newcommand{\pblank}{\mathord{(-)}}

\newcommand{\bcirc}{\bullet}

\newcommand{\embedinto}{\hookrightarrow}
\newcommand{\hoto}{\Rightarrow}

\makeatletter
\newcommand{\profto}{\mathrel{\mathpalette\profto@int\relax}}
\newcommand{\profto@int}[2]{\ooalign{$#1\rightarrow$\cr\hfil$#1\mapsfromchar$\hfil\cr}}
\makeatother

\DeclareMathSymbol{.}{\mathpunct}{letters}{"3A}
\DeclareMathSymbol{:}{\mathrel}{operators}{"3A}

\DeclareMathOperator{\mor}{mor}

\DeclareMathOperator{\und}{und}
\DeclareMathOperator{\weq}{weq}

\newcommand{\pbprod}{\mathbin{\square}}
\newcommand{\cotens}{\mathbin{\pitchfork}}

\DeclareMathOperator{\discr}{disc}

\newcommand{\Ho}[1][]{\mathop{\mathrm{Ho}\ifstrempty{#1}{}{\sb{#1}}}}

\newcommand{\LH}{\mathbf{L}^\mathrm{H}}

\DeclareMathOperator{\dom}{dom}
\DeclareMathOperator{\codom}{codom}

\let\prolim\varprojlim
\let\indlim\varinjlim

\newcommand{\id}{\mathrm{id}}

\AtBeginDocument{}

\newcommand*{\lr}[3]{\mathopen{}\mathclose{\left#1{#2}\right#3}}
\newcommand*{\lmr}[5]{\mathopen{}\mathclose{\left#1{#2}\,\middle#3\,{#4}\right#5}}

\newcommand*{\ordlr}[3]{\mathord{\left#1{#2}\right#3}}
\newcommand*{\argp}[1]{\ifstrempty{#1}{}{\lr({#1})}}
\newcommand*{\argptwo}[2]{\lr({#1, #2})}
\newcommand*{\argptwotwo}[4]{\lr({#1, #2; #3, #4})}
\newcommand*{\argb}[1]{\ifstrempty{#1}{}{\lr[{#1}]}}
\newcommand*{\parens}[1]{\lr({#1})}

\newcommand*{\bracket}[1]{\lr[{#1}]}

\newcommand*{\abs}[1]{\ordlr|{#1}|}
\newcommand*{\tuple}[1]{\ordlr({#1})}

\newcommand*{\prodtuple}[1]{\lr<{#1}>}

\newcommand*{\setbuilder}[2]{\mathord{\lmr\{{#1}|{#2}\}}}
\newcommand*{\seqbuilder}[2]{\mathord{\lmr({#1}|{#2})}}

\ifdefined\fracslash

\else
  \def\fracslash{\mathbin{/}}
\fi
\ifdefined\divslash

\else
  \def\divslash{\mathbin{/}}
\fi

\newcommand*{\ul}[1]{\smash{\underline{#1}}\vphantom{#1}}

\makeatletter
\newcommand*{\optargp}{\new@ifnextchar\bgroup{\argp}{}}
\newcommand*{\optargptwo}{\new@ifnextchar\bgroup{\expandafter\optargptwo@first}{}}
\newcommand*{\optargptwo@first}[1]{\new@ifnextchar\bgroup{\argptwo{#1}}{\argp{#1}}}
\newcommand*{\optargptwotwo}{\new@ifnextchar\bgroup{\expandafter\optargptwotwo@first}{}}
\newcommand*{\optargptwotwo@first}[2]{\new@ifnextchar\bgroup{\argptwotwo{#1}{#2}}{\argptwo{#1}{#2}}}

\newcommand*{\sbtwo}[2]{\sb{\lr({{#1}, {#2}})}}

\newcommand*{\optsb}{\new@ifnextchar\bgroup{\sb}{}}
\newcommand*{\optsp}{\new@ifnextchar\bgroup{\sp}{}}
\newcommand*{\optsbtwo}{\new@ifnextchar\bgroup{\expandafter\optsbtwo@first}{}}
\newcommand*{\optsbtwo@first}[1]{\new@ifnextchar\bgroup{\sbtwo{#1}}{\sb{#1}}}

\newcommand*{\optsbargptwo}[1][]{\sb{#1}\optargptwo}

\newcommand*{\optspsbargptwo}[1][]{\sp{#1}\optsbargptwo}

\newcommand*{\set}[1]{\new@ifnextchar\bgroup{\setbuilder{#1}}{\ordlr\{{#1}\}}}
\newcommand*{\seq}[1]{\new@ifnextchar\bgroup{\seqbuilder{#1}}{\tuple{#1}}}
\makeatother

\ifdefined\hash

\else
  \def\hash{\#}
\fi
\newcommand*{\card}[1]{\lr|{#1}|}
\ifdefined\powerset
  \undef\powerset
\else

\fi
\newcommand*{\powerset}[1][]{\mathscr{P}\ifstrempty{#1}{\hairspace}{\sb{#1}} \optargp}

\newcommand*{\smashsp}[2]{{#1}\sp{\smash{#2}}}

\newcommand*{\op}[1]{\smashsp{#1}{\mathrm{op}}}

\newcommand*{\intr}[1]{\smashsp{#1}{\circ}}
\newcommand*{\inv}[2][1]{{#2}^{-#1}}

\makeatletter
\newcommand*{\ctchoice}[2]{%
  \ifdef{\ct@homstyle}%
  {#2}%
  {#1}}
\newcommand*{\ctchoicetwo}[5]{%
  \ifdef{\ct@homcatstyle}%
  {#2}{%
  \ifdef{\ct@transfstyle}%
  {#3}{%
  \ifdef{\ct@catstyle}%
  {#4}{%
  \ifdef{\ct@homstyle}%
  {#5}{%
  {#1}}}}}}
  
\newcommand*{\DefineCategory}[2]{\DeclareRobustCommand{#1}{\ctchoice{\mathbf{#2}}{\mathbf{#2}}}}

\newcommand*{\cat}[1]{%
  \begingroup%
  \def\ct@catstyle{cat}%
  #1%
  \endgroup}
\newcommand*{\bicat}[1]{%
  \begingroup
  \def\ct@bicatstyle{bicat}%
  #1%
  \endgroup}
\newcommand*{\Hom}[1][]{%
  \begingroup%
  \def\ct@homstyle{hom}%
  \ifstrempty{#1}{\mathrm{Hom}}{#1}%
  \endgroup%
  \optargptwo}
\newcommand*{\HHom}[1][]{%
  \begingroup%
  \def\ct@homcatstyle{homcat}%
  \ifstrempty{#1}{\mathbf{Hom}}{#1}%
  \endgroup
  \optargptwo}
\newcommand*{\hpty}[1][]{%
  \begingroup%
  \def\ct@transfstyle{transf}%
  \ifstrempty{#1}{\mathrm{Nat}}{#1}%
  \endgroup
  \optargptwotwo}

\makeatother

\DefineCategory{\Set}{Set}

\DefineCategory{\Simplex}{\Delta}
\DefineCategory{\SSet}{sSet}
\DefineCategory{\SSSet}{ssSet}

\newcommand{\Cat}{\ctchoicetwo{\mathfrak{Cat}}{\mathbf{Fun}}{\mathbf{Fun}}{\mathbf{Cat}}{\mathrm{Fun}}}

\newcommand*{\commacat}[2]{\ordlr({#1 \mathbin{\downarrow} #2})}

\newcommand*{\overcat}[2]{\mathord{#1 \sb{\mathord{\divslash} #2}}}

\newcommand*{\nv}[1][]{\mathrm{N}\sp{#1}\optargp}
\newcommand*{\Nv}[1][]{\mathbf{N}\sp{#1}\optargp}

\newcommand*{\Barcx}[4][]{\mathrm{B}\ifstrempty{#1}{}{\sb{#1}}\argp{{#2}, {#3}, {#4}}}

\newcommand{\totalR}{\mathbf{R}}

\newcommand*{\Cocyc}[1][]{\ifstrempty{#1}{{\mathbb{H}}^{{\sim}{\to}}}{{#1}^{\bracket{-1 ; 1}}}\optsbargptwo}

\newcommand*{\homs}{\mathpzc{h}\optsb}

\newcommand*{\RHom}[1][]{\ifstrempty{#1}{\totalR \mathrm{Hom}}{\totalR \mathrm{Hom}_{#1}}\optargptwo}

\newcommand*{\xHom}[3][]{\ifstrempty{#1}{\ordlr[{{#2}, {#3}}]}{\ordlr[{{#2}, {#3}}]\sb{#1}}}
\newcommand*{\ulHom}[1][]{\ifstrempty{#1}{\ul{\Hom}}{\ul{\Hom[#1]}}\optargptwo}

\newcommand*{\Ext}{\mathrm{Ext}\optspsbargptwo}

\newcommand*{\FCorr}[1][]{\ifstrempty{#1}{\mathbf{FCor}}{{#1}^\mathrm{fun}}\optargptwo}

\newcommand*{\Func}[2]{\xHom{#1}{#2}}
\newcommand*{\RelFun}[2]{\xHom[\mathrm{h}]{#1}{#2}}
\newcommand*{\ZzCat}[2]{{#2}^{#1}}

\newcommand*{\wcolim}[2][]{\mathchoice%
  {\mathop{{\varinjlim}^{#2}}_{{#1}\phantom{#2}}}%
  {\varinjlim_{#1}^{#2}}%
  {\varinjlim_{#1}^{#2}}%
  {\varinjlim_{#1}^{#2}}}

\newcommand{\hocolim}[1][]{\mathop{\mathrm{ho}{\varinjlim}}_{#1}}

\newcommand{\oplaxcolim}[1][]{\wcolim[#1]{\mathrm{Gr}}}

\newcommand{\Path}{\mathrm{Path} \optargp}

\makeatletter
\@ifclassloaded{amsart}{}{ %
\renewcommand\footnoterule{%
  \vfill
  \kern-3\p@
  \hrule\@width.4\columnwidth
  \kern2.6\p@}

\renewcommand*{\@makefnmark}{\hbox{\textsuperscript{\normalfont [\LiningNumbers\@thefnmark]}}}
\renewcommand*{\@makefntext}[1]{\parindent 1.0em\noindent\ifdefempty{\@thefnmark}{}{\hb@xt@-1.0em{\hss \normalfont [\LiningNumbers \@thefnmark]}\hspace{1.0em}}#1}
\@addtoreset{footnote}{chapter}
\newcommand{\footpar}[1]{\gdef\@thefnmark{}\@footnotetext{#1}}

\newcommand{\hangsecnum}{\def\@seccntformat##1{\llap{\csname the##1\endcsname\quad}}}
}
\makeatother

\usepackage[natbib,safeinputenc,backend=biber,style=authoryear-comp,bibstyle=authoryear]{biblatex}

\DeclareSortingScheme{mysort}{
  \sort{
    \field{presort}
  }
  \sort[final]{
    \field{sortkey}
  }
  \sort{
    \name{sortname}
    \name{author}
    \name{editor}
    \name{translator}
    \field{sorttitle}
    \field{title}
  }
  \sort{
    \field{sortyear}
    \field{year}
  }
  \sort{
    \field{month}
  }
  \sort{
    \field{day}
  }
  \sort{
    \field{journal}
  }
  \sort{
    \field[padside=left,padwidth=4,padchar=0]{volume}
    \literal{0000}
  }
  \sort{
    \field{series}
  }
  \sort{
    \field[padside=left,padwidth=4,padchar=0]{number}
    \literal{0000}
  }
  \sort{
    \field{sorttitle}
    \field{title}
  }
}

\ExecuteBibliographyOptions{sorting=mysort,block=space,useprefix=true,maxcitenames=3,maxbibnames=10,mergedate=false,pagetracker=false}

\usepackage[colorlinks,linkcolor=Blue,citecolor=Blue,urlcolor=blue,pdfpagelabels,pdffitwindow=false]{hyperref}

\usepackage{thmtools}

\declaretheorem[style=plain,parent=section,title=Theorem,refname=theorem,Refname=Theorem]{thm}
\declaretheorem[style=plain,sibling=thm,title=Proposition,refname=proposition,Refname=Proposition]{prop}
\declaretheorem[style=plain,sibling=thm,title=Lemma,refname=lemma,Refname=Lemma]{lem}
\declaretheorem[style=plain,sibling=thm,title=Corollary,refname=corollary,Refname=Corollary]{cor}

\declaretheorem[style=definition,sibling=thm,title=Definition,refname=definition,Refname=Definition]{dfn}

\declaretheorem[style=definition,sibling=thm,title=Example,refname=example,Refname=Example]{example}

\declaretheoremstyle[style=remark,headfont=\scshape]{remark}
\declaretheorem[style=remark,sibling=thm,title=Remark,refname=remark,Refname=Remark]{remark}

\declaretheorem[style=plain,numbered=no,title=Theorem]{thm*}
\declaretheorem[style=plain,numbered=no,title=Proposition]{prop*}
\declaretheorem[style=remark,numbered=no,title=Remark]{remark*}
\declaretheorem[style=definition,numbered=no,title=Example]{example*}

\makeatletter
\@counteralias{numpar}{thm}

\newcommand*{\makenumpar}[1][]{%
  \par%
  \refstepcounter{numpar}%
  {\normalfont{\bfseries\textparagraph}\hspace{0.5em}\thenumpar\ifstrempty{#1}{}{ ({\normalfont #1})}.} }
\makeatother

\AtBeginDocument{%
\hyphenation{pull-back pull-backs push-out push-outs co-pro-duct co-pro-ducts co-com-plete sub-ob-ject sub-ob-jects more-over quasi-com-pact de-gen-er-ate pro-vided Gro-then-dieck Law-vere mod-ule sub-al-ge-bra fi-brant co-fi-brant fi-brant-ly co-fi-brant-ly bi-cat-e-go-ry bi-cat-e-gor-ic-al bi-cat-e-gor-ic-al-ly qua-si-cat-e-go-ry qua-si-cat-e-go-ries qua-si-cat-e-go-ric-al}%
}


\setlength{\bibnamesep}{\baselineskip}
\renewcommand*{\bibnamedash}{}
\newcommand*{\authoryearpunct}{\hspace*{-\bibhang}\bibsentence}

\makeatletter

\appto\citesetup{\LiningNumbers}

\renewbibmacro*{cite:label}{%
  \iffieldundef{label}
    {\printtext[bibhyperref]{\printfield[citetitle]{labeltitle}}}
    {\printtext[bibhyperref]{\printfield{label}}}}

\renewbibmacro*{date+extrayear}{%
  \iffieldundef{shorthand}
    {\iffieldundef{labelyear}
      {}
      {\printtext[brackets]{\printfield{labelyear}\printfield{extrayear}}}\addspace}
    {\printtext[brackets]{\printfield{shorthand}}}}

\renewbibmacro*{author}{%
  \ifboolexpr{
    test \ifuseauthor
    and
    not test {\ifnameundef{author}}
  }
    {\usebibmacro{bbx:dashcheck}
       {\bibnamedash}
       {\usebibmacro{bbx:savehash}%
        \printnames{author}%
    \newline\setunit{\authoryearpunct}}%
     \iffieldundef{authortype}
       {}
       {\usebibmacro{authorstrg}%
    \setunit{\addspace}}}%
    {\global\undef\bbx@lasthash
     \usebibmacro{labeltitle}%
     \setunit*{\addspace}}%
  \usebibmacro{date+extrayear}}

\renewbibmacro*{bbx:editor}[1]{%
  \ifboolexpr{
    test \ifuseeditor
    and
    not test {\ifnameundef{editor}}
  }
    {\usebibmacro{bbx:dashcheck}
       {\bibnamedash}
       {\printnames{editor}%
    \newline\setunit{\authoryearpunct}%
    \usebibmacro{bbx:savehash}}%
     \usebibmacro{#1}%
     \clearname{editor}%
     \setunit{\addspace}}%
    {\global\undef\bbx@lasthash
     \usebibmacro{labeltitle}%
     \setunit*{\addspace}}%
  \usebibmacro{date+extrayear}}

\renewbibmacro*{bbx:translator}[1]{%
  \ifboolexpr{
    test \ifusetranslator
    and
    not test {\ifnameundef{translator}}
  }
    {\usebibmacro{bbx:dashcheck}
       {\bibnamedash}
       {\printnames{translator}%
    \newline\setunit{\authoryearpunct}%
    \usebibmacro{bbx:savehash}}%
     \usebibmacro{translator+othersstrg}%
     \clearname{translator}%
     \setunit{\addspace}}%
    {\global\undef\bbx@lasthash
     \usebibmacro{labeltitle}%
     \setunit*{\addspace}}%
  \usebibmacro{date+extrayear}}

\makeatother


\appto\bibsetup{\raggedright}
\appto\bibfont{\LiningNumbers\small}

\addbibresource{Bibliography.bib}

\usepackage{titling}
\usepackage{fancyhdr}
\pagestyle{fancy}

\fancyhead{}
\fancyhead[CE,CO]{\scshape\thetitle}
\fancyfoot{}
\fancyfoot[CE,CO]{\thepage}

\hangsecnum

\title{Cocycles in categories of fibrant objects}
\author{Zhen~Lin Low}
\date{28 September 2015}

\begin{document}

\maketitle
\footpar{Department of Pure Mathematics and Mathematical Statistics, University of Cambridge, Cambridge, UK. \textsc{E-mail address}: \texttt{Z.L.Low@dpmms.cam.ac.uk}}

\begin{abstract}
We establish that a category of fibrant objects (in the sense of Brown) admits a Dwyer--Kan homotopical calculus of right fractions. This is done using a homotopical calculus of cocycles, which is an auxiliary structure that can be defined on every category of fibrant objects. As an application, we deduce some non-abelian versions of the Verdier hypercovering theorem.
\end{abstract}


\section*{Introduction}

An $\tuple{\infty, 1}$-category is like a category, except that one has hom-spaces instead of hom-sets, and the axioms hold only up to a coherent system of homotopies. In turn, a category of fibrant objects in the sense of \citet{Brown:1973} is a convenient presentation of an $\tuple{\infty, 1}$-category with finite homotopy limits. This heuristic picture can be made precise: see \citep{Szumilo:2014} and \citep{Kapulkin-Szumilo:2015}. As the name suggests, the full subcategory of fibrant objects in a model category in the sense of \citet{Quillen:1967} is indeed a category of fibrant objects \dash so \eg for any ring $R$, the category of unbounded chain complexes of left $R$-modules is a category of fibrant objects \dash but there are other examples: for instance, \citet{Uuye:2013} showed that the category of $C^*$-algebras is a category of fibrant objects, and \citet{Andersen-Grodal:1997} showed that the associated $\tuple{\infty, 1}$-category cannot be presented by a closed model category. Also, a right proper model category in which the class of weak equivalences is closed under binary product \dash such as the category of simplicial sets with the usual Kan--Quillen model structure \dash is itself a category of fibrant objects (with the same weak equivalences, but more fibrations). 

The $\tuple{\infty, 1}$-category presented by a category of fibrant objects can be constructed in various ways, some more explicit than others. The general idea is as follows: given a category $\mathcal{C}$ with a subcategory $\mathcal{W} \subseteq \mathcal{C}$ of weak equivalences, the associated $\tuple{\infty, 1}$-category is the one obtained by regarding $\mathcal{C}$ as an $\tuple{\infty, 1}$-category and freely inverting the morphisms in $\mathcal{W}$. The simplicial and hammock localisations introduced by \citet{Dwyer-Kan:1980a,Dwyer-Kan:1980b} are two ways of doing this. A third way, due to \citet{Rezk:2001}, is to take a fibrant replacement of the Rezk classification diagram with respect to the model structure for complete Segal spaces.
These constructions should be understood as enrichments of the ordinary localisation of a category with weak equivalences which capture more of the implicit ``higher-dimensional information''. For example, in the case of the category of unbounded chain complexes of abelian groups, the hom-spaces of the corresponding $\tuple{\infty, 1}$-category are equivalent to the connective truncation of the classical derived hom-complexes, so \eg the $n$-th homotopy group of the space of homotopy morphisms $A \to B$ is naturally isomorphic to the homology group $H_n \argp{\RHom[\mathbb{Z}]{A}{B}}$, also known as the hyper-ext group $\Ext[-n][\mathbb{Z}]{A}{B}$.

In applications, it is desirable to have convenient formulae for the hom-spaces of the $\tuple{\infty, 1}$-category presented by a category of fibrant objects. Although the hom-spaces of the hammock localisation already have an explicit description, just as in the case of ordinary localisation, one can sometimes obtain a simpler description when the pair $\tuple{\mathcal{C}, \mathcal{W}}$ has good properties. For instance, when $\mathcal{C}$ is a category of fibrant objects and $\mathcal{W}$ is its subcategory of weak equivalences, it is well known that every morphism in $\mathcal{C} \argb{\inv{\mathcal{W}}}$ can be represented by what \citet{Jardine:2009} calls `cocycles', \ie zigzags of the form
\[
\begin{tikzcd}
\bullet \rar[leftarrow]{\simeq} &
\bullet \rar &
\bullet
\end{tikzcd}
\]
and the main goal of this paper is to show that arbitrary zigzags in categories of fibrant objects can be reduced to cocycles in a homotopically sensitive way. More precisely:

\begin{thm*}
Any category of fibrant objects admits a homotopy calculus of right fractions in the sense of \citet{Dwyer-Kan:1980b}.
\end{thm*}

It is not so hard to verify the claim when we have \emph{functorial} path objects: indeed, this is a folklore result, and the closely related case of a Waldhausen category with a cylinder functor is described in \citep[\Sect 1]{Weiss:1999a}. One can then determine the homotopy type of the hom-spaces of the hammock localisation of a general category of fibrant objects by embedding it in a category of simplicial presheaves \dash this is the strategy employed in the proof of Proposition~3.23 in \citep{Cisinski:2010b} \dash but we will take a different approach to eliminating the hypothesis of functorial path objects. The key observation is that there is a contractible space parametrising certain special (section of trivial fibration, trivial fibration)-factorisations of weak equivalences. Almost everything else follows formally: indeed, we will treat this situation axiomatically by defining the notion of a homotopical calculus of cocycles.

As an application of the main result, we consider the category of (locally fibrant) simplicial presheaves on a site. From the point of view of homotopy theory, the central problem of sheaf theory is essentially the determination of the homotopy type the space of sections (over a given object in the site) of the hypersheaf associated with a given simplicial presheaf: for example, as \citet[\Sect 3]{Brown:1973} observed, sheaf cohomology can be paraphrased in these terms via the formula below,
\[
R^n \Gamma \argp{T, A} \cong \pi_{m-n} \totalR \Gamma \argp{T, K \argp{A, m}}
\]
where $A$ is an abelian (pre)sheaf, $K \argp{A, m}$ is the simplicial (pre)sheaf corresponding (under Dold--Kan) to the chain complex consisting of just $A$ in degree $m$, and $m \ge n$. The Verdier hypercovering theorem in its classical form is a colimit formula for sheaf cohomology in terms of generalised Čech cochain complexes, and following a suggestion of \citet{MO:answer-165324}, we derive a non-abelian version that computes (up to weak homotopy equivalence) $\totalR \Gamma \argp{T, X}$ for any locally fibrant simplicial presheaf $X$ in terms of a homotopy colimit of simplicial sets of generalised sections of $X$.

\subsection*{Outline}

\begin{itemize}
\item In \Sect 1, we collect some miscellaneous facts about homotopy colimits.

\item In \Sect 2, we review the definitions and fundamental results regarding zigzags in relative categories.

\item In \Sect 3, we introduce the notion of a homotopical calculus of cocycles, which is a sufficient condition for a category with weak equivalences to admit a homotopical calculus of right fractions.

\item In \Sect 4, we prove that a category of fibrant objects with functorial path objects admits a homotopical calculus of cocycles.

\item In \Sect 5, we define the notion of a simplicial category of fibrant objects and give a homotopy colimit formula for the hom-spaces of its hammock localisation.

\item In \Sect 6, we apply this theory to the study of simplicial presheaves on a site.

\item In \Sect A, we show that a general category of fibrant objects (\ie possibly without functorial path objects) admits a homotopical calculus of cocycles.
\end{itemize}

\subsection*{Conventions}

It will be convenient to implicitly assume that categories are small, especially in \Sects 2\hairspace --\hairspace 5 and \Sect A. Since the categories of interest are usually not small, it is not possible to apply these results as stated literally; one way to work around this to adopt a suitable universe axiom. Alternatively, because most of the categories under consideration in \Sect 6 are \emph{essentially} small, one could just replace them with small skeletons where necessary, thereby avoiding the use of universes.

\subsection*{Acknowledgements}

Discussions with Aaron Mazel-Gee led to the discovery and correction of some errors in earlier drafts of this paper. His comments also helped improve the exposition.

The author gratefully acknowledges financial support from the Cambridge Commonwealth, European and International Trust and the Department of Pure Mathematics and Mathematical Statistics.

\section{Homotopy colimits}

The following definition is due to \citet{Bousfield-Kan:1972}.

\begin{dfn}
Let $\ul{X} : \op{\ul{\mathcal{C}}} \to \ul{\cat{\SSet}}$ be a small simplicially enriched diagram. The \strong{homotopy colimit} $\hocolim[\op{\ul{\mathcal{C}}}] \ul{X}$ is the diagonal of the bisimplicial set $\Barcx[\bullet]{\ul{X}}{\ul{\mathcal{C}}}{\Delta 1}$ defined below,
\[
\Barcx[n]{\ul{X}}{\ul{\mathcal{C}}}{\Delta 1} = \coprod_{\tuple{c_0, \ldots, c_n}} X \argp{c_n} \times \ulHom[\mathcal{C}]{c_{n-1}}{c_n} \times \cdots \times \ulHom[\mathcal{C}]{c_0}{c_1}
\]
where the disjoint union is indexed over $\parens{n + 1}$-tuples of objects in $\mathcal{C}$, with the evident face and degeneracy operators.
\end{dfn}

\begin{example}
Let $X : \op{\mathcal{C}} \to \cat{\Set}$ be a small diagram and let $\mathcal{D}$ be the category of elements of $X$, \ie the \emph{opposite} of the comma category $\commacat{1}{X}$, where $1$ is a singleton set. Regarding $X$ as a diagram $\op{\mathcal{C}} \to \cat{\SSet}$, it is not hard to see that $\hocolim[\mathcal{C}] X$ is (isomorphic to) the nerve $\nv{\mathcal{D}}$.
\end{example}

We will need some miscellaneous facts about homotopy cofinality. First, let us say that a \strong{weakly contractible category} is a category $\mathcal{A}$ such that the unique morphism $\nv{\mathcal{A}} \to \Delta^0$ is a weak homotopy equivalence of simplicial sets. We then make the following definition:

\begin{dfn}
A \strong{homotopy cofinal functor} is a functor $F : \mathcal{C} \to \mathcal{D}$ with the following property: for all objects $d$ in $\mathcal{D}$, the comma category $\commacat{d}{F}$ is weakly contractible.
\end{dfn}

\begin{lem}
\label{lem:Grothendieck.fibrations.and.homotopy.cofinality}
Let $P : \mathcal{E} \to \mathcal{B}$ is a Grothendieck fibration. The following are equivalent:
\begin{enumerate}[(i)]
\item The (strict) fibres of $P$ are weakly contractible categories.

\item $P$ is a homotopy cofinal functor.
\end{enumerate}
\end{lem}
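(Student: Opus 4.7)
The plan is to show, for each object $b$ in $\mathcal{B}$, that the strict fibre $\mathcal{E}_b$ is a deformation retract of the comma category $\commacat{b}{P}$ in a sense strong enough to yield an equivalence of nerves. Concretely, I would exhibit an adjunction between these two categories whose unit and counit are natural transformations, and then invoke the standard fact that adjoint functors induce homotopy equivalences on nerves (natural transformations become simplicial homotopies under the nerve). Since homotopy equivalences are in particular weak homotopy equivalences, $\nv{\mathcal{E}_b}$ is weakly contractible if and only if $\nv{\commacat{b}{P}}$ is, which is precisely the content of (i) $\iff$ (ii).

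To construct the adjunction, first note that there is a tautological inclusion $\iota_b \colon \mathcal{E}_b \to \commacat{b}{P}$ sending an object $e$ with $P(e) = b$ to the pair $(e, \id_b)$ and acting as the identity on morphisms. In the other direction, using the fact that $P$ is a Grothendieck fibration, choose for each object $(e, f \colon b \to P(e))$ of $\commacat{b}{P}$ a cartesian lift $\tilde{f} \colon f^* e \to e$ of $f$, so that $f^* e$ lies in $\mathcal{E}_b$. I would let $R_b \colon \commacat{b}{P} \to \mathcal{E}_b$ send $(e, f)$ to $f^* e$, extending to morphisms by the universal property of cartesian arrows.

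Next I would check that $R_b$ is right adjoint to $\iota_b$. This is essentially a reformulation of the universal property of cartesian lifts: a morphism $\iota_b(e') \to (e, f)$ in $\commacat{b}{P}$ is a morphism $g \colon e' \to e$ in $\mathcal{E}$ with $P(g) = f$, and the cartesian property of $\tilde{f}$ says precisely that such $g$ correspond bijectively and naturally with morphisms $e' \to f^* e$ in $\mathcal{E}_b$. The counit of the adjunction at $(e, f)$ is $\tilde{f}$, and the unit at $e \in \mathcal{E}_b$ is the identity on $e$ (since $\id_b^* e = e$ when the cleavage is chosen so that identities lift to identities; alternatively, the unit is the canonical iso arising from the universal property, which is enough).

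The only thing that might give pause is that the definition of $R_b$ depends on a choice of cleavage, and one might worry about functoriality. But this is harmless: any two choices of cartesian lifts differ by a unique vertical isomorphism, so the resulting functors are naturally isomorphic; in any case, once one fixes a cleavage by appeal to the axiom of choice, the assignment $(e, f) \mapsto f^* e$ is functorial by the universal property. This is the only real content of the argument, and it is a formal manipulation rather than a genuine obstacle.
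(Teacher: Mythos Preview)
Your proposal is correct and follows essentially the same approach as the paper: the paper also uses the inclusion $\mathcal{E}_b \hookrightarrow \commacat{b}{P}$, asserts (as ``well known'') that it has a right adjoint when $P$ is a Grothendieck fibration, and concludes via the fact that adjoint functors induce homotopy equivalences of nerves. You have simply supplied the details of that well-known adjunction.
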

\begin{proof}
Let $b$ be an object in $\mathcal{B}$. There is a functor $\inv{P} \set{b} \embedinto \commacat{b}{P}$ sending objects $e$ in $\inv{P} \set{b}$ to $\tuple{e, \id_b}$ in $\commacat{b}{P}$, and it is well known that this functor has a right adjoint when $P : \mathcal{E} \to \mathcal{B}$ is a Grothendieck fibration. Since adjoint functors induce homotopy equivalences of nerves, it follows that $\inv{P} \set{b}$ is weakly contractible if and only if $\commacat{b}{P}$ is weakly contractible.
\end{proof}

\begin{lem}
\label{lem:homotopy.cofinal.functors.and.fully.faithful.functors}
Let $F : \mathcal{C} \to \mathcal{D}$ and $G : \mathcal{D} \to \mathcal{E}$ be functors. If $G F : \mathcal{C} \to \mathcal{E}$ is homotopy cofinal and $G : \mathcal{D} \to \mathcal{E}$ is fully faithful, then $F : \mathcal{C} \to \mathcal{D}$ is also homotopy cofinal.
\end{lem}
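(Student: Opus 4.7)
The plan is to show directly that for every object $d$ in $\mathcal{D}$, the comma category $\commacat{d}{F}$ is weakly contractible, by exhibiting it as isomorphic (as an ordinary category, not merely weakly equivalent) to $\commacat{G d}{G F}$, which is weakly contractible by the assumption on $G F$.

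More precisely, I would define a functor $\Phi : \commacat{d}{F} \to \commacat{G d}{G F}$ sending an object $\tuple{c, f : d \to F c}$ to $\tuple{c, G f : G d \to G F c}$ and sending a morphism $h : \tuple{c, f} \to \tuple{c', f'}$ (\ie a morphism $h : c \to c'$ in $\mathcal{C}$ with $F h \circ f = f'$) to $h$ itself, regarded now as a morphism $\tuple{c, G f} \to \tuple{c', G f'}$. This is well defined because $G(F h \circ f) = G F h \circ G f$. The full faithfulness of $G$ then gives:
\begin{itemize}
\item essential surjectivity (in fact, bijectivity on objects) of $\Phi$, since every $g : G d \to G F c$ is of the form $G f$ for a unique $f : d \to F c$;
\item full faithfulness of $\Phi$, since for fixed $c, c'$ the identity map from $\setbuilder{h : c \to c'}{F h \circ f = f'}$ to $\setbuilder{h : c \to c'}{G F h \circ G f = G f'}$ is a bijection, these two subsets of $\Hom[\mathcal{C}]{c}{c'}$ being equal by faithfulness of $G$.
\end{itemize}
Hence $\Phi$ is an isomorphism of categories, so $\nv{\commacat{d}{F}} \cong \nv{\commacat{G d}{G F}}$, which is weakly contractible by hypothesis.

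There is no real obstacle here: the lemma holds for the purely formal reason that the fully faithful functor $G$ lets us transport objects and morphisms of $\commacat{d}{F}$ along it without losing or gaining anything. The only thing worth being careful about is verifying that $\Phi$ really is a bijection on hom-sets, which comes down to the observation that for $f, f' : d \to F c$ the relation $F h \circ f = f'$ in $\mathcal{D}$ is equivalent (by faithfulness of $G$) to $G F h \circ G f = G f'$ in $\mathcal{E}$.
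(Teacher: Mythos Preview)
Your proof is correct and is exactly the argument the paper gives: the paper simply asserts the isomorphism $\commacat{d}{F} \cong \commacat{G d}{G F}$ coming from full faithfulness of $G$, whereas you spell out the functor $\Phi$ and check it is bijective on objects and morphisms.
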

\begin{proof}
Let $d$ be any object in $\mathcal{D}$. If $G : \mathcal{D} \to \mathcal{E}$ is fully faithful, then there is an isomorphism $\commacat{d}{F} \cong \commacat{G \argp{d}}{G F}$, so $F : \mathcal{C} \to \mathcal{D}$ is homotopy cofinal when $G F : \mathcal{C} \to \mathcal{E}$ is.
\end{proof}

\begin{thm}[Quillen's Theorem A]
\label{thm:Quillen-A}
Homotopy cofinal functors are weak homotopy equivalences of categories.
\end{thm}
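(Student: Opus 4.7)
The plan is to apply the classical two-sided bar construction argument, factoring $\nv{F}$ through a Grothendieck construction. Let $\mathcal{E}$ denote the Grothendieck construction of the contravariant functor $\commacat{\blank}{F} : \op{\mathcal{D}} \to \cat{\Cat}$: explicitly, objects of $\mathcal{E}$ are triples $(d, c, \phi : d \to F(c))$, and morphisms $(d, c, \phi) \to (d', c', \phi')$ are pairs $(g : d \to d', f : c \to c')$ satisfying $F(f) \circ \phi = \phi' \circ g$. The obvious projections give $P : \mathcal{E} \to \mathcal{D}$ (a Grothendieck fibration whose strict fibre over $d$ is $\commacat{d}{F}$) and $Q : \mathcal{E} \to \mathcal{C}$, together with a section $\Gamma : \mathcal{C} \to \mathcal{E}$, $c \mapsto (F(c), c, \id_{F(c)})$, for which $Q \circ \Gamma = \id_\mathcal{C}$ and $P \circ \Gamma = F$. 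The goal thus reduces to showing that $\nv{\Gamma}$ and $\nv{P}$ are weak equivalences of simplicial sets.

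A direct calculation exhibits $\Gamma$ as a right adjoint to $Q$ with unit $\eta_{(d, c, \phi)} = (\phi, \id_c) : (d, c, \phi) \to (F(c), c, \id_{F(c)})$; the universal property is immediate from the defining relation $F(f) \phi = \phi' g$ for morphisms in $\mathcal{E}$. Since adjunctions induce simplicial homotopy equivalences of nerves, $\nv{\Gamma}$ is a weak equivalence.

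The main step is $\nv{P}$. Form the Bousfield--Kan bisimplicial set
\[
B_{p, q} = \coprod_{(d_0 \to \cdots \to d_p) \in \nv{\mathcal{D}}_p} \nv{\commacat{d_p}{F}}_q,
\]
i.e.\ the two-sided bar construction $\Barcx[\bullet]{\nv{\commacat{\blank}{F}}}{\op{\mathcal{D}}}{\Delta 1}$ in the notation of the preceding definition. For each fixed $p$, the row-wise projection $B_{p, \bullet} \to \nv{\mathcal{D}}_p$ (as constant simplicial set) is summand-wise the collapse $\nv{\commacat{d_p}{F}} \to \Delta^0$, which is a weak equivalence by the homotopy cofinality hypothesis. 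The realisation lemma for bisimplicial sets then yields a weak equivalence of diagonals $\operatorname{diag}(B) \to \nv{\mathcal{D}}$. On the other hand, there is a natural simplicial map $\operatorname{diag}(B) \to \nv{\mathcal{E}}$ sending an $n$-simplex $(d_0 \to \cdots \to d_n, c_0 \to \cdots \to c_n, \phi : d_n \to F(c_0))$ to the chain $((d_i, c_i, \phi_i))_i$ in $\mathcal{E}$, with $\phi_i$ the composite $d_i \to d_n \xrightarrow{\phi} F(c_0) \to F(c_i)$; chasing definitions shows that $\operatorname{diag}(B) \to \nv{\mathcal{D}}$ agrees with $\nv{P}$ precomposed with this map. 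An application of Thomason's theorem on the Grothendieck construction identifies $\operatorname{diag}(B) \to \nv{\mathcal{E}}$ as a weak equivalence, so $\nv{P}$ is one as well, and $\nv{F} = \nv{P} \circ \nv{\Gamma}$ is a weak equivalence.

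The hard part is the Thomason identification $\operatorname{diag}(B) \simeq \nv{\mathcal{E}}$: establishing it intrinsically requires a further bisimplicial reduction (or a model-categorical input), and is essentially the technical core of the theorem. The other main ingredient, namely the realisation lemma for bisimplicial sets (row-wise weak equivalence implies diagonal weak equivalence), is taken as a black box; everything else in the argument is formal.
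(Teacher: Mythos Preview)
The paper does not prove this statement; it simply cites \S 1 of Quillen's 1973 paper and marks the proof as deferred. Your argument, by contrast, is a complete and correct proof along classical lines: factor through the comma category $\mathcal{E} = \commacat{\mathcal{D}}{F}$, handle the leg $\nv{\Gamma}$ via the adjunction $Q \dashv \Gamma$, and handle the leg $\nv{P}$ by combining the realisation lemma (row-wise contractibility of the bar construction, from the cofinality hypothesis) with Thomason's identification of $\operatorname{diag} B$ with $\nv{\mathcal{E}}$. This is essentially Quillen's own proof repackaged in the bar-construction language.

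Two minor remarks. First, you invoke Thomason's theorem, which in this paper is stated \emph{after} Theorem~A; this is not a genuine circularity, since both results are cited externally and Thomason's original argument is an independent bisimplicial computation, but it is worth saying explicitly that Thomason's proof does not presuppose Theorem~A. Second, in the paper's conventions the bar construction for a contravariant diagram $X : \op{\mathcal{D}} \to \cat{\SSet}$ is written $\Barcx[\bullet]{X}{\mathcal{D}}{\Delta 1}$, with $\mathcal{D}$ rather than $\op{\mathcal{D}}$ in the middle slot; your displayed formula for $B_{p,q}$ is correct, but the accompanying bar-notation has the variance reversed.
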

\begin{proof} \openproof
See \citep[\Sect 1]{Quillen:1973a}.
\end{proof}

\begin{lem}
\label{lem:pullback.of.left.adjoints.along.Grothendieck.fibrations}
Consider a pullback diagram in $\cat{\Cat}$:
\[
\begin{tikzcd}
\mathcal{E}' \dar[swap]{P'} \rar{L} &
\mathcal{E} \dar{P} \\
\mathcal{B}' \rar[swap]{F} &
\mathcal{B}
\end{tikzcd}
\]
If $P : \mathcal{E} \to \mathcal{B}$ is a Grothendieck fibration and $F : \mathcal{B}' \to \mathcal{B}$ has a right adjoint, then $L : \mathcal{E}' \to \mathcal{E}$ also has a right adjoint.
\end{lem}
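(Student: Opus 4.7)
The plan is to construct the right adjoint $R \colon \mathcal{E} \to \mathcal{E}'$ explicitly using cartesian lifts supplied by the Grothendieck fibration $P$. Write $G \colon \mathcal{B} \to \mathcal{B}'$ for the right adjoint of $F$, with counit $\epsilon \colon FG \to \id_{\mathcal{B}}$.

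Given an object $e$ in $\mathcal{E}$, set $b = P(e)$ and choose a cartesian lift $\tilde{\epsilon}_e \colon \hat{e} \to e$ of $\epsilon_b \colon FG(b) \to b$; this exists because $P$ is a Grothendieck fibration, and $P(\hat{e}) = FG(b)$. Then the pair $\tuple{G(b), \hat{e}}$ satisfies the defining condition of the pullback $\mathcal{E}' = \mathcal{B}' \times_{\mathcal{B}} \mathcal{E}$, so it determines an object that I will call $R(e)$. By construction $L(R(e)) = \hat{e}$, so $\tilde{\epsilon}_e$ supplies a candidate counit $\varepsilon_e \colon L(R(e)) \to e$.

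To verify the universal property, take an arbitrary object $e'$ of $\mathcal{E}'$, which corresponds to a pair $\tuple{b', e_1}$ with $F(b') = P(e_1)$ and $L(e') = e_1$. A morphism $e' \to R(e)$ in $\mathcal{E}'$ is precisely a pair $\tuple{\phi, \psi}$ with $\phi \colon b' \to G(b)$ in $\mathcal{B}'$, $\psi \colon e_1 \to \hat{e}$ in $\mathcal{E}$, and $F(\phi) = P(\psi)$. Given any $\chi \colon L(e') = e_1 \to e$ in $\mathcal{E}$, the adjunction $F \dashv G$ converts $P(\chi) \colon F(b') \to b$ into a unique $\phi \colon b' \to G(b)$ satisfying $\epsilon_b \circ F(\phi) = P(\chi)$, and then the universal property of the cartesian morphism $\tilde{\epsilon}_e$ produces a unique $\psi \colon e_1 \to \hat{e}$ with $P(\psi) = F(\phi)$ and $\tilde{\epsilon}_e \circ \psi = \chi$. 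This yields the desired bijection
\[
\Hom[\mathcal{E}']{e'}{R(e)} \cong \Hom[\mathcal{E}]{L(e')}{e},
\]
natural in $e'$.

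Functoriality of $R$ and naturality in $e$ are then forced by the universal property: given $f \colon e \to e^{\dagger}$ in $\mathcal{E}$, apply the bijection above to $f \circ \varepsilon_e \colon L(R(e)) \to e^{\dagger}$ to obtain a unique $R(f) \colon R(e) \to R(e^{\dagger})$ making $\varepsilon$ natural, and the standard argument shows $R$ preserves identities and composition. The only genuinely non-routine step is the first one \dash combining the adjunction $F \dashv G$ at the level of $\mathcal{B}$ with the cartesian lifting property at the level of $\mathcal{E}$ \dash but once the candidate $R(e) = \tuple{G(P(e)), \hat{e}}$ is in hand, everything else is formal.
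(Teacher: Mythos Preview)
Your proof is correct and follows essentially the same approach as the paper: define $R(e)$ using a cartesian lift of the counit $\epsilon_{P(e)}$, then verify the hom-set bijection by combining the adjunction $F \dashv G$ with the universal property of that cartesian lift. The only difference is presentational \dash the paper packages your element-wise bijection into a rectangle of pullback squares over the adjunction bijection $\Hom[\mathcal{B}']{b'_0}{G(b_1)} \cong \Hom[\mathcal{B}]{F(b'_0)}{b_1}$ \dash but the content is the same.
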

\begin{proof}
Let $G : \mathcal{B} \to \mathcal{B}'$ be any right adjoint for $F : \mathcal{B}' \to \mathcal{B}$, let $e_1$ be an object in $\mathcal{E}$, let $e_0'$ be an object in $\mathcal{E}'$, let $b_1 = e_1$, let $b'_0 = P' \argp{e'_0}$, choose a cartesian morphism $\tilde{\epsilon}_{e_1} : \parens{\epsilon_{b_1}}^* e_1 \to e_1$ in $\mathcal{E}$ such that $P \argp{\tilde{\epsilon}_{e_1}} = \epsilon_{b_1}$, where $\epsilon_{b_1} : F \argp{G \argp{b_1}} \to b_1$ is the counit component, and let $R \argp{e_1}$ be the unique object in $\mathcal{E}'$ such that $P' \argp{R \argp{e_1}} = G b_1$ and $L \argp{R \argp{e_1}} = \parens{\epsilon_{b_1}}^* e_1$. We then have the following commutative diagram,
\[
\begin{tikzcd}[column sep=9.0ex]
\Hom[\mathcal{E}']{e'_0}{R \argp{e_1}} \dar[swap]{P'} \rar{L} &
\Hom[\mathcal{E}]{L \argp{e'_0}}{L \argp{R \argp{e_1}}} \dar{P} \rar{\Hom[\mathcal{E}]{L \argp{e'_0}}{\tilde{\epsilon}_{e_1}}} &
\Hom[\mathcal{E}]{L \argp{e'_0}}{e_1} \dar{P} \\
\Hom[\mathcal{B}']{b'_0}{G \argp{b_1}} \rar[swap]{F} &
\Hom[\mathcal{B}]{F \argp{b'_0}}{F \argp{G \argp{b_1}}} \rar[swap]{\Hom[\mathcal{B}]{F \argp{b'_0}}{\epsilon_{b_1}}} &
\Hom[\mathcal{B}]{F \argp{b'_0}}{b_1}
\end{tikzcd}
\]
where both squares and the outer rectangle are pullback diagrams; but the composite of the bottom row is a bijection, so the composite of the top row is also a bijection. Thus, $L : \mathcal{E}' \to \mathcal{E}$ indeed has a right adjoint.
\end{proof}

\begin{lem}
\label{lem:tensors.and.homotopy.cofinality}
Let $\ul{X} : \op{\ul{\mathcal{C}}} \to \ul{\cat{\SSet}}$ be a small simplicially enriched diagram. If $\ul{\mathcal{C}}$ has cotensor products $\Delta^m \cotens c$ for every standard simplex $\Delta^m$ and every object $c$ in $\mathcal{C}$, then (regarding the underlying category $\mathcal{C}$ as a simplicially enriched category with discrete hom-spaces) the canonical comparison morphism
\[
\textstyle \hocolim[\op{\mathcal{C}}] X \to \hocolim[\op{\ul{\mathcal{C}}}] \ul{X}
\]
is a weak homotopy equivalence.
\end{lem}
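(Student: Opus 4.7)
The plan is to invoke the realization lemma for bisimplicial sets: a morphism of bisimplicial sets that is a levelwise weak homotopy equivalence in either simplicial direction induces a weak homotopy equivalence on diagonals. Unwinding definitions, the canonical comparison morphism is the diagonal of an evident morphism of bisimplicial sets from $\Barcx[\bullet]{X}{\mathcal{C}}{\Delta 1}$ to $\Barcx[\bullet]{\ul{X}}{\ul{\mathcal{C}}}{\Delta 1}$, obtained by including the discrete hom-sets of $\mathcal{C}$ into the $m$-simplex sets of the enriched hom-spaces of $\ul{\mathcal{C}}$. It therefore suffices to show that, for each fixed $m$, the induced map of simplicial sets in the bar direction is a weak homotopy equivalence.

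Fix $m$. Let $\mathcal{C}_m$ denote the ordinary category having the same objects as $\mathcal{C}$ and hom-sets $\mathcal{C}_m(c, c') = \ulHom(c, c')_m$, with composition inherited from the simplicial enrichment. Because $\ul{X}$ is simplicially enriched, evaluation on $m$-simplices promotes $X_m := X(\blank)_m$ to a set-valued functor $\op{\mathcal{C}_m} \to \cat{\Set}$ which restricts along the identity-on-objects inclusion $i : \mathcal{C} \to \mathcal{C}_m$ to the ordinary functor $X_m$ on $\op{\mathcal{C}}$. Unwinding the definition of the bar construction, the $m$-th level of $\Barcx[\bullet]{\ul{X}}{\ul{\mathcal{C}}}{\Delta 1}$ is the nerve of the category of elements $\mathcal{D}$ of $X_m$ on $\mathcal{C}_m$, the $m$-th level of $\Barcx[\bullet]{X}{\mathcal{C}}{\Delta 1}$ is the nerve of the category of elements $\mathcal{D}'$ of $X_m$ on $\mathcal{C}$, and the comparison is induced by a functor $L : \mathcal{D}' \to \mathcal{D}$.

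By the defining universal property of the cotensor, $\mathcal{C}_m(c, c') \cong \Hom_{\mathcal{C}}(c, \Delta^m \cotens c')$ naturally in $c$, which exhibits $\Delta^m \cotens \blank$ as a right adjoint to $i : \mathcal{C} \to \mathcal{C}_m$. The square
\[
\begin{tikzcd}
\mathcal{D}' \dar \rar{L} & \mathcal{D} \dar \\
\mathcal{C} \rar[swap]{i} & \mathcal{C}_m
\end{tikzcd}
\]
with vertical forgetful projections is a pullback in $\cat{\Cat}$, and its right-hand vertical is a discrete Grothendieck fibration. Lemma \ref{lem:pullback.of.left.adjoints.along.Grothendieck.fibrations} therefore produces a right adjoint to $L$, and since adjoint functors induce homotopy equivalences on nerves, the map on nerves is a weak homotopy equivalence, as required.

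The main piece of bookkeeping will be verifying that the above square is genuinely a pullback in $\cat{\Cat}$: this reduces to the fact that the simplicial action of $\ul{X}$ on a degenerate $m$-simplex $i(f)$ of an enriched hom-space coincides with the ordinary action of $X_m$ on the underlying morphism $f$, which is immediate because simplicially enriched functors respect degeneracies. Modulo this verification and the realization lemma for bisimplicial sets, the rest of the argument is formal, relying only on the cotensor adjunction and the cited structural lemma about pullbacks of adjunctions along Grothendieck fibrations.
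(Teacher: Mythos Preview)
Your proposal is correct and follows essentially the same approach as the paper: transpose the bisimplicial sets, fix the simplicial coordinate $m$, identify the resulting map of simplicial sets as the nerve of a functor between categories of elements over $\mathcal{C}$ and $\mathcal{C}_m$, use the cotensor to produce a right adjoint to the iterated degeneracy $\mathcal{C} \to \mathcal{C}_m$, apply \autoref{lem:pullback.of.left.adjoints.along.Grothendieck.fibrations} to lift the adjunction to the categories of elements, and conclude via the realization lemma. The paper leaves the category-of-elements identification and the pullback square implicit, whereas you spell them out, but the argument is the same.
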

\begin{proof}
Let $Y_{\bullet}$ and $Y'_{\bullet}$ be the transposes of the bisimplicial sets $\Barcx[\bullet]{\ul{X}}{\ul{\mathcal{C}}}{\Delta 1}$ and $\Barcx[\bullet]{X}{\mathcal{C}}{\Delta 1}$, respectively, and for each natural number $m$, let $\mathcal{C}_m$ be the $m$-th level of the simplicial category corresponding to $\ul{\mathcal{C}}$. (Note that $\mathcal{C} = \mathcal{C}_0$.) Then,
\begin{align*}
Y_{m, n} & = \coprod_{\tuple{c_0, \ldots, c_n}} X \argp{c_n}_m \times \Hom[\mathcal{C}_m]{c_{n-1}}{c_n} \times \cdots \times \Hom[\mathcal{C}_m]{c_0}{c_1} \\
Y'_{m, n} & = \coprod_{\tuple{c_0, \ldots, c_n}} X \argp{c_n}_m \times \Hom[\mathcal{C}_0]{c_{n-1}}{c_n} \times \cdots \times \Hom[\mathcal{C}_0]{c_0}{c_1}
\end{align*}
and the canonical comparison morphism $\hocolim[\mathcal{C}] X \to \hocolim[\ul{\mathcal{C}}] \ul{X}$ is simply the diagonal of the bisimplicial set morphism $Y'_{\bullet} \to Y_{\bullet}$ defined in degree $m$ by the $m$-fold iterated degeneracy $\mathcal{C}_0 \to \mathcal{C}_m$. But, for any $c$ and $c'$ in $\mathcal{C}$,
\[
\Hom[\mathcal{C}_0]{c'}{\Delta^m \cotens c} \cong \Hom[\mathcal{C}_m]{c'}{c}
\]
so the $m$-fold iterated degeneracy $\mathcal{C}_0 \to \mathcal{C}_m$ has a right adjoint. It follows by \autoref{lem:pullback.of.left.adjoints.along.Grothendieck.fibrations} that the morphisms $Y'_m \to Y_m$ are nerves of left adjoint functors and hence are (simplicial) homotopy equivalences \emph{a fortiori}. Thus, by the homotopy invariance of diagonals,\footnote{See \eg Theorem 15.11.11 in \citep{Hirschhorn:2003}.} the induced morphism $\hocolim[\mathcal{C}] X \to \hocolim[\ul{\mathcal{C}}] \ul{X}$ is a weak homotopy equivalence.
\end{proof}

We will also need the following version of the Grothendieck construction:

\begin{dfn}
Let $\mathcal{X} : \op{\mathcal{C}} \to \cat{\Cat}$ be a small diagram. The \strong{oplax colimit} for $\mathcal{X}$ is the category $\oplaxcolim[\op{\mathcal{C}}] \mathcal{X}$ defined below:
\begin{itemize}
\item The objects are pairs $\tuple{c, x}$ where $c$ is an object in $\mathcal{C}$ and $x$ is an object in $\mathcal{X} \argp{c}$.

\item The morphisms $\tuple{c', x'} \to \tuple{c, x}$ are pairs $\tuple{f, g}$ where $f : c' \to c$ is a morphism in $\mathcal{C}$ and $g : x' \to \mathcal{X} \argp{f} \argp{x}$ is a morphism in $\mathcal{X} \argp{c'}$.

\item Composition and identities are inherited from $\mathcal{C}$ and $\mathcal{X}$.
\end{itemize}
\end{dfn}

\begin{example}
Let $X : \op{\mathcal{C}} \to \cat{\Set}$ be a small diagram. Regarding $X$ as a diagram $\op{\mathcal{C}} \to \cat{\Cat}$, it is not hard to see that $\oplaxcolim[\mathcal{C}] X$ is the category of elements of $X$, \ie $\op{\commacat{1}{X}}$.
\end{example}

\begin{thm}[Thomason's homotopy colimit theorem]
\label{thm:Thomason.hocolim}
Let $\mathcal{X} : \op{\mathcal{C}} \to \cat{\Cat}$ be a small diagram. There is a weak homotopy equivalence
\[
\textstyle \hocolim[\op{\mathcal{C}}] \nv \circ \mathcal{X} \to \nv{\oplaxcolim[\op{\mathcal{C}}] \mathcal{X}}
\]
which is moreover natural in $\mathcal{C}$ and $\mathcal{X}$.
\end{thm}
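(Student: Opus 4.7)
The proof divides naturally into two parts: the construction of the comparison and the proof that it is a weak equivalence.

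A diagonal $n$-simplex of $\hocolim[\op{\mathcal{C}}] \nv \circ \mathcal{X}$ consists of a chain $c_0 \xrightarrow{f_1} c_1 \to \cdots \xrightarrow{f_n} c_n$ in $\mathcal{C}$ together with an $n$-simplex $y_0 \to \cdots \to y_n$ of $\nv \mathcal{X}(c_n)$. I would send such a simplex to the $n$-chain $(c_0, x_0) \to \cdots \to (c_n, x_n)$ in $\oplaxcolim[\op{\mathcal{C}}] \mathcal{X}$, where $x_i = \mathcal{X}(f_{i+1}) \cdots \mathcal{X}(f_n)(y_i)$ (so $x_n = y_n$) and the $i$-th edge is $(f_i, g_i)$ with $g_i = \mathcal{X}(f_i) \cdots \mathcal{X}(f_n)(y_{i-1} \to y_i)$. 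Compatibility with the face and degeneracy operators is a routine check using the functoriality of $\mathcal{X}$, and naturality in $\mathcal{C}$ and $\mathcal{X}$ is immediate from the explicit formula.

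To show this is a weak equivalence, I would follow the bisimplicial strategy of Thomason. The projection $P \colon \oplaxcolim[\op{\mathcal{C}}] \mathcal{X} \to \mathcal{C}$ given by $(c, x) \mapsto c$ is a Grothendieck fibration whose strict fibers are the categories $\mathcal{X}(c)$. Using $P$, the plan is to construct an auxiliary bisimplicial set $Z_{\bullet,\bullet}$ whose diagonal is $\nv(\oplaxcolim \mathcal{X})$, arranged so that the above comparison extends to a bisimplicial morphism $\Barcx[\bullet]{\nv \circ \mathcal{X}}{\mathcal{C}}{\Delta 1} \to Z_{\bullet,\bullet}$. For each fixed column index, the induced map of rows can be shown to be a weak equivalence by exhibiting the relevant comparison functor as the nerve of a left adjoint, via an application of \autoref{lem:pullback.of.left.adjoints.along.Grothendieck.fibrations} in the same spirit as the proof of \autoref{lem:tensors.and.homotopy.cofinality}. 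The homotopy invariance of diagonals of bisimplicial sets (cited there) then implies that the diagonal map is a weak equivalence.

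The principal obstacle is designing $Z_{\bullet,\bullet}$ correctly: one needs a bisimplicial decomposition of $\nv(\oplaxcolim \mathcal{X})$ compatible with $P$ under which the row-wise comparisons genuinely reduce to comparisons between nerves of left adjoints, which requires careful bookkeeping of the lax coherence data of $\oplaxcolim \mathcal{X}$. An alternative route is to apply \autoref{thm:Quillen-A} directly: identify both sides with nerves of appropriate categories of simplices, view the comparison as the nerve of an inclusion, and show that this inclusion is homotopy cofinal by analysing the relevant comma categories with the help of \autoref{lem:Grothendieck.fibrations.and.homotopy.cofinality}. In either approach, the crux is that Grothendieck transport preserves the weak homotopy type of fibers.
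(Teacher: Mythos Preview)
The paper does not actually prove this theorem: the proof environment contains only a citation to Thomason's original 1979 paper, and the open-square QED symbol (via \verb|\openproof|) signals that the argument is deferred to the literature. So there is no ``paper's own proof'' to compare your attempt against beyond that reference.

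That said, your sketch is broadly on the right track and goes further than the paper does. The explicit description of the comparison map is correct, and the bisimplicial strategy you outline is indeed essentially Thomason's. However, what you have written is a plan rather than a proof, and you yourself identify the crux: constructing the auxiliary bisimplicial set $Z_{\bullet,\bullet}$ and verifying the row-wise weak equivalences. One caution: the row-wise comparisons in Thomason's argument are not obtained by exhibiting left adjoints via \autoref{lem:pullback.of.left.adjoints.along.Grothendieck.fibrations} in the way you suggest; rather, Thomason works with an explicit bisimplicial resolution of $\nv{\oplaxcolim[\op{\mathcal{C}}] \mathcal{X}}$ and shows the relevant categories have initial (or terminal) objects, whence contractibility. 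Your appeal to \autoref{lem:pullback.of.left.adjoints.along.Grothendieck.fibrations} is more analogy than argument here and would need to be replaced by the concrete construction if you were to write out a self-contained proof. The alternative route via \autoref{thm:Quillen-A} that you mention is also viable in principle, but again the real content is in the verification of homotopy cofinality, which you have not carried out.
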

\begin{proof} \openproof
See \citep{Thomason:1979}.
\end{proof}

\section{Zigzags in relative categories}

Recall the following definitions from \citep{Barwick-Kan:2012a}:

\begin{dfn}
\ \noprelistbreak
\begin{itemize}
\item A \strong{relative category} is a pair $\mathcal{C} = \tuple{\und \mathcal{C}, \weq \mathcal{C}}$ where $\und \mathcal{C}$ is a category and $\weq \mathcal{C}$ is a (usually non-full) subcategory of $\und \mathcal{C}$ containing all the objects.

\item Given a relative category $\mathcal{C}$, a \strong{weak equivalence} in $\mathcal{C}$ is a morphism in $\weq \mathcal{C}$.

\item The \strong{homotopy category} of a relative category $\mathcal{C}$ is the category $\Ho \mathcal{C}$ obtained by freely inverting the weak equivalences in $\mathcal{C}$.

\item Given relative categories $\mathcal{C}$ and $\mathcal{D}$, a \strong{relative functor} $\mathcal{C} \to \mathcal{D}$ is a functor $\und \mathcal{C} \to \und \mathcal{D}$ that restricts to a functor $\weq \mathcal{C} \to \weq \mathcal{D}$, and the \strong{relative functor category} $\RelFun{\mathcal{C}}{\mathcal{D}}$ is the relative category whose underlying category is the full subcategory of the ordinary functor category $\Func{\und \mathcal{C}}{\und \mathcal{D}}$ spanned by the relative functors, with the weak equivalences being the natural transformations whose components are weak equivalences in $\mathcal{D}$.
\end{itemize}
\end{dfn}

\begin{remark}
The 2-category of (small) categories admits several 2-fully faithful embeddings into the 2-category of (small) relative categories; unless otherwise stated, we will regard an ordinary category as \strong{minimal relative category} where the only weak equivalences are the identity morphisms. In particular, given an ordinary category $\mathcal{C}$ and a relative category $\mathcal{D}$, we will often tacitly identify the ordinary functor category $\Func{\mathcal{C}}{\mathcal{D}}$ with the relative functor category $\RelFun{\mathcal{C}}{\mathcal{D}}$.
\end{remark}

\begin{dfn}
\ \noprelistbreak
\begin{itemize}
\item A \strong{zigzag type} is a finite sequence of non-zero integers $\tuple{k_0, \ldots, k_n}$, where $n \ge 0$, such that for $0 \le i < n$, the sign of $k_i$ is the opposite of the sign of $k_{i+1}$.

\item Given a finite sequence of integers $\seq{k_0, \ldots, k_n}$, $\bracket{k_0 ; \ldots ; k_n}$ is the relative category whose underlying category is freely generated by the graph
\[
\begin{tikzcd}
0 \rar[-] &
\cdots \rar[-] &
\abs{k_0} + \cdots + \abs{k_n}
\end{tikzcd}
\]
where (counting from the left) the first $\abs{k_0}$ arrows point rightward (\resp leftward) if $k_0 > 0$ (\resp $k_0 < 0$), the next $\abs{k_2}$ arrows point rightward (\resp leftward) if $k_1 > 0$ (\resp $k_1 < 0$), \etc, with the weak equivalences being generated by the leftward-pointing arrows.

\item A \strong{zigzag} in a relative category $\mathcal{C}$ of type $\bracket{k_0 ; \ldots ; k_n}$ is a relative functor $\bracket{k_0 ; \ldots ; k_n} \to \mathcal{C}$; given a zigzag, its \strong{length} is $m = \abs{k_0} + \cdots + \abs{k_n}$, its \strong{domain} is the image of the object $0$, and its \strong{codomain} is the image of the object $m$.
\end{itemize}
\end{dfn}

\begin{example}
For example, $\bracket{-1 ; 2}$ denotes the relative category generated by the following graph,
\[
\begin{tikzcd}
0 \rar[leftarrow]{\simeq} &
1 \rar &
2 \rar &
3
\end{tikzcd}
\]
with $1 \to 0$ being the unique non-trivial weak equivalence.
\end{example}

\begin{remark}
For any $\bracket{k_0 ; \ldots ; k_n}$, if $\abs{k_0} + \cdots + \abs{k_n} > 0$, then there is a unique zigzag type $\seq{l_0, \ldots, l_m}$ such that $\bracket{k_0 ; \ldots ; k_n} = \bracket{l_0 ; \ldots ; l_m}$. However, it is convenient to allow unnormalised notation.
\end{remark}

\begin{dfn}
Let $X$ and $Y$ be objects in a relative category $\mathcal{C}$ and let $\tuple{k_0, \ldots, k_n}$ be a finite sequence of integers. The \strong{category of zigzags} in $\mathcal{C}$ from $X$ to $Y$ of type $\tuple{k_0 ; \ldots ; k_n}$ is the category $\mathcal{C}^{\bracket{k_0 ; \ldots ; k_n}} \argp{X, Y}$ defined below:
\begin{itemize}
\item The objects are the zigzags in $\mathcal{C}$ of type $\bracket{k_0 ; \ldots ; k_n}$  whose domain is $X$ and whose codomain is $Y$.

\item The morphisms are commutative diagrams in $\mathcal{C}$ of the form
\[
\begin{tikzcd}
X \dar[equals] \rar[-] &
\bullet \dar[swap]{\simeq} \rar[-] &
\cdots \rar[-] &
\bullet \dar{\simeq} \rar[-] &
Y \dar[equals] \\
X \rar[-] &
\bullet \rar[-] &
\cdots \rar[-] &
\bullet \rar[-] &
Y
\end{tikzcd}
\]
where the top row is the domain, the bottom row is the codomain, and the vertical arrows are weak equivalences in $\mathcal{C}$.

\item Composition and identities are inherited from $\mathcal{C}$.
\end{itemize}
For brevity, we write $\mathcal{C}^{\bracket{k_0 ; \ldots ; k_n}}$ instead of $\weq \RelFun{\bracket{k_0 ; \ldots ; k_n}}{\mathcal{C}}$.
\end{dfn}

\begin{remark*}
In other words, the morphisms in $\mathcal{C}^{\bracket{k_0 ; \ldots ; k_n}} \argp{X, Y}$ are certain hammocks of width $1$, in the sense of \citet{Dwyer-Kan:1980b}.
\end{remark*}

For brevity, let us say that a \strong{weak homotopy equivalence of categories} is a functor $F : \mathcal{C} \to \mathcal{D}$ such that $\nv{F} : \nv{\mathcal{C}} \to \nv{\mathcal{D}}$ (\ie the induced morphism of nerves) is a weak homotopy equivalence of simplicial sets. The following is a variation on the homotopy calculus of right fractions introduced by \citet{Dwyer-Kan:1980b}. 

\begin{dfn}
\needspace{3\baselineskip}
A relative category $\mathcal{C}$ admits a \strong{homotopical calculus of right fractions} if it satisfies the following condition:
\begin{itemize}
\item For all natural numbers $k$ and $l$ and all objects $X$ and $Y$ in $\mathcal{C}$, the evident functor
\[
\mathcal{C}^{\bracket{-1 ; k ; l}} \argp{X, Y} \to \mathcal{C}^{\bracket{-1 ; k ; -1 ; l}} \argp{X, Y}
\]
defined by inserting an identity morphism is a weak homotopy equivalence of categories.
\end{itemize}
\end{dfn}

\begin{remark}
Let $\mathcal{C}$ be a relative category and let $\mathcal{W}$ be $\weq \mathcal{C}$ considered as a relative category where all morphisms are weak equivalences. Then the following are equivalent:
\begin{enumerate}[(i)]
\item $\mathcal{C}$ admits a homotopy calculus of right fractions in the sense of \citet{Dwyer-Kan:1980b}.

\item Both $\mathcal{C}$ and $\mathcal{W}$ admit a homotopical calculus of right fractions in the sense of the above definition.
\end{enumerate}
Moreover, if the weak equivalences in $\mathcal{C}$ have the 2-out-of-3 property, then $\mathcal{W}$ admits a homotopical calculus of right fractions if $\mathcal{C}$ does. 
\end{remark}

\begin{remark}
If a relative category $\mathcal{C}$ admits a homotopical calculus of right fractions, then $\mathcal{C}$ also admits a homotopical three-arrow calculus. In particular, the results of \citep{Low-MazelGee:2015} apply, \ie any Reedy-fibrant replacement $\widehat{\Nv{\mathcal{C}}}$ of the Rezk classification diagram $\Nv{\mathcal{C}}$ is a Segal space, and $\widehat{\Nv{\mathcal{C}}}$ is a complete Segal space if $\mathcal{C}$ is a saturated homotopical category.
\end{remark}

\begin{thm}[Dwyer and Kan]
\label{thm:fundamental.theorem.of.homotopical.two-arrow.calculi}
\needspace{3\baselineskip}
Let $\mathcal{C}$ be a relative category and let $\ul{\LH \mathcal{C}}$ be the hammock localisation. 
\begin{enumerate}[(i)]
\item If $\mathcal{C}$ admits a homotopical calculus of right fractions, then the reduction morphism $\nv{\mathcal{C}^{\bracket{-1 ; 1}} \argp{X, Y}} \to \ulHom[\LH \mathcal{C}]{X}{Y}$ is a weak homotopy equivalence of simplicial sets.

\item The reduction morphism $\nv{\mathcal{C}^{\bracket{-1 ; 1}} \argp{X, Y}} \to \ulHom[\LH \mathcal{C}]{X}{Y}$ is natural in the following sense: given any weak equivalence $X \to X'$ and any morphism $Y \to Y'$ in $\mathcal{C}$, the following diagram commutes in $\cat{\SSet}$,
\[
\begin{tikzcd}
\nv{\mathcal{C}^{\bracket{-1 ; 1}} \argp{X, Y}} \dar \rar &
\ulHom[\LH \mathcal{C}]{X}{Y} \dar \\
\nv{\mathcal{C}^{\bracket{-1 ; 1}} \argp{X', Y'}} \rar &
\ulHom[\LH \mathcal{C}]{X'}{Y'}
\end{tikzcd}
\]
where the left vertical arrow is defined by composition and the right vertical arrow is defined by concatenation.
\end{enumerate}
\end{thm}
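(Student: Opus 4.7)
The proof I have in mind follows Dwyer and Kan's original argument, with our axiom of homotopical calculus of right fractions as the key input. By construction, an $n$-simplex of $\ulHom[\LH \mathcal{C}]{X}{Y}$ is a reduced hammock of height $n+1$ from $X$ to $Y$ of arbitrary zigzag type, and the reduction morphism sends a (possibly non-reduced) hammock of type $\bracket{-1;1}$ with $n+1$ rows\dash \ie an $n$-simplex of $\nv{\mathcal{C}^{\bracket{-1;1}}\argp{X,Y}}$\dash to its reduction.

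For part (i), the strategy is to identify $\ulHom[\LH \mathcal{C}]{X}{Y}$ with the colimit in $\cat{\SSet}$ of the nerves $\nv{\mathcal{C}^T\argp{X,Y}}$ indexed over zigzag types $T$ with identity-insertion morphisms (the reduction identifications being precisely the colimit identifications), and then to show that the canonical map from $\nv{\mathcal{C}^{\bracket{-1;1}}\argp{X,Y}}$ into this colimit is a weak homotopy equivalence. To obtain the latter, I would prove by induction on the length of $T$ that for every zigzag type $T$ obtainable from $\bracket{-1;1}$ by padding with identities, the corresponding functor $\mathcal{C}^{\bracket{-1;1}}\argp{X,Y} \to \mathcal{C}^{T}\argp{X,Y}$ is a weak homotopy equivalence, and that such types $T$ are cofinal among all zigzag types. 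The axiom directly handles the insertion of a backward identity between two forward blocks; the general inductive step combines the axiom with \autoref{thm:Quillen-A} applied to suitable comma constructions.

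The main obstacle is likely to be organising the induction so that the axiom\dash which only directly provides identity-insertions between consecutive forward columns in a type of the form $\bracket{-1;k;l}$\dash suffices to treat arbitrary identity insertions, and in particular, to reach zigzag types that do not begin with a backward arrow or end with a forward arrow. One approach is first to pre-compose and post-compose with suitable identity columns to reduce to the case where the type begins with $-1$ and ends with $1$, and then to iterate the axiom.

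Part (ii) is a direct unpacking of the definitions. The left vertical functor takes a zigzag $X \xleftarrow{\simeq} Z \to Y$ to $X' \xleftarrow{\simeq} Z \to Y'$ by composing the backward arrow with the weak equivalence $X \to X'$ (using composition-closure of $\weq \mathcal{C}$) and composing the forward arrow with $Y \to Y'$. The right vertical morphism in $\cat{\SSet}$ is defined by pre-concatenating a hammock with the weak equivalence $X' \xleftarrow{\simeq} X$ and post-concatenating with $Y \to Y'$. Agreement after reduction follows from the observation that two consecutive backward weak equivalences $X' \xleftarrow{\simeq} X \xleftarrow{\simeq} Z$ reduce to the single composite weak equivalence $X' \xleftarrow{\simeq} Z$, which is exactly what the left vertical functor produces.
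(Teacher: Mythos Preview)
The paper does not reprove this result: for part (i) it simply cites Proposition~6.2 of \citet{Dwyer-Kan:1980b}, adding only the observation that the second half of their ``homotopy calculus of right fractions'' condition is not invoked there, so the weaker axiom of this paper suffices; part (ii) is dismissed as ``Obvious''. Your proposal, by contrast, attempts to reconstruct the content of that citation.

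The overall shape of your sketch for (i) is correct: the hammock hom-space is the colimit of the $\nv{\mathcal{C}^{T}\argp{X,Y}}$ over the category of zigzag types, and one then argues that the component at $\bracket{-1;1}$ already computes the homotopy type of the colimit. The gap you yourself flag is genuine: the axiom as stated only directly gives the weak equivalences $\mathcal{C}^{\bracket{-1;k;l}}\argp{X,Y} \to \mathcal{C}^{\bracket{-1;k;-1;l}}\argp{X,Y}$, and your suggested inductive reduction via ``suitable comma constructions'' and Theorem~A is not spelled out enough to be convincing. Dwyer and Kan's own organisation is somewhat different: they show that \emph{every} structure morphism in the diagram of zigzag types becomes a weak equivalence under the hypothesis, and then use the contractibility of the indexing category of zigzag types to conclude that each term maps to the colimit by a weak equivalence. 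That is closer to a ``homotopy-constant diagram over a contractible base'' argument than to an induction from $\bracket{-1;1}$ outwards.

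Your treatment of part (ii) is correct and matches the paper's one-word proof.
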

\begin{proof} \openproof
(i). This is Proposition 6.2 in \citep{Dwyer-Kan:1980b}. Note that the second half of the `homotopy calculus of right fractions' condition is not used, so it does indeed suffice to have a homotopical calculus of right fractions.

\bigskip\noindent
(ii). Obvious.
\end{proof}

\begin{cor}
\label{cor:action.of.weak.equivalences.on.two-arrow.zigzags}
Let $\mathcal{C}$ be a relative category. If $\mathcal{C}$ admits a homotopical calculus of right fractions, then for any weak equivalences $X \to X'$ and $Y \to Y'$ in $\mathcal{C}$, the induced functor
\[
\mathcal{C}^{\bracket{-1 ; 1}} \argp{X, Y} \to \mathcal{C}^{\bracket{-1 ; 1}} \argp{X', Y'}
\]
is a weak homotopy equivalence of categories.
\end{cor}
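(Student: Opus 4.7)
The strategy is to reduce the assertion to the already-established comparison with the hammock localisation. By \autoref{thm:fundamental.theorem.of.homotopical.two-arrow.calculi}(ii), applied to the weak equivalences $X \to X'$ and $Y \to Y'$, we obtain a commutative square
\[
\begin{tikzcd}
\nv{\mathcal{C}^{\bracket{-1 ; 1}} \argp{X, Y}} \dar \rar &
\ulHom[\LH \mathcal{C}]{X}{Y} \dar \\
\nv{\mathcal{C}^{\bracket{-1 ; 1}} \argp{X', Y'}} \rar &
\ulHom[\LH \mathcal{C}]{X'}{Y'}
\end{tikzcd}
\]
in $\cat{\SSet}$, in which the horizontal arrows are weak homotopy equivalences by part (i) of the same theorem. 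The goal is then to check that the right vertical arrow is a weak homotopy equivalence, so that the result follows by the 2-out-of-3 property.

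To see that the right vertical arrow is a weak homotopy equivalence, I would appeal to the standard fact that weak equivalences in $\mathcal{C}$ become homotopy equivalences in the hammock localisation $\ul{\LH \mathcal{C}}$ (see Proposition 3.3 in \citep{Dwyer-Kan:1980b}). Consequently, the images of $X \to X'$ and $Y \to Y'$ in $\ul{\LH \mathcal{C}}$ admit homotopy inverses, and the composition law of the simplicial category $\ul{\LH \mathcal{C}}$ then ensures that concatenation with them induces a simplicial homotopy equivalence $\ulHom[\LH \mathcal{C}]{X}{Y} \to \ulHom[\LH \mathcal{C}]{X'}{Y'}$; this is the right vertical arrow.

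With the three other sides of the square established as weak homotopy equivalences, 2-out-of-3 forces the left vertical arrow $\nv{\mathcal{C}^{\bracket{-1 ; 1}} \argp{X, Y}} \to \nv{\mathcal{C}^{\bracket{-1 ; 1}} \argp{X', Y'}}$ to be a weak homotopy equivalence of simplicial sets as well, which is precisely the statement that the inducing functor is a weak homotopy equivalence of categories. No part of the argument seems particularly delicate; the only point that requires care is making sure the map ``defined by concatenation'' in part (ii) of the fundamental theorem really is the one obtained by post-composing with $Y \to Y'$ and pre-composing with a hammock-inverse of $X \to X'$ in $\ul{\LH \mathcal{C}}$, but this is clear from the construction of the hammock localisation.
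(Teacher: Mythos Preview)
Your proof is correct and follows exactly the approach indicated by the paper's terse proof: form the naturality square from \autoref{thm:fundamental.theorem.of.homotopical.two-arrow.calculi}(ii), use part (i) for the horizontal arrows, use Proposition~3.3 of \citep{Dwyer-Kan:1980b} for the right vertical arrow, and conclude by 2-out-of-3. You have simply made explicit what the paper leaves implicit.
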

\begin{proof} \openproof
Use naturality (as in \autoref{thm:fundamental.theorem.of.homotopical.two-arrow.calculi}) and Proposition 3.3 in \citep{Dwyer-Kan:1980b}.
\end{proof}

\section{The homotopical calculus of cocycles}

The following notion of `cocycle' is originally due to \citet{Jardine:2009}.

\begin{dfn}
\needspace{3\baselineskip}
Let $\mathcal{C}$ be a relative category and let $\mathcal{V}$ be a class of weak equivalences in $\mathcal{C}$. 
\begin{itemize}
\item Given objects $X$ and $Y$ in $\mathcal{C}$, a \strong{$\mathcal{V}$-cocycle} $\tuple{f, v} : X \profto Y$ in $\mathcal{C}$ is a diagram in $\mathcal{C}$ of the form below,
\[
\begin{tikzcd}
X \rar[leftarrow]{v} &
\tilde{X} \rar{f} &
Y
\end{tikzcd}
\]
where $v : \tilde{X} \to X$ is a morphism in $\mathcal{V}$. Given such, the \strong{domain} is $X$ and the \strong{codomain} is $Y$.

We write $\Cocyc[\mathcal{C}][\mathcal{V}]{X}{Y}$ for the full subcategory of $\mathcal{C}^{\bracket{-1 ; 1}} \argp{X, Y}$ spanned by the $\mathcal{V}$-cocycles. 

\item If $\mathcal{V} = \weq \mathcal{C}$, then we may simply say \strong{cocycle} instead of `$\mathcal{V}$-cocycle'.
\end{itemize}
\end{dfn}

\begin{remark}
In other words, a cocycle in $\mathcal{C}$ is a zigzag of type $\bracket{-1 ; 1}$.
\end{remark}
%
%

\makenumpar
Recall that a \strong{category with weak equivalences} is a relative category in which the weak equivalences have the 2-out-of-3 property and include all isomorphisms.
For the remainder of this section, $\mathcal{C}$ is a category with weak equivalences and $\mathcal{W} = \weq \mathcal{C}$.

Heuristically, a homotopical calculus of cocycles for $\mathcal{C}$ consists of three pieces of data: a class $\mathcal{V}$ of ``good'' weak equivalences in $\mathcal{C}$, a category of ``enhanced'' cocycles, and a forgetful functor from the category of ``enhanced'' cocycles to the category of cocycles in $\mathcal{C}$, such that:
\begin{itemize}
\item $\mathcal{V}$ is closed under pullback.

\item ``Enhanced'' cocycles can be pulled back along pairs of weak equivalences.

\item The underlying cocycle of an ``enhanced'' cocycle is a $\mathcal{V}$-cocycle.

\item Every cocycle can be replaced with an ``enhanced'' cocycle in a homotopically unique way.
\end{itemize}
More precisely, we make the following definition.

\begin{dfn}
A \strong{homotopical calculus of cocycles} for $\mathcal{C}$ consists of a class $\mathcal{V}$ of morphisms in $\mathcal{W}$, a category $\FCorr[\mathcal{C}]$, and a functor $U : \FCorr[\mathcal{C}] \to \ZzCat{\bracket{-1 ; 1}}{\mathcal{C}}$ satisfying the following conditions:
\begin{itemize}
\item $\mathcal{V}$ is closed under pullback in $\mathcal{C}$ in the sense that, for any morphism $v : X \to Y$ in $\mathcal{V}$ and any morphism $g : Y' \to Y$ in $\mathcal{C}$, there is a pullback diagram in $\mathcal{C}$ of the form below,
\[
\begin{tikzcd}
X' \dar[swap]{v'} \rar &
X \dar{v} \\
Y' \rar[swap]{g} &
Y
\end{tikzcd}
\]
and in any such pullback diagram, $v' : X' \to Y'$ is also in $\mathcal{V}$.

\item Every isomorphism in $\mathcal{C}$ is a member of $\mathcal{V}$.

\item The composite
\[
\begin{tikzcd}[column sep=9.0ex]
\FCorr[\mathcal{C}] \rar{U} &
\ZzCat{\bracket{-1 ; 1}}{\mathcal{C}} \rar{\prodtuple{\dom, \codom}} &
\mathcal{W} \times \mathcal{W}
\end{tikzcd}
\]
is a Grothendieck fibration, where $\dom$ (\resp $\codom$) is the evident functor $\ZzCat{\bracket{-1 ; 1}}{\mathcal{C}} \to \mathcal{W}$ sending a cocycle $X \profto Y$ to $X$ (\resp $Y$), and the functor $U : \FCorr[\mathcal{C}] \to \ZzCat{\bracket{-1 ; 1}}{\mathcal{C}}$ preserves cartesian morphisms.

\item For each object $E$ in $\FCorr[\mathcal{C}]$, $U E$ is a $\mathcal{V}$-cocycle in $\mathcal{C}$.

\item For each pair $\tuple{X, Y}$ of objects in $\mathcal{C}$, writing $\FCorr[\mathcal{C}]{X}{Y}$ for the strict fibre of the above functor $\FCorr[\mathcal{C}] \to \mathcal{W} \times \mathcal{W}$, the induced functor 
\[
U_{X, Y} : \FCorr[\mathcal{C}]{X}{Y} \to \Cocyc[\mathcal{C}]{X}{Y}
\]
is homotopy cofinal.
\end{itemize}
\end{dfn}

\begin{remark}
We do \emph{not} require $\prodtuple{\dom, \codom} : \ZzCat{\bracket{-1 ; 1}}{\mathcal{C}} \to \mathcal{W} \times \mathcal{W}$ to be a Grothendieck fibration. Nonetheless, it still makes sense to talk about cartesian morphisms in $\ZzCat{\bracket{-1 ; 1}}{\mathcal{C}}$. For example, consider a cocycle in $\mathcal{C}$,
\[
\begin{tikzcd}
X \rar[leftarrow]{v} &
Z \rar{f} &
Y
\end{tikzcd}
\]
where $v : Z \to X$ is in $\mathcal{V}$; then, for any weak equivalence $w : X' \to X$ in $\mathcal{C}$, we can form the following commutative diagram in $\mathcal{C}$,
\[
\begin{tikzcd}
X' \dar[swap]{w} \rar[leftarrow]{v'} &
Z' \dar \rar{f'} &
Y \dar[equals] \\
X \rar[leftarrow, swap]{v} &
Z \rar[swap]{f} &
Y
\end{tikzcd}
\]
where the left square is a pullback diagram in $\mathcal{C}$, and it is straightforward to verify that the corresponding morphism $\tuple{f', v'} \to \tuple{f, v}$ is a cartesian morphism in $\ZzCat{\bracket{-1 ; 1}}{\mathcal{C}}$.
\end{remark}

The primary example of a homotopical calculus of cocycles is the case where $\mathcal{C}$ is a category of fibrant objects, $\mathcal{V}$ is the subcategory of trivial fibrations in $\mathcal{C}$, $\FCorr[\mathcal{C}]$ is a certain full subcategory of $\ZzCat{\bracket{-1 ; 1}}{\mathcal{C}}$, and the functor $U : \FCorr[\mathcal{C}] \to \ZzCat{\bracket{-1 ; 1}}{\mathcal{C}}$ is the inclusion. The details of this are deferred to the following sections.

\makenumpar
For the remainder of this section, let $\mathcal{V} \subseteq \mathcal{W}$, $\FCorr[\mathcal{C}]$, and $U : \FCorr[\mathcal{C}] \to \ZzCat{\bracket{-1 ; 1}}{\mathcal{C}}$ be the data of a homotopical calculus of cocycles for $\mathcal{C}$. 

\begin{lem}
Let $\mathcal{D}$ be a full subcategory of $\mathcal{C}$ (regarded as a relative category with the same weak equivalences) and let $U : \FCorr[\mathcal{D}] \to \ZzCat{\bracket{-1 ; 1}}{\mathcal{D}}$ be defined by the following pullback diagram in $\cat{\Cat}$:
\[
\begin{tikzcd}
\FCorr[\mathcal{D}] \dar[hookrightarrow] \rar{U} &
\ZzCat{\bracket{-1 ; 1}}{\mathcal{D}} \dar[hookrightarrow] \\
\FCorr[\mathcal{C}] \rar[swap]{U} &
\ZzCat{\bracket{-1 ; 1}}{\mathcal{C}}
\end{tikzcd}
\]
If $\mathcal{D}$ is a homotopically replete in $\mathcal{C}$,\footnote{--- \ie for any weak equivalence $w : X \to Y$ in $\mathcal{C}$, if either $X$ or $Y$ is in $\mathcal{D}$, then $X$, $Y$, and $w : X \to Y$ are all in $\mathcal{D}$.} then $\mathcal{V} \cap \mor \mathcal{D}$, $\FCorr[\mathcal{D}]$, and $U : \FCorr[\mathcal{D}] \to \ZzCat{\bracket{-1 ; 1}}{\mathcal{D}}$ define a homotopical calculus of cocycles in $\mathcal{D}$.
\end{lem}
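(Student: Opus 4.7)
The plan is to verify each of the four axioms of a homotopical calculus of cocycles for $\mathcal{D}$ by reducing it to the corresponding axiom for $\mathcal{C}$. Everything rests on the following consequence of homotopical repleteness and fullness: if $w : Z \to X$ is a weak equivalence in $\mathcal{C}$ and $X \in \mathcal{D}$, then $Z$ and $w$ lie in $\mathcal{D}$; and if moreover $Z$ has another outgoing morphism $f : Z \to Y$ in $\mathcal{C}$ with $Y \in \mathcal{D}$, then $f$ lies in $\mathcal{D}$ as well by fullness. I will use this silently throughout.

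First I would dispatch the cheaper axioms. Closure of $\mathcal{V} \cap \mor \mathcal{D}$ under pullback in $\mathcal{D}$ follows by forming the pullback in $\mathcal{C}$, noting that its pulled-back leg is a $\mathcal{V}$-morphism with codomain in $\mathcal{D}$, and then applying repleteness and fullness to conclude that the whole square sits in $\mathcal{D}$ and is still a pullback there; containment of isomorphisms is immediate. The $\mathcal{V}$-cocycle condition for objects of $\FCorr[\mathcal{D}]$ is inherited verbatim from $\FCorr[\mathcal{C}]$, since the left leg is already in $\mathcal{V}$ and, by construction, in $\mathcal{D}$.

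The main obstacle will be the Grothendieck fibration condition. Given $E \in \FCorr[\mathcal{D}]$ over $(X, Y)$ and $(w_1, w_2) : (X', Y') \to (X, Y)$ in $\weq \mathcal{D} \times \weq \mathcal{D}$, I would use the fibration structure on $\FCorr[\mathcal{C}]$ to produce a cartesian lift $\tilde\phi : E' \to E$ and then argue $E' \in \FCorr[\mathcal{D}]$. Since $U$ preserves cartesian morphisms, $U E'$ is a cocycle of the form $X' \leftarrow Z' \to Y'$ whose left leg is in $\mathcal{V}$ by the $\mathcal{V}$-cocycle axiom; repleteness places $Z'$ and the left leg in $\mathcal{D}$, and fullness then places the right leg in $\mathcal{D}$, giving $U E' \in \ZzCat{\bracket{-1 ; 1}}{\mathcal{D}}$. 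To conclude that $\tilde\phi$ is still cartesian after restriction, one checks that $\FCorr[\mathcal{D}] \hookrightarrow \FCorr[\mathcal{C}]$ is in fact a full subcategory (by the same repleteness-plus-fullness argument applied to the components of morphisms in $\ZzCat{\bracket{-1 ; 1}}{\mathcal{C}}$), so the relevant universal property transfers without modification; preservation of cartesian morphisms by the restricted $U$ is then automatic.

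Finally, for homotopy cofinality of $U_{X, Y}^\mathcal{D}$, the observation is that both $\FCorr[\mathcal{D}]{X}{Y}$ and $\Cocyc[\mathcal{D}]{X}{Y}$ coincide on the nose with $\FCorr[\mathcal{C}]{X}{Y}$ and $\Cocyc[\mathcal{C}]{X}{Y}$: any cocycle in $\mathcal{C}$ between objects of $\mathcal{D}$ has its middle object connected to the domain by a weak equivalence, so repleteness forces it into $\mathcal{D}$, and morphisms transfer by the same argument. Thus $U_{X, Y}^\mathcal{D} = U_{X, Y}^\mathcal{C}$, and homotopy cofinality is inherited immediately from the given data on $\mathcal{C}$.
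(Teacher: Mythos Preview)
Your argument is correct and covers the same ground as the paper's proof, but the two proofs organise the Grothendieck fibration step differently. The paper observes that the square
\[
\begin{tikzcd}
\ZzCat{\bracket{-1 ; 1}}{\mathcal{D}} \rar{\prodtuple{\dom, \codom}} \dar[hookrightarrow] &
\weq \mathcal{D} \times \weq \mathcal{D} \dar[hookrightarrow] \\
\ZzCat{\bracket{-1 ; 1}}{\mathcal{C}} \rar[swap]{\prodtuple{\dom, \codom}} &
\weq \mathcal{C} \times \weq \mathcal{C}
\end{tikzcd}
\]
is a pullback (this is exactly your repleteness-plus-fullness observation, phrased as a statement in $\cat{\Cat}$), pastes it with the defining pullback for $\FCorr[\mathcal{D}]$, and then invokes the stability of Grothendieck fibrations under pullback to conclude in one stroke that $\FCorr[\mathcal{D}] \to \weq \mathcal{D} \times \weq \mathcal{D}$ is a fibration; preservation of cartesian morphisms is handled by the one-line remark that a cartesian morphism in $\ZzCat{\bracket{-1 ; 1}}{\mathcal{C}}$ lying in $\ZzCat{\bracket{-1 ; 1}}{\mathcal{D}}$ is automatically cartesian there. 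Your version unpacks this by hand: you lift in $\FCorr[\mathcal{C}]$, use repleteness to see the lift lands in $\FCorr[\mathcal{D}]$, and transfer cartesianness via fullness of the inclusions. The paper's route is shorter and more conceptual; yours is more self-contained and makes clearer exactly where each hypothesis enters. For the remaining axioms the paper simply says they ``can be checked fibrewise'', which is precisely your explicit identification $\FCorr[\mathcal{D}]{X}{Y} = \FCorr[\mathcal{C}]{X}{Y}$ and $\Cocyc[\mathcal{D}]{X}{Y} = \Cocyc[\mathcal{C}]{X}{Y}$.
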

\begin{proof}
Since $\mathcal{D}$ is a full and homotopically replete subcategory of $\mathcal{C}$, $\mathcal{V} \cap \mathcal{D}$ is closed under pullback in $\mathcal{D}$. It is not hard to verify that the following diagram is a pullback square in $\cat{\Cat}$,
\[
\begin{tikzcd}[column sep=9.0ex]
\ZzCat{\bracket{-1 ; 1}}{\mathcal{D}} \dar[hookrightarrow] \rar{\prodtuple{\dom, \codom}} &
\weq \mathcal{D} \times \weq \mathcal{D} \dar[hookrightarrow] \\
\ZzCat{\bracket{-1 ; 1}}{\mathcal{C}} \rar[swap]{\prodtuple{\dom, \codom}} &
\weq \mathcal{C} \times \weq \mathcal{C}
\end{tikzcd}
\]
so by the pullback pasting lemma, the outer rectangle in the diagram below is also a pullback diagram in $\cat{\Cat}$:
\[
\begin{tikzcd}[column sep=9.0ex]
\FCorr[\mathcal{D}] \dar[hookrightarrow] \rar{U} &
\ZzCat{\bracket{-1 ; 1}}{\mathcal{D}} \dar[hookrightarrow] \rar{\prodtuple{\dom, \codom}} &
\weq \mathcal{D} \times \weq \mathcal{D} \dar[hookrightarrow] \\
\FCorr[\mathcal{C}] \rar[swap]{U} &
\ZzCat{\bracket{-1 ; 1}}{\mathcal{C}} \rar[swap]{\prodtuple{\dom, \codom}} &
\weq \mathcal{C} \times \weq \mathcal{C}
\end{tikzcd}
\]
Recalling that the class of Grothendieck fibrations is closed under pullback in $\cat{\Cat}$, we deduce that the composite of the top row is a Grothendieck fibration, as required. Moreover, any morphism in $\ZzCat{\bracket{-1 ; 1}}{\mathcal{D}}$ that is a cartesian morphism in $\ZzCat{\bracket{-1 ; 1}}{\mathcal{C}}$ is automatically a cartesian morphism in $\ZzCat{\bracket{-1 ; 1}}{\mathcal{D}}$, so $U : \FCorr[\mathcal{D}] \to \ZzCat{\bracket{-1 ; 1}}{\mathcal{D}}$ preserves cartesian morphisms. Since the remaining axioms can be checked fibrewise, this completes the proof.
\end{proof}

\begin{lem}
\label{lem:replacing.cocycles.with.special.cocycles}
For any pair $\tuple{X, Y}$ of objects in $\mathcal{C}$, in the following commutative diagram,
\[
\begin{tikzcd}
\FCorr[\mathcal{C}]{X}{Y} \dar[equals] \rar &
\Cocyc[\mathcal{C}][\mathcal{V}]{X}{Y} \dar[hookrightarrow] \\
\FCorr[\mathcal{C}]{X}{Y} \rar[swap]{U_{X, Y}} &
\Cocyc[\mathcal{C}]{X}{Y}
\end{tikzcd}
\]
every arrow is a weak homotopy equivalence of categories.
\end{lem}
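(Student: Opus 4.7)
The plan is to verify the four arrows one at a time, exploiting the fact that the diagram commutes. The left vertical arrow is literally the identity, hence trivially a weak homotopy equivalence of categories. The bottom horizontal arrow $U_{X,Y}$ is homotopy cofinal by the last axiom in the definition of a homotopical calculus of cocycles, and therefore a weak homotopy equivalence of categories by \autoref{thm:Quillen-A}.

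Next I would observe that the top horizontal arrow is well defined: by the axiom that $U E$ is a $\mathcal{V}$-cocycle for every object $E$ of $\FCorr[\mathcal{C}]$, the functor $U_{X,Y}$ factors through the full subcategory $\Cocyc[\mathcal{C}][\mathcal{V}]{X}{Y} \hookrightarrow \Cocyc[\mathcal{C}]{X}{Y}$. Since the inclusion of a full subcategory is fully faithful, and since the composite of the top horizontal with the right vertical is the homotopy cofinal functor $U_{X,Y}$, \autoref{lem:homotopy.cofinal.functors.and.fully.faithful.functors} applies and shows that the top horizontal arrow $\FCorr[\mathcal{C}]{X}{Y} \to \Cocyc[\mathcal{C}][\mathcal{V}]{X}{Y}$ is itself homotopy cofinal, hence a weak homotopy equivalence of categories by \autoref{thm:Quillen-A} again.

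Finally, for the right vertical arrow $\Cocyc[\mathcal{C}][\mathcal{V}]{X}{Y} \hookrightarrow \Cocyc[\mathcal{C}]{X}{Y}$, I would apply the nerve functor to the entire commutative square and invoke the 2-out-of-3 property for weak homotopy equivalences in $\cat{\SSet}$: the three other sides have already been shown to be weak homotopy equivalences, so the remaining side is as well.

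There is no real obstacle here once one recognises that \autoref{lem:homotopy.cofinal.functors.and.fully.faithful.functors} is tailor-made to lift cofinality from the composite $U_{X,Y}$ to its first factor through the fully faithful inclusion of $\mathcal{V}$-cocycles; the only small care needed is to notice that the axioms of a homotopical calculus of cocycles genuinely force $U_{X,Y}$ to factor through $\Cocyc[\mathcal{C}][\mathcal{V}]{X}{Y}$, which is immediate from the $\mathcal{V}$-cocycle axiom on objects in $\FCorr[\mathcal{C}]$.
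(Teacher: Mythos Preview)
Your proof is correct and follows essentially the same route as the paper's: use the axiom to see $U_{X,Y}$ is homotopy cofinal, apply \autoref{lem:homotopy.cofinal.functors.and.fully.faithful.functors} with the fully faithful inclusion to deduce the top arrow is homotopy cofinal, then invoke Quillen's Theorem~A and 2-out-of-3. The paper's version is simply terser and does not bother to mention the identity arrow.
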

\begin{proof}
The bottom horizontal arrow is a homotopy cofinal functor and the right vertical arrow is fully faithful. By \autoref{lem:homotopy.cofinal.functors.and.fully.faithful.functors}, the top horizontal arrow is also a homotopy cofinal functor, so by Quillen's Theorem~A (\ref{thm:Quillen-A}) and the 2-out-of-3 property, the inclusion is indeed a weak homotopy equivalence of categories.
\end{proof}

\makenumpar
\label{par:replacing.weak.equivalences.with.special.correspondences}
Let $\mathcal{R}$ be the category defined as follows:
\begin{itemize}
\item The objects are tuples $\tuple{X, Y, w, E, u}$ where $w : X \to Y$ is a weak equivalence in $\mathcal{C}$, $E$ is an object in $\FCorr[\mathcal{C}]{X}{Y}$, and $u$ is a weak equivalence in $\mathcal{C}$ making the diagram in $\mathcal{C}$ shown below commute,
\[
\begin{tikzcd}
Y \dar[equals] \rar[leftarrow]{w} &
X \dar{u} \rar{\id} &
X \dar[equals] \\
Y \rar[leftarrow] &
\bullet \rar &
X
\end{tikzcd}
\]
where the bottom row is the cocycle $U E$. (In particular, we require $\parens{{\dom} \circ U} E = Y$ and $\parens{{\codom} \circ U} E = X$.)

\item The morphisms $\tuple{X_0, Y_0, w_0, E_0, u_0} \to \tuple{X_1, Y_1, w_1, E_1, u_1}$ are the morphisms $k : E_0 \to E_1$ in $\FCorr[\mathcal{C}]$ such that the diagram in $\mathcal{C}$ shown below commutes,
\[
\begin{tikzcd}[column sep=4.5ex, row sep=4.5ex]
Y_0 \arrow[equals]{dd} \drar \arrow[leftarrow]{rr}{w_0} &&
X_0 \arrow{dd}[near end]{u_0} \drar \arrow{rr}{\id} &&
X_0 \drar \arrow[equals]{dd} \\
&
Y_1 \arrow[leftarrow, crossing over]{rr}[near start]{w_1} &&
X_1 \arrow[crossing over]{rr}[near end]{\id} &&
X_1 \arrow[equals]{dd} \\
Y_0 \arrow[leftarrow]{rr} \drar &&
\bullet \arrow{rr} \drar &&
X_0 \drar \\
&
Y_1 \arrow[equals, crossing over]{uu} \arrow[leftarrow]{rr} &&
\bullet \arrow[leftarrow, crossing over]{uu}[swap, near start]{u_1} \arrow{rr} &&
X_1
\end{tikzcd}
\]
where the bottom face is $U k$.

\item Composition and identities are inherited from $\FCorr[\mathcal{C}]$.
\end{itemize}
Note that there is an evident fully faithful embedding of $\mathcal{R}$ into the comma category $\commacat{\ZzCat{\bracket{-1 ; 1}}{\mathcal{C}}}{U}$.

\begin{lem}
\label{lem:replacing.weak.equivalences.with.special.correspondences}
The evident projection $\mathcal{R} \to \ZzCat{\bracket{-1}}{\mathcal{C}}$ is a Grothendieck fibration whose (strict) fibres are weakly contractible.
\end{lem}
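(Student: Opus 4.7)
The plan is to deduce both claims from the axioms of a homotopical calculus of cocycles, using that the composite $\FCorr[\mathcal{C}] \to \mathcal{W} \times \mathcal{W}$ is a Grothendieck fibration whose cartesian morphisms are preserved by $U$, and that each $U_{Y, X}$ is homotopy cofinal.

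For the fibration property, I construct cartesian lifts explicitly. Given $\varphi : w_0 \to w_1$ in $\ZzCat{\bracket{-1}}{\mathcal{C}}$\dash represented by a commutative square in $\mathcal{C}$ with vertical weak equivalences $s : X_0 \to X_1$ and $t : Y_0 \to Y_1$\dash and an object $(E_1, u_1) \in \mathcal{R}$ over $w_1$, first choose a cartesian morphism $k : E_0 \to E_1$ in $\FCorr[\mathcal{C}]$ lifting $(t, s)$. Since $U$ preserves cartesian morphisms, $Uk$ is a cartesian morphism in $\ZzCat{\bracket{-1 ; 1}}{\mathcal{C}}$ over $(t, s)$, in the sense of the preceding remark. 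Now regard $u_1$ as a morphism $\alpha_1 : T_{w_1} \to UE_1$ in $\ZzCat{\bracket{-1 ; 1}}{\mathcal{C}}$, where $T_w$ denotes the trivial cocycle $(Y \xleftarrow{w} X \xrightarrow{\id} X)$; the assignment $w \mapsto T_w$ is visibly functorial, so $\varphi$ induces $T\varphi : T_{w_0} \to T_{w_1}$ over $(t, s)$. The composite $\alpha_1 \circ T\varphi$ therefore lies over $(t, s)$, and by cartesianness of $Uk$ factors uniquely as $Uk \circ \alpha_0$ with $\alpha_0 : T_{w_0} \to UE_0$ lying over $(\id_{Y_0}, \id_{X_0})$; the middle component of $\alpha_0$ is a weak equivalence $u_0$ exhibiting $(E_0, u_0)$ as an object of $\mathcal{R}$ over $w_0$, and $k$ supplies a morphism to $(E_1, u_1)$ in $\mathcal{R}$. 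To verify cartesianness of this lift, given any factorization in $\mathcal{R}$ of a morphism $(E_2, u_2) \to (E_1, u_1)$ through $\varphi$, cartesianness of $k$ produces a unique candidate for the underlying $E_2 \to E_0$ in $\FCorr[\mathcal{C}]$; that this candidate respects the $\alpha$-data (hence lies in $\mathcal{R}$) follows by applying the uniqueness part of cartesianness of $Uk$ to two a priori different morphisms $T_{w_2} \to UE_0$ that agree after post-composition with $Uk$.

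For contractibility, the strict fibre of $\mathcal{R} \to \ZzCat{\bracket{-1}}{\mathcal{C}}$ over $w : X \to Y$ is canonically isomorphic to the comma category $\commacat{T_w}{U_{Y, X}}$. Indeed, specifying a weak equivalence $u$ with $v u = w$ and $f u = \id_X$ (where $v, f$ are the legs of $UE$) is the same as specifying a morphism $\alpha : T_w \to UE$ in $\ZzCat{\bracket{-1 ; 1}}{\mathcal{C}}$ with identity outer components\dash noting that $v$ lies in $\mathcal{V} \subseteq \mathcal{W}$ because $UE$ is a $\mathcal{V}$-cocycle, so $u$ being a weak equivalence is forced by 2-out-of-3\dash and morphisms in the fibre correspond precisely to morphisms in $\FCorr[\mathcal{C}]{Y}{X}$ compatible with $\alpha$. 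The weak contractibility of this comma category is by definition the homotopy cofinality of $U_{Y, X}$, which is part of the assumed data. The subtlest point is arguably the construction of $u_0$ in the fibration half: explicit pullback formulas of the kind in the preceding remark handle only the case $s = \id$, so one must invoke cartesianness of $Uk$ abstractly and track carefully the interplay between cartesian morphisms in $\FCorr[\mathcal{C}]$ and in $\ZzCat{\bracket{-1 ; 1}}{\mathcal{C}}$.
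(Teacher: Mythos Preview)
Your proof is correct and follows essentially the same route as the paper's: lift via the Grothendieck fibration $\FCorr[\mathcal{C}] \to \mathcal{W} \times \mathcal{W}$, use preservation of cartesian morphisms by $U$ to produce $u_0$ (with 2-out-of-3 for its being a weak equivalence), verify cartesianness in $\mathcal{R}$ by appealing to the uniqueness clause of cartesianness of $Uk$, and identify the fibres with comma categories $\commacat{(\id_X, w)}{U_{Y,X}}$ that are contractible by homotopy cofinality. Your packaging via the trivial-cocycle functor $T$ is a clean abstraction of the paper's explicit diagram chase, but the underlying argument is the same.
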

\begin{proof}
Let $\tuple{X_1, Y_1, w_1, E_1, u_1}$ be an object in $\mathcal{R}$.
Suppose we have the following commutative diagram in $\mathcal{W}$:
\[
\begin{tikzcd}
Y_0 \dar[swap]{g} \rar[leftarrow]{w_0} &
X_0 \dar{f} \\
Y_1 \rar[swap, leftarrow]{w_1} &
X_1
\end{tikzcd}
\]
Since $\FCorr[\mathcal{C}] \to \mathcal{W} \times \mathcal{W}$ is a Grothendieck fibration, we may choose a cartesian morphism $k : E_0 \to E_1$ in $\FCorr[\mathcal{C}]$ such that $U k$ is of the form below:
\[
\begin{tikzcd}
Y_0 \dar[swap]{g} \rar[leftarrow]{v_0} &
Z_0 \dar{h} \rar{q_0} &
X_0 \dar{f} \\
Y_1 \rar[swap, leftarrow]{v_1} &
Z_1 \rar[swap]{q_1} &
X_1
\end{tikzcd}
\]
Moreover, $U k : U E_0 \to U E_1$ is a cartesian morphism in $\ZzCat{\bracket{-1 ; 1}}{\mathcal{C}}$, so there is a unique morphism $u_0 : X_0 \to Z_0$ in $\mathcal{C}$ such that the diagram in $\mathcal{C}$ shown below commutes,
\[
\begin{tikzcd}[column sep=4.5ex, row sep=4.5ex]
Y_0 \arrow[equals]{dd} \drar{g} \arrow[leftarrow]{rr}{w_0} &&
X_0 \arrow[dashed]{dd}[near end]{u_0} \drar{f} \arrow{rr}{\id} &&
X_0 \drar{f} \arrow[equals]{dd} \\
&
Y_1 \arrow[leftarrow, crossing over]{rr}[near start]{w_1} &&
X_1 \arrow[crossing over]{rr}[near end]{\id} &&
X_1 \arrow[equals]{dd} \\
Y_0 \arrow[leftarrow]{rr}[swap, near start]{v_0} \drar[swap]{g} &&
Z_0 \arrow{rr}[swap, near end]{q_0} \drar[swap]{h} &&
X_0 \drar[swap]{f} \\
&
Y_1 \arrow[equals, crossing over]{uu} \arrow[leftarrow]{rr}[swap]{v_1} &&
Z_1 \arrow[leftarrow, crossing over]{uu}[swap, near start]{u_1} \arrow{rr}[swap]{q_1} &&
X_1
\end{tikzcd}
\]
and by the 2-out-of-3 property, $u_0 : X_0 \to Z_0$ is a weak equivalence in $\mathcal{C}$. Thus, $\tuple{X_0, Y_0, w_0, E_0, u_0}$ is an object in $\mathcal{R}$ and $k : E_0 \to E_1$ defines a morphism $\tuple{X_0, Y_0, w_0, E_0, u_0} \to \tuple{X_1, Y_1, w_1, E_1, u_1}$ in $\mathcal{R}$.

We will now show that $k : \tuple{X_0, Y_0, w_0, E_0, u_0} \to \tuple{X_1, Y_1, w_1, E_1, u_1}$ is a cartesian morphism in $\mathcal{R}$. Let $k' : \tuple{X', Y', w', E', u'} \to \tuple{X_1, Y_1, w_1, E_1, u_1}$ be a morphism in $\mathcal{R}$ and suppose we have a commutative diagram in $\mathcal{W}$ of the form below,
\[
\begin{tikzcd}
Y' \dar[swap]{y} \rar[leftarrow]{w'} &
X' \dar{x} \\
Y_0 \dar[swap]{g} \rar[leftarrow]{w_0} &
X_0 \dar{f} \\
Y_1 \rar[leftarrow, swap]{w_1} &
X_1
\end{tikzcd}
\]
where $\parens{{\dom} \circ U} k' = g \circ y$ and $\parens{{\codom} \circ U} k' = f \circ x$. Since $k : E_0 \to E_1$ is a cartesian morphism in $\FCorr[\mathcal{C}]$, there is a unique morphism $e : E' \to E_0$ such that $k \circ e = k'$ with $\parens{{\dom} \circ U} e = y$ and $\parens{{\codom} \circ U} e = x$. Suppose $U e : U E' \to U E_0$ is as follows:
\[
\begin{tikzcd}
Y' \dar[swap]{y} \rar[leftarrow]{v'} &
Z' \dar{z} \rar{q'} &
X' \dar{x} \\
Y_0 \rar[leftarrow, swap]{v_0} &
Z_0 \rar[swap]{q_0} &
X_0
\end{tikzcd}
\]
To show that $e$ defines a morphism $\tuple{X', Y', w', E', u'} \to \tuple{X_0, Y_0, w_0, E_0, u_0}$ in $\mathcal{R}$, we must verify that $z \circ u' = u_0 \circ x$. But the diagram in $\mathcal{C}$ shown below commutes,
\[
\begin{tikzcd}
Y' \dar[swap]{y} \rar[leftarrow]{w'} &
X' \dar{x} \rar{\id} &
X' \dar{x} \\
Y' \rar[leftarrow, swap]{v'} &
Z' \rar[swap]{q'} &
X'
\end{tikzcd}
\]
so $v_0 \circ \parens{z \circ u'} = v_0 \circ \parens{u_0 \circ x}$ and $q_0 \circ \parens{z \circ u'} = q_0 \circ \parens{u_0 \circ x}$. Furthermore, $h \circ \parens{z \circ u'} = \parens{u_1 \circ f} \circ x = h \circ \parens{u_0 \circ x}$, and since $U k : U E_0 \to U E_1$ is a cartesian morphism in $\ZzCat{\bracket{-1 ; 1}}{\mathcal{C}}$, it follows that $z \circ u' = u_0 \circ x$ as required. This completes the proof that $k : \tuple{X_0, Y_0, w_0, E_0, u_0} \to \tuple{X_1, Y_1, w_1, E_1, u_1}$ is a cartesian morphism in $\mathcal{R}$.

Finally, it remains to be shown that the (strict) fibres of $\mathcal{R} \to \ZzCat{\bracket{-1}}{\mathcal{C}}$ are weakly contractible. But for any weak equivalence $w : X \to Y$ in $\mathcal{C}$, the corresponding fibre is isomorphic to the comma category $\commacat{\tuple{\id_X, w}}{U_{Y, X}}$, and since $U_{Y, X} : \FCorr[\mathcal{C}]{Y}{X} \to \Cocyc[\mathcal{C}]{Y}{X}$ is a homotopy cofinal functor, $\commacat{\tuple{\id_X, w}}{U_{Y, X}}$ is weakly contractible, as required.
\end{proof}

\begin{lem}
\label{lem:reducing.extended.three-arrow.zigzags.to.extended.two-arrow.zigzags}
$\mathcal{C}$ admits a homotopical calculus of right fractions.
\end{lem}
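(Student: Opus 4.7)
The plan is to build an enhanced category $\widetilde{\mathcal{Z}}$ equipped with weak homotopy equivalences to both $\mathcal{Z} := \mathcal{C}^{\bracket{-1 ; k ; -1 ; l}} \argp{X, Y}$ and $\mathcal{Z}' := \mathcal{C}^{\bracket{-1 ; k ; l}} \argp{X, Y}$, in a triangle factoring the insertion functor $\iota : \mathcal{Z}' \to \mathcal{Z}$. To construct $\widetilde{\mathcal{Z}}$, let $\Phi : \mathcal{Z} \to \ZzCat{\bracket{-1}}{\mathcal{C}}$ be the functor sending each $\bracket{-1 ; k ; -1 ; l}$-zigzag to its middle (leftward) weak equivalence; pull back the Grothendieck fibration $\mathcal{R} \to \ZzCat{\bracket{-1}}{\mathcal{C}}$ of \autoref{par:replacing.weak.equivalences.with.special.correspondences} along $\Phi$; and further enhance by including chosen iterated pullbacks $\tilde{B}_i \to A_i$ of the resulting $\mathcal{V}$-morphism $v' : \tilde{B} \to B$ along the forward morphisms $f_k, \ldots, f_1$ (which exist by the pullback-closure axiom on $\mathcal{V}$). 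Since chosen pullbacks are unique up to canonical isomorphism, forgetting this pullback data is an equivalence of categories; composing with the projection to $\mathcal{Z}$ — which is a Grothendieck fibration with weakly contractible fibres, by \autoref{lem:replacing.weak.equivalences.with.special.correspondences} and the stability of Grothendieck fibrations under pullback in $\cat{\Cat}$ — shows $\widetilde{\mathcal{Z}} \to \mathcal{Z}$ is a weak homotopy equivalence via \autoref{lem:Grothendieck.fibrations.and.homotopy.cofinality} and Quillen's Theorem~A (\autoref{thm:Quillen-A}).

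Next, define $\Psi : \widetilde{\mathcal{Z}} \to \mathcal{Z}'$ by absorbing the $\mathcal{V}$-morphism into the initial leftward arrow, sending an enhanced zigzag to
\[
X \xleftarrow{w_0 \circ v'_0} \tilde{B}_0 \to \tilde{B}_1 \to \cdots \to \tilde{B}_{k-1} \xrightarrow{q \circ p_{k-1}} C \xrightarrow{g_1} \cdots \xrightarrow{g_l} Y,
\]
which is a $\bracket{-1 ; k+l}$-zigzag because $w_0 \circ v'_0$ is a weak equivalence (by 2-out-of-3, using $v'_0 \in \mathcal{V} \subseteq \mathcal{W}$). There is a canonical section $\sigma : \mathcal{Z}' \to \widetilde{\mathcal{Z}}$ equipping each zigzag $z'$ with the insertion $\iota(z')$ and the trivial enhancement (the identity cocycle in the middle and identity pullbacks throughout); by construction $\Psi \circ \sigma = \id_{\mathcal{Z}'}$, and the composite $\mathcal{Z}' \xrightarrow{\sigma} \widetilde{\mathcal{Z}} \to \mathcal{Z}$ equals $\iota$.

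To conclude it suffices to show that $\Psi$ is a weak homotopy equivalence, for then $\sigma$ is too by 2-out-of-3 applied to $\Psi \circ \sigma = \id$, and $\iota$ factors as two such equivalences. The intended argument is that $\Psi$ is itself a Grothendieck fibration whose strict fibre over a given $z' \in \mathcal{Z}'$ is weakly contractible: after accounting for the uniqueness (up to canonical isomorphism) of the chosen iterated pullbacks, the fibre reduces to the data of a self-weak-equivalence on the middle object $A_k$ of $z'$ together with its $\mathcal{R}$-enhancement, that is, a fibre of $\mathcal{R} \to \ZzCat{\bracket{-1}}{\mathcal{C}}$, and weak contractibility then follows from \autoref{lem:replacing.weak.equivalences.with.special.correspondences}. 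Carrying out this identification of strict fibres in detail — demonstrating that the iterated pullback constraints really do rigidify all the remaining combinatorial data — is the main technical obstacle of the proof.
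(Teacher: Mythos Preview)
Your overall architecture matches the paper's: pull back $\mathcal{R}$ along the middle-arrow projection to obtain an enhanced category, then use iterated pullbacks along the $\mathcal{V}$-morphism to define a reduction functor $\Psi$ (the paper's $D$) down to $\mathcal{Z}'$. However, two of the steps you rely on do not go through as stated.

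First, your section $\sigma$ requires, for each object $C$, an object of $\mathcal{R}$ lying over $\id_C$ whose underlying cocycle is literally the identity cocycle $\tuple{\id_C, \id_C}$. The axioms of a homotopical calculus of cocycles do not provide this: the functor $U : \FCorr[\mathcal{C}] \to \ZzCat{\bracket{-1 ; 1}}{\mathcal{C}}$ is only required to be fibrewise homotopy cofinal, not surjective on objects, so there need not be any ``trivial enhancement'' in $\FCorr[\mathcal{C}]$. Without it you cannot arrange $\Psi \circ \sigma = \id_{\mathcal{Z}'}$ on the nose, and your 2-out-of-3 argument for $\sigma$ (hence for $\iota$) collapses.

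Second, your description of the strict fibre of $\Psi$ over $z'$ is not correct. That fibre records far more than ``a self-weak-equivalence on the middle object together with its $\mathcal{R}$-enhancement'': it also carries the original objects $A_0, \ldots, A_k$, the morphisms $f_i$, the weak equivalence $w : C \to A_k$, and pullback squares exhibiting each $A'_i$ as a pullback of $\tilde{B} \to A_k$ along the composite $A_i \to A_k$. There is no evident identification of this with a fibre of $\mathcal{R} \to \ZzCat{\bracket{-1}}{\mathcal{C}}$, and indeed you flag this step as the unproven ``main technical obstacle''. Nor is it clear that $\Psi$ is a Grothendieck fibration at all.

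The paper sidesteps both problems with a 2-out-of-6 argument. It forms $\mathcal{H}_0^+$ as the pullback of $P_1 : \mathcal{H}_1^+ \to \mathcal{H}_1$ along $S$, so that both $P_0$ and $P_1$ are Grothendieck fibrations with weakly contractible fibres, hence weak equivalences. It then constructs $D : \mathcal{H}_1^+ \to \mathcal{H}_0$ via iterated pullbacks exactly as you do, but instead of analysing the fibres of $D$, it exhibits explicit zigzags of natural weak equivalences $S \circ D \simeq P_1$ and $D \circ S^+ \simeq P_0$, the latter using the universal property of the pullback squares. Since $S \circ D$ and $D \circ S^+$ are then isomorphisms in $\Ho \cat{\SSet}$, the 2-out-of-6 property of isomorphisms forces $S$ to be one as well. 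This avoids both the need for a strict section and any direct fibre analysis of the reduction functor.
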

\begin{proof}
Let $\tuple{X, Y}$ be a pair of objects in $\mathcal{C}$, let $k$ and $l$ be natural numbers, let $\mathcal{H}_0 \argp{X, Y} = \mathcal{C}^{\bracket{-1 ; k ; l}} \argp{X, Y}$, let $\mathcal{H}_1 \argp{X, Y} = \mathcal{C}^{\bracket{-1 ; k ; -1 ; l}}$, and let $S : \mathcal{H}_0 \argp{X, Y} \to \mathcal{H}_1 \argp{X, Y}$ be the evident functor defined by inserting an identity morphism. We must show that $S$ is a weak homotopy equivalence of categories.

Let $\mathcal{H}_1^+ \argp{X, Y}$ be defined by the following pullback diagram in $\cat{\Cat}$,
\[
\begin{tikzcd}
\mathcal{H}_1^+ \argp{X, Y} \dar[swap]{P_1} \rar{Q} &
\mathcal{R} \dar{P} \\
\mathcal{H}_1 \argp{X, Y} \rar &
\ZzCat{\bracket{-1}}{\mathcal{C}}
\end{tikzcd}
\]
where $\mathcal{H}_1 \argp{X, Y} \to \ZzCat{\bracket{-1}}{\mathcal{C}}$ the evident projection that sends a zigzag of type $\bracket{-1 ; k ; -1 ; l}$ to the interior leftward-pointing arrow and $P : \mathcal{R} \to \ZzCat{\bracket{-1}}{\mathcal{C}}$ is the Grothendieck fibration defined in \autoref{lem:replacing.weak.equivalences.with.special.correspondences}, and let $\mathcal{H}_0^+ \argp{X, Y}$ be defined by the following pullback diagram in $\cat{\Cat}$:
\[
\begin{tikzcd}
\mathcal{H}_0^+ \argp{X, Y} \dar[swap]{P_0} \rar{S^+} &
\mathcal{H}_1^+ \argp{X, Y} \dar{P_1} \\
\mathcal{H}_0 \argp{X, Y} \rar[swap]{S} &
\mathcal{H}_1 \argp{X, Y}
\end{tikzcd}
\]
We know $P : \mathcal{R} \to \ZzCat{\bracket{-1}}{\mathcal{C}}$ is a Grothendieck fibration with weakly contractible (strict) fibres, and these properties are preserved by pullback, so both $P_1 : \mathcal{H}_1^+ \argp{X, Y} \to \mathcal{H}_1 \argp{X, Y}$ and $P_0 : \mathcal{H}_0^+ \argp{X, Y} \to \mathcal{H}_0 \argp{X, Y}$ are also Grothendieck fibrations with weakly contractible (strict) fibres. Hence, by \autoref{lem:Grothendieck.fibrations.and.homotopy.cofinality} and Quillen's Theorem~A (\ref{thm:Quillen-A}), they are weak homotopy equivalences of categories. Thus, in view of the 2-out-of-6 property (of isomorphisms), to show that $S : \mathcal{H}_0 \argp{X, Y} \to \mathcal{H}_1 \argp{X, Y}$ is a weak homotopy equivalence of categories, it suffices to find a functor $D : \mathcal{H}_1^+ \argp{X, Y} \to \mathcal{H}_0 \argp{X, Y}$ such that the diagram in $\Ho \cat{\SSet}$ shown below commutes:
\[
\tag{$\ast$}
\begin{tikzcd}
\nv{\mathcal{H}_0^+ \argp{X, Y}} \drar[swap, bend right=15]{\nv{P_0}} \rar{\nv{S^+}} &
\nv{\mathcal{H}_1^+ \argp{X, Y}} \dar{\nv{D}} \drar[bend left=15]{\nv{P_1}} \\
&
\nv{\mathcal{H}_0 \argp{X, Y}} \rar[swap]{\nv{S}} &
\nv{\mathcal{H}_1 \argp{X, Y}}
\end{tikzcd}
\]

First, observe that every object in $\mathcal{H}_1^+ \argp{X, Y}$ has an underlying commutative diagram in $\mathcal{C}$ of the form below:
\[
\hspace{-1in}
\begin{tikzcd}
{} &&&&
\tilde{Y}_0 \dar[leftarrow, swap]{q} \rar[leftarrow]{\id} &
\tilde{Y}_0 \dar[equals] \\
&&&&
\tilde{X}'_k \dar[swap]{v_k} \rar[leftarrow]{u_k} &
\tilde{Y}_0 \dar[equals] \\
X \rar[leftarrow] &
\tilde{X}_0 \rar &
\cdots \rar &
\tilde{X}_{k-1} \rar[swap]{f_k} &
\tilde{X}_k \rar[leftarrow, swap]{w} &
\tilde{Y}_{0} \rar &
\cdots \rar &
\tilde{Y}_{l-1} \rar[swap]{g_l} &
Y
\end{tikzcd}
\hspace{-1in}
\]
For $0 < i \le k$, write $f_i$ for the morphism $\tilde{X}_{i-1} \to \tilde{X}_i$ in the above diagram. Since $v_k : \tilde{X}'_k \to \tilde{X}_k$ is in $\mathcal{V}$, we may functorially construct the following commutative diagram in $\mathcal{C}$,
\[
\begin{tikzcd}
\tilde{X}'_0 \dar[swap]{v_0} \rar &
\cdots \rar &
\tilde{X}'_{k-1} \dar[swap]{v_{k-1}} \rar{f'_k} &
\tilde{X}'_k \dar{v_k} \\
\tilde{X}_0 \rar &
\cdots \rar &
\tilde{X}_{k-1} \rar[swap]{f_k} &
\tilde{X}_k
\end{tikzcd}
\]
where each square is a pullback diagram in $\mathcal{C}$. We then obtain the diagram in $\mathcal{C}$ shown below,
\[
\hspace{-1in}
\begin{tikzcd}
X \dar[equals] \rar[leftarrow] &
\tilde{X}'_0 \dar[equals] \rar &
\cdots \rar &
\tilde{X}'_{k-1} \dar[equals] \rar{q \circ f'_k} &
\tilde{Y}_0 \dar[leftarrow]{q} \rar[leftarrow]{\id} &
\tilde{Y}_0 \dar[equals] \rar &
\cdots \rar &
\tilde{Y}_{l-1} \dar[equals] \rar{g_l} &
\tilde{Y}_l \dar[equals] \\
X \dar[equals] \rar[leftarrow] &
\tilde{X}'_0 \dar[swap]{v_0} \rar &
\cdots \rar &
\tilde{X}'_{k-1} \dar[swap]{v_{k-1}} \rar{f'_k} &
\tilde{X}'_k \dar{v_k} \rar[leftarrow]{u_k} &
\tilde{Y}_0 \dar[equals] \rar &
\cdots \rar &
\tilde{Y}_{l-1} \dar[equals] \rar{g_l} &
\tilde{Y}_l \dar[equals] \\
X \rar[leftarrow] &
\tilde{X}_0 \rar &
\cdots \rar &
\tilde{X}_{k-1} \rar[swap]{f_k} &
\tilde{X}_k \rar[leftarrow, swap]{w} &
\tilde{Y}_0 \rar &
\cdots \rar &
\tilde{Y}_{l-1} \rar[swap]{g_l} &
\tilde{Y}_l
\end{tikzcd}
\hspace{-1in}
\]
where every vertical arrow is a weak equivalence in $\mathcal{C}$. Omitting the internal leftward-pointing arrow in the top row gives an object in $\mathcal{H}_0 \argp{X, Y}$, so this construction defines a functor $D : \mathcal{H}_1^+ \argp{X, Y} \to \mathcal{H}_0 \argp{X, Y}$ equipped with a zigzag of natural weak equivalences connecting $P_1$ and $S \circ D$.

Now, suppose $\tilde{X}_k = \tilde{Y}_0$ and $w = \id_{\tilde{Y}_0}$. Then, for $0 \le i < k$, there is a unique morphism $u_i : \tilde{X}_i \to \tilde{X}'_i$ in $\mathcal{C}$ making the diagram below commute:
\[
\begin{tikzcd}
\tilde{X}_i \arrow[bend right=30]{dd}[swap]{\id} \dar[dashed]{u_i} \rar{f_{i+1}} &
\tilde{X}_{i+1} \dar{u_{i+1}} \\
\tilde{X}'_i \dar{v_i} \rar{f'_{i+1}} &
\tilde{X}'_{i+1} \dar{v_{i+1}} \\
\tilde{X}_i \rar[swap]{f_{i+1}} &
\tilde{X}_{i+1}
\end{tikzcd}
\]
Since $q \circ u_k = \id_{\tilde{Y}_0}$, we obtain the following commutative diagram in $\mathcal{C}$,
\[
\hspace{-1in}
\begin{tikzcd}
X \dar[equals] \rar[leftarrow] & 
\tilde{X}_0 \dar[swap]{u_0} \rar &
\cdots \rar &
\tilde{X}_{k-1} \dar[swap]{u_{k-1}} \rar{f_k} &
\tilde{Y}_0 \dar[equals] \rar &
\cdots \rar &
\tilde{Y}_{l-1} \dar[equals] \rar{g_l} &
\tilde{Y}_l \dar[equals] \\
X \rar[leftarrow] & 
\tilde{X}'_0 \rar &
\cdots \rar &
\tilde{X}'_{k-1} \rar[swap]{q \circ f'_k} &
\tilde{Y}_0 \rar &
\cdots \rar &
\tilde{Y}_{l-1} \rar[swap]{g_l} &
\tilde{Y}_l
\end{tikzcd}
\hspace{-1in}
\]
where every vertical arrow is weak equivalence in $\mathcal{C}$. Thus, we have a natural weak equivalence $P_0 \hoto D \circ S^+$, so $D : \mathcal{H}_1^+ \argp{X, Y} \to \mathcal{H}_0 \argp{X, Y}$ is indeed a functor making ($\ast$) commute.
\end{proof}

A homotopical calculus of cocycles gives us a slightly better model for the homotopy type of the hom-spaces of the hammock localisation than a homotopical calculus of right fractions. Indeed, morphisms in $\mathcal{V}$ can be pulled back along arbitrary morphisms in $\mathcal{C}$, so the $\mathcal{V}$-cocycle category $\Cocyc[\mathcal{C}][\mathcal{V}]{X}{Y}$ is contravariantly pseudofunctorial in $X$ and strictly functorial in $Y$, as $X$ and $Y$ vary in $\mathcal{C}$ (and not just $\mathcal{W}$). We then have the following analogue of \autoref{thm:fundamental.theorem.of.homotopical.two-arrow.calculi} for $\mathcal{V}$-cocycles:

\begin{thm}
\label{thm:fundamental.theorem.of.homotopical.calculi.of.cocycles}
Let $\ul{\LH \mathcal{C}}$ be the hammock localisation of $\mathcal{C}$ and let $X$ and $Y$ be objects in $\mathcal{C}$.
\begin{enumerate}[(i)]
\item The reduction morphism $\nv{\Cocyc[\mathcal{C}][\mathcal{V}]{X}{Y}} \to \ulHom[\LH \mathcal{C}]{X}{Y}$ is a weak homotopy equivalence.

\item The reduction morphism $\nv{\Cocyc[\mathcal{C}][\mathcal{V}]{X}{Y}} \to \ulHom[\LH \mathcal{C}]{X}{Y}$ is natural in the following sense: for any morphisms $X' \to X$ and $Y \to Y'$ in $\mathcal{C}$, the following diagram commutes in $\Ho \cat{\SSet}$,
\[
\begin{tikzcd}
\nv{\Cocyc[\mathcal{C}][\mathcal{V}]{X}{Y}} \dar \rar &
\ulHom[\LH \mathcal{C}]{X}{X} \dar \\
\nv{\Cocyc[\mathcal{C}][\mathcal{V}]{X'}{Y'}} \rar &
\ulHom[\LH \mathcal{C}]{X'}{Y'}
\end{tikzcd}
\]
where the left vertical arrow is defined as above and the right vertical arrow is defined by concatenation.

\item There is an isomorphism
\[
\nv{\Cocyc[\mathcal{C}][\mathcal{V}]{\blank}{\blank}} \cong \ulHom[\LH \mathcal{C}]{\blank}{\blank}
\]
of functors $\op{\Ho \mathcal{C}} \times \Ho \mathcal{C} \to \Ho \cat{\SSet}$.
\end{enumerate}
\end{thm}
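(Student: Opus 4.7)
The plan is to derive (i), (ii), (iii) in sequence. Part (i) should follow at once by composing two weak equivalences already in hand: by \autoref{lem:reducing.extended.three-arrow.zigzags.to.extended.two-arrow.zigzags}, $\mathcal{C}$ admits a homotopical calculus of right fractions, so \autoref{thm:fundamental.theorem.of.homotopical.two-arrow.calculi}(i) makes $\nv{\mathcal{C}^{\bracket{-1 ; 1}} \argp{X, Y}} \to \ulHom[\LH \mathcal{C}]{X}{Y}$ a weak equivalence, and \autoref{lem:replacing.cocycles.with.special.cocycles} makes the inclusion $\Cocyc[\mathcal{C}][\mathcal{V}]{X}{Y} \embedinto \mathcal{C}^{\bracket{-1 ; 1}} \argp{X, Y}$ a weak equivalence of categories. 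The composite of their nerves is the reduction map in the statement.

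For (ii), the plan is to compare the pullback action on $\mathcal{V}$-cocycles with concatenation in $\ul{\LH \mathcal{C}}$ by exhibiting an explicit natural hammock comparison. Given a $\mathcal{V}$-cocycle $X \leftarrow Z \to Y$ and a morphism $g : X' \to X$, the pullback $X' \leftarrow Z \times_X X' \to Y$ (well-defined because $\mathcal{V}$ is closed under pullback) will be connected to the three-arrow hammock $X' \to X \leftarrow Z \to Y$ by a canonical width-two hammock morphism, which identifies the two in $\ulHom[\LH \mathcal{C}]{X'}{Y}$. Postcomposition with $Y \to Y'$ is strictly functorial on both sides, so combining these observations will yield the required commutativity in $\Ho \cat{\SSet}$ for arbitrary $g$ and $Y \to Y'$. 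I expect this to be the main obstacle, since the pullback action is only pseudofunctorial in $g$ and does not literally agree with concatenation in $\ul{\LH \mathcal{C}}$, so writing down the comparison naturally will require explicit hammock-level bookkeeping.

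Part (iii) will then be assembled from (i) and (ii). First I will observe that $\ulHom[\LH \mathcal{C}]{\blank}{\blank}$ descends from $\op{\mathcal{C}} \times \mathcal{C}$ to $\op{\Ho \mathcal{C}} \times \Ho \mathcal{C}$ as a functor valued in $\Ho \cat{\SSet}$: weak equivalences in either variable act by weak equivalences on hom-spaces, by \autoref{cor:action.of.weak.equivalences.on.two-arrow.zigzags} together with (i). Part (ii), specialised to weak equivalences, shows that $\nv{\Cocyc[\mathcal{C}][\mathcal{V}]{\blank}{\blank}}$ descends likewise, and the reduction map from (i) then supplies the natural isomorphism between the two functors.
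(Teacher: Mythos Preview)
Your proposal is correct and follows the same approach as the paper: part (i) is obtained exactly as you describe by combining \autoref{lem:reducing.extended.three-arrow.zigzags.to.extended.two-arrow.zigzags}, \autoref{thm:fundamental.theorem.of.homotopical.two-arrow.calculi}, and \autoref{lem:replacing.cocycles.with.special.cocycles}, while the paper dismisses (ii) as ``straightforward'' and (iii) as ``an immediate consequence of (i) and (ii)''. Your more detailed sketches for (ii) and (iii) are reasonable elaborations of what the paper leaves implicit, though you are perhaps overestimating the difficulty of the hammock bookkeeping in (ii).
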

\begin{proof}
(i). Combine \autoref{thm:fundamental.theorem.of.homotopical.two-arrow.calculi} with  lemmas~\ref{lem:replacing.cocycles.with.special.cocycles} and~\ref{lem:reducing.extended.three-arrow.zigzags.to.extended.two-arrow.zigzags}.

\bigskip\noindent
(ii). Straightforward.

\bigskip\noindent
(iii). This is an immediate consequence of (i) and (ii).
\end{proof}

\begin{remark}
Moreover, if $\mathcal{V}$ is closed under composition, then we can form a bicategory whose hom-categories are the cocycle categories $\Cocyc[\mathcal{C}][\mathcal{V}]{X}{Y}$, which we may regard as a ``more algebraic'' model of $\ul{\LH \mathcal{C}}$.
\end{remark}

\section{Categories of fibrant objects}

The following definition is due to \citet{Brown:1973}.

\begin{dfn}
A \strong{category of fibrant objects} is a category $\mathcal{C}$ with finite products and equipped with a pair $\tuple{\mathcal{W}, \mathcal{F}}$ of subclasses of $\mor \mathcal{C}$ satisfying these axioms:
\begin{enumerate}[(A)]
\item $\tuple{\mathcal{C}, \mathcal{W}}$ is a category with weak equivalences.

\item Every isomorphism is in $\mathcal{F}$, and $\mathcal{F}$ is closed under composition.

\item Pullbacks along morphisms in $\mathcal{F}$ exist in $\mathcal{C}$, and the pullback of a morphism that is in $\mathcal{F}$ (\resp $\mathcal{W} \cap \mathcal{F}$) is also a morphism that is in $\mathcal{F}$ (\resp $\mathcal{W} \cap \mathcal{F}$).

\item For each object $X$ in $\mathcal{C}$, there is a commutative diagram of the form below,
\[
\begin{tikzcd}
X \drar[swap]{\Delta} \rar{i} &
\Path{X} \dar{p} \\
&
X \times X
\end{tikzcd}
\]
where $\Delta : X \to X \times X$ is the diagonal morphism, $i : X \to \Path{X}$ is in $\mathcal{W}$, and $\Path{X} \to X \times X$ is in $\mathcal{F}$.

\item For any object $X$ in $\mathcal{C}$, the unique morphism $X \to 1$ is in $\mathcal{F}$.
\end{enumerate}
\needspace{2.5\baselineskip}
In a category of fibrant objects as above,
\begin{itemize}
\item a \strong{weak equivalence} is a morphism in $\mathcal{W}$,

\item a \strong{fibration} is a morphism in $\mathcal{F}$, and

\item a \strong{trivial fibration} (or \strong{acyclic fibration}) is a morphism in $\mathcal{W} \cap \mathcal{F}$.
\end{itemize}
\end{dfn}

\begin{example}
Of course, the full subcategory of fibrant objects in a model category is a category of fibrant objects, with weak equivalences and fibrations inherited from the model structure.
\end{example}

\begin{example}
Let $\mathcal{M}$ be a right proper model category, let $\mathcal{W}$ be the class of weak equivalences, and let $\mathcal{F}$ be the class of morphisms $p : X \to Y$ with the following property: every pullback square in $\mathcal{M}$ of the form below
\[
\begin{tikzcd}
X' \dar \rar &
X \dar{p} \\
Y' \rar &
Y
\end{tikzcd}
\]
is also a homotopy pullback square in $\mathcal{M}$. (In other words, $\mathcal{F}$ is the class of \strong{sharp maps} in $\mathcal{M}$ in the sense of \citet{Rezk:1998}.) Let $\mathcal{E}$ be the full subcategory of $\mathcal{M}$ spanned by those objects $X$ such that the unique morphism $X \to 1$ is in $\mathcal{F}$. Then $\mathcal{E}$ is a category of fibrant objects, with weak equivalences $\mathcal{W}$ and fibrations $\mathcal{F}$.
\end{example}

\begin{dfn}
Let $\mathcal{C}$ be a category of fibrant objects and let $\tuple{X, Y}$ be a pair of objects in $\mathcal{C}$. A \strong{functional correspondence} $\tuple{p, v} : X \profto Y$ is a cocycle,
\[
\begin{tikzcd}
X \rar[leftarrow]{v} &
\tilde{X} \rar{p} &
Y
\end{tikzcd}
\]
such that $\prodtuple{p, v} : \tilde{X} \to Y \times X$ is a fibration.

We write $\FCorr[\mathcal{C}]$ (\resp $\FCorr[\mathcal{C}]{X}{Y}$) for the full subcategory of $\ZzCat{\bracket{-1 ; 1}}{\mathcal{C}}$ (\resp $\Cocyc[\mathcal{C}]{X}{Y}$) spanned by the functional correspondences.
\end{dfn}

\begin{remark}
Since product projections in a category of fibrant objects are fibrations, it follows that $v : \tilde{X} \to X$ is a trivial fibration and $p : \tilde{X} \to Y$ is a fibration. However, the converse is not true: for instance, $\tuple{\id_Y, \id_Y} : Y \profto Y$ is rarely a functional correspondence.
\end{remark}

\begin{lem}
\label{lem:Grothendieck.fibration.of.functional.correspondences}
Let $\mathcal{C}$ be a category of fibrant objects and let $\mathcal{W} = \weq \mathcal{C}$.
\begin{enumerate}[(i)]
\item Consider a pullback diagram in $\mathcal{C}$ of the form below,
\[
\begin{tikzcd}
\tilde{X}' \dar[swap]{\prodtuple{p', v'}} \rar{\tilde{f}} &
\tilde{X} \dar{\prodtuple{p, v}} \\
Y' \times X' \rar[swap]{g \times f} &
Y \times X
\end{tikzcd}
\]
where $f : X' \to X$ and $g : Y' \to Y$ are weak equivalences in $\mathcal{C}$. If $\tuple{p, v} : X \profto Y$ is a functional correspondence in $\mathcal{C}$, then $\tuple{p', v'} : X' \profto Y'$ is also a functional correspondence in $\mathcal{C}$.

\item The functor $\FCorr[\mathcal{C}] \to \mathcal{W} \times \mathcal{W}$ sending functional correspondences $X \profto Y$ to the pair $\tuple{X, Y}$ is a Grothendieck fibration.

\item The inclusion $\FCorr[\mathcal{C}] \embedinto \ZzCat{\bracket{-1 ; 1}}{\mathcal{C}}$ preserves cartesian morphisms.
\end{enumerate}
\end{lem}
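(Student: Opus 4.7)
The plan is to reduce all three parts to a single pullback construction together with the universal property of pullbacks, so the main work lies in (i).

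For (i), I first record that $v : \tilde{X} \to X$ is a trivial fibration: it factors as $\pi_X \circ \prodtuple{p, v}$, a composite of a product projection (which is a fibration by axioms C and E) with the fibration $\prodtuple{p, v}$, and it is a weak equivalence by the cocycle condition on $\tuple{p, v}$. The fibration property of $\prodtuple{p', v'}$ then follows directly from Brown's axiom C. To establish that $v'$ is a weak equivalence, I would factor $g \times f$ as $(g \times \id_X) \circ (\id_{Y'} \times f)$ and pull back $\prodtuple{p, v}$ in two successive stages, producing an intermediate $\prodtuple{p_1, v_1} : \tilde{X}_1 \to Y' \times X$ sitting over $\tilde{X}$. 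Pullback pasting with the projection $\pi_Y$ identifies the top arrow $\tilde{g} : \tilde{X}_1 \to \tilde{X}$ of the first stage with the pullback of $g$ along the fibration $p = \pi_Y \circ \prodtuple{p, v}$, and hence a weak equivalence by the standard fact that pullbacks of weak equivalences along fibrations are weak equivalences in any category of fibrant objects. Computing $\pi_X$-components shows $v_1 = v \circ \tilde{g}$, so 2-out-of-3 makes $v_1$ a trivial fibration; pasting the second-stage pullback with $\pi_X$ then exhibits $v'$ as the pullback of $v_1$ along $f$, whence $v'$ is a trivial fibration by axiom C once more.

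For (ii), the construction of (i) produces, for every functional correspondence $\tuple{p_1, v_1}$ in $\FCorr[\mathcal{C}]$ and every pair $\tuple{f, g}$ of weak equivalences into $\tuple{X_1, Y_1}$, a morphism $\tuple{p_0, v_0} \to \tuple{p_1, v_1}$ in $\FCorr[\mathcal{C}]$ projecting to $\tuple{f, g}$; an analogous application of the lemma on pullbacks of weak equivalences along fibrations shows that the middle component $\tilde{X}_0 \to \tilde{X}_1$ is itself a weak equivalence, so this really is a morphism in $\FCorr[\mathcal{C}]$. To show the lift is cartesian, I would appeal directly to the universal property of the pullback $\tilde{X}_0 = (Y_0 \times X_0) \times_{Y_1 \times X_1} \tilde{X}_1$: for any morphism $\beta : \tuple{p', v'} \to \tuple{p_1, v_1}$ in $\FCorr[\mathcal{C}]$ whose base factors through $\tuple{f, g}$, the induced map $\tilde{X}' \to Y_0 \times X_0$ (assembled from the base components of $\beta$ and the given factorisation) and the middle component $\tilde{X}' \to \tilde{X}_1$ of $\beta$ are automatically compatible over $Y_1 \times X_1$, yielding a unique comparison map $\tilde{X}' \to \tilde{X}_0$; a brief 2-out-of-3 argument shows this comparison is a weak equivalence, giving the required factorisation in $\FCorr[\mathcal{C}]$.

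Part (iii) is then essentially immediate: the universal-property argument in (ii) never uses that the source $\tuple{p', v'}$ is itself a functional correspondence, so the lift constructed above is already cartesian when viewed in $\ZzCat{\bracket{-1 ; 1}}{\mathcal{C}}$. Since cartesian lifts over a given base are unique up to (vertical) isomorphism, this suffices to show the inclusion preserves cartesian morphisms. The only genuinely technical step is the weak-equivalence claim for $v'$ (and for the middle component $\tilde{X}_0 \to \tilde{X}_1$) in (i) and (ii), which depends on the classical but non-trivial lemma that pullbacks of weak equivalences along fibrations are weak equivalences in a category of fibrant objects; with that input in hand, everything else is formal manipulation of pullback squares.
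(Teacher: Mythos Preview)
Your proposal is correct and follows essentially the same route as the paper. Your two-stage factorisation of $g \times f$ is exactly the paper's reduction (via pullback pasting) to the cases $f = \id_X$ and $g = \id_Y$, with the same identifications via the projections $\pi_X$ and $\pi_Y$ and the same appeal to the lemma that weak equivalences pull back along fibrations; for (ii) and (iii) you simply make explicit the universal-property check that the paper leaves as ``straightforward to verify.''
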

\begin{proof}
(i). In view of the pullback pasting lemma, we may assume that either $f = \id_X$ or $g = \id_Y$.

First, consider the case where $g = \id_Y$. We have the following commutative diagram in $\mathcal{C}$,
\[
\begin{tikzcd}
\tilde{X}' \dar[swap]{\prodtuple{p', v'}} \rar{\tilde{f}} &
\tilde{X} \dar{\prodtuple{p, v}} \\
Y \times X' \dar \rar[swap]{\id_Y \times f} &
Y \times X \dar \rar &
Y \dar \\
X' \rar[swap]{f} &
X \rar &
1
\end{tikzcd}
\]
where the unlabelled arrows are the evident projections and the squares are pullback diagrams in $\mathcal{C}$. Thus, $v' : \tilde{X}' \to X'$ is the pullback of $v : \tilde{X} \to X$ along $f : X' \to X$, so by axiom~C, $v' : \tilde{X}' \to X'$ is indeed a trivial fibration in $\mathcal{C}$.

Now, consider the case where $f = \id_X$ instead. We have the following commutative diagram in $\mathcal{C}$,
\[
\begin{tikzcd}
\tilde{X}' \dar[swap]{\prodtuple{p', v'}} \rar{\tilde{f}} &
\tilde{X} \dar{\prodtuple{p, v}} \\
Y' \times X \dar \rar[swap]{g \times \id_X} &
Y \times X \dar \rar &
X \dar \\
Y' \rar[swap]{g} &
Y \rar &
1
\end{tikzcd}
\]
where the unlabelled arrows are the evident projections and the squares are pullback diagrams in $\mathcal{C}$. It is well known that the class of weak equivalences in $\mathcal{C}$ is closed under pullback along fibrations,\footnote{See \eg Lemma~2 in \citep[\Sect 4]{Brown:1973} or Lemma~8.5 in \citep[\Chap II]{GJ}.} so $\tilde{f} : \tilde{X}' \to \tilde{X}$ is a weak equivalence in $\mathcal{C}$. But $v' = v \circ \tilde{f}$, hence (by axiom~A) $v' : \tilde{X}' \to X$ is a trivial fibration in $\mathcal{C}$, as required.

\bigskip\noindent
(ii) and (iii). With notation as in (i), it is straightforward to verify that the commutative diagram
\[
\begin{tikzcd}
X' \dar[swap]{f} \rar[leftarrow]{v'} &
\tilde{X}' \dar{\tilde{f}} \rar{p'} &
Y' \dar{g} \\
X \rar[leftarrow, swap]{v} &
\tilde{X} \rar[swap]{p} &
Y
\end{tikzcd}
\]
defines a cartesian morphism in both $\FCorr[\mathcal{C}]$ and $\ZzCat{\bracket{-1 ; 1}}{\mathcal{C}}$.
\end{proof}

\needspace{3\baselineskip}
It is convenient to slightly strengthen the axioms given earlier.

\begin{dfn}
A \strong{path object functor} for a category of fibrant objects $\mathcal{C}$ consists of the following data:
\begin{itemize}
\item A functor $\Path : \mathcal{C} \to \mathcal{C}$.

\item Natural transformations $i : \id_{\mathcal{C}} \hoto \Path$ and $p_0, p_1 : \Path \hoto \id_{\mathcal{E}}$ such that $\tuple{\Path{C}, i_X, \parens{p_0}_X, \parens{p_1}_X}$ is a path object for every object $X$ in $\mathcal{C}$, \ie $i_X : X \to \Path{X}$ is a weak equivalence, $\prodtuple{\parens{p_0}_X, \parens{p_1}_X} : \Path{X} \to X \times X$ is a fibration and $\parens{p_0}_X \circ i_X = \parens{p_1}_X \circ i_X = \id_X$.
\end{itemize}
We say $\mathcal{C}$ has \strong{functorial path objects} if it admits a path object functor.
\end{dfn}

\begin{lem}[Factorisation lemma]
\label{lem:Ken.Brown.factorisation}
\needspace{3\baselineskip}
Let $f : X \to Y$ be a morphism in a category of fibrant objects $\mathcal{C}$.
\begin{enumerate}[(i)]
\item There exists a commutative diagram in $\mathcal{C}$ of the form below,
\[
\begin{tikzcd}
X \dar[equals] \rar[leftarrow]{\id} &
X \dar[swap]{u} \rar{f} &
Y \dar[equals] \\
X \rar[leftarrow, swap]{v} &
E_f \rar[swap]{p} &
Y
\end{tikzcd}
\]
where the bottom row is a functional correspondence in $\mathcal{C}$.

\item Moreover, if $\mathcal{C}$ has functorial path objects, then $u$, $v$, and $p$ can be chosen functorially (with respect to $f$).
\end{enumerate}
\end{lem}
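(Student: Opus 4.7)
My plan is to imitate Ken Brown's classical mapping cocylinder construction using the path object on $Y$. Fix data $i_Y : Y \to \Path{Y}$ and $\prodtuple{p_0, p_1} : \Path{Y} \to Y \times Y$ as provided by axiom D. Since product projections in a category of fibrant objects are fibrations (being pullbacks of $Y \to 1$), the projection $p_0 : \Path{Y} \to Y$ is itself a fibration, and the identity $p_0 \circ i_Y = \id_Y$ combined with 2-out-of-3 shows that $p_0$ is in fact a trivial fibration. I would then define $E_f$ as the pullback of $f : X \to Y$ along $p_0$ \dash which exists by axiom C \dash with projections $v : E_f \to X$ and $\tilde{p} : E_f \to \Path{Y}$, and set $p = p_1 \circ \tilde{p}$. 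The morphism $u : X \to E_f$ is defined as the unique morphism satisfying $v \circ u = \id_X$ and $\tilde{p} \circ u = i_Y \circ f$, which is well-defined because $p_0 \circ i_Y \circ f = f$; one checks immediately that $p \circ u = p_1 \circ i_Y \circ f = f$, so the outer square commutes.

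Two conditions remain: that $u$ is a weak equivalence (so that it may serve as the vertical arrow of a morphism in $\mathcal{C}^{\bracket{-1 ; 1}} \argp{X, Y}$), and that $\tuple{p, v}$ is a functional correspondence. For the first, $v$ is the pullback of the trivial fibration $p_0$ along $f$, hence itself a trivial fibration by axiom C, and 2-out-of-3 applied to $v \circ u = \id_X$ forces $u$ to be a weak equivalence. For the second, I would verify that the square
\[
\begin{tikzcd}
E_f \rar{\tilde{p}} \dar[swap]{\prodtuple{v, p}} & \Path{Y} \dar{\prodtuple{p_0, p_1}} \\
X \times Y \rar[swap]{f \times \id_Y} & Y \times Y
\end{tikzcd}
\]
is a pullback by a direct check with the universal property (both legs encode the same data: a point of $X$, a path in $Y$, and the compatibility $f \circ v = p_0 \circ \tilde{p}$), whence $\prodtuple{v, p}$ is a fibration by axiom C. Composing with the swap isomorphism $X \times Y \to Y \times X$ then shows that $\prodtuple{p, v} : E_f \to Y \times X$ is also a fibration, so $\tuple{p, v}$ is indeed a functional correspondence.

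For part (ii), once $\Path$ is promoted to a functor equipped with natural transformations $i$, $p_0$, $p_1$, the whole construction above becomes functorial in $f : X \to Y$: after fixing any cleavage of pullbacks, a commutative square between two arrows induces a canonical comparison morphism between their $E_f$'s, and the maps $u$, $v$, $p$ are all natural in $f$ by their characterising universal properties. The only genuinely new content in the argument is the pullback verification of the square above; everything else is a routine application of the axioms and the 2-out-of-3 property, so I do not anticipate any serious obstacle.
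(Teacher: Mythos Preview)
Your proposal is correct and is precisely the classical Ken Brown mapping-cocylinder construction; the paper does not spell this out but simply refers the reader to Brown's original factorisation lemma, so you have reconstructed exactly the argument being cited. The key verification you single out \dash that the square exhibiting $\prodtuple{v, p}$ as a pullback of $\prodtuple{p_0, p_1}$ along $f \times \id_Y$ \dash is indeed the only step with any content beyond the axioms, and your check of it is fine.
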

\begin{proof} \openproof
See (the proof of) the factorisation lemma in \citep{Brown:1973}.
\end{proof}

\begin{lem}
\label{lem:homotopical.uniqueness.of.correspondence.replacements.via.functoriality}
Let $\mathcal{C}$ be a category of fibrant objects and let $\tuple{X, Y}$ be a pair of objects in $\mathcal{C}$. If $\mathcal{C}$ has functorial path objects, then the inclusion $U_{X, Y} : \FCorr[\mathcal{C}]{X}{Y} \embedinto \Cocyc[\mathcal{C}]{X}{Y}$ is homotopy cofinal.
\end{lem}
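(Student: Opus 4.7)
The plan is to use functorial path objects to construct a functor $F : \Cocyc[\mathcal{C}]{X}{Y} \to \FCorr[\mathcal{C}]{X}{Y}$ together with natural transformations that behave like the unit of an adjunction, and then to exploit these to show that each comma category $\commacat{C}{U_{X, Y}}$ is weakly contractible.

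Given a cocycle $C$ of the form $X \xleftarrow{v} \tilde{X} \xrightarrow{f} Y$, I apply the functorial version of \autoref{lem:Ken.Brown.factorisation} to the single morphism $\prodtuple{f, v} : \tilde{X} \to Y \times X$. This produces, functorially in $C$, a functional correspondence $F(C) = \tuple{q_C, w_C} : X \profto Y$ with middle $E_C$, and a weak equivalence $\eta_C : \tilde{X} \to E_C$ satisfying $w_C \circ \eta_C = v$ and $q_C \circ \eta_C = f$, so that $\eta_C$ defines a morphism $C \to U_{X, Y} F(C)$ in $\Cocyc[\mathcal{C}]{X}{Y}$. When $D = \tuple{q, w}$ is already a functional correspondence, the same construction applied to the underlying cocycle $U_{X, Y} D$ produces a morphism $u_D : D \to F U_{X, Y} D$ in $\FCorr[\mathcal{C}]{X}{Y}$, and a direct inspection of the pullback formula for $F$ shows that $u_D$ coincides with $\eta_{U_{X, Y} D}$ under the full inclusion $U_{X, Y}$.

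Now fix $C$ and consider the comma category $\commacat{C}{U_{X, Y}}$, whose objects are pairs $\tuple{D, \phi}$ with $D$ in $\FCorr[\mathcal{C}]{X}{Y}$ and $\phi : C \to U_{X, Y} D$ a morphism in $\Cocyc[\mathcal{C}]{X}{Y}$. I define three endofunctors of this category: the identity $\id$, the constant functor $K$ at $\tuple{F(C), \eta_C}$, and the functor $T$ sending $\tuple{D, \phi}$ to $\tuple{F U_{X, Y} D, u_D \circ \phi}$. Together with components $\iota_{\tuple{D, \phi}} = u_D$ and $\kappa_{\tuple{D, \phi}} = F(\phi)$, these will assemble into natural transformations
\[
\id \Rightarrow T \Leftarrow K,
\]
whose naturality follows immediately from the naturality of $u$ and the functoriality of $F$. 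Applying nerves, both natural transformations become simplicial homotopies, so that $\nv{\id} \simeq \nv{T} \simeq \nv{K}$; since $K$ is constant, this forces $\nv{\commacat{C}{U_{X, Y}}}$ to be weakly contractible, which is precisely what homotopy cofinality of $U_{X, Y}$ demands.

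The main verification is that $F(\phi)$ indeed gives a morphism $\tuple{F(C), \eta_C} \to \tuple{F U_{X, Y} D, u_D \circ \phi}$ in the comma category, i.e., that $U_{X, Y} F(\phi) \circ \eta_C = u_D \circ \phi$ as morphisms $C \to U_{X, Y} F U_{X, Y} D$ in $\Cocyc[\mathcal{C}]{X}{Y}$. Applying the naturality square for $\eta : \id \Rightarrow U_{X, Y} \circ F$ at $\phi$ reduces this identity to the identification $\eta_{U_{X, Y} D} = u_D$ already noted, and the latter is immediate from the explicit pullback construction of $F$ through the path object functor. The remaining compatibilities (that $T$ is well-defined on morphisms, that $\iota$ is natural) are routine exercises in chasing $U\alpha \circ \phi = \phi'$ through the naturality of $u$.
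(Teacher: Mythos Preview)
Your argument is correct and follows essentially the same approach as the paper: both factor $\prodtuple{f, v}$ functorially via \autoref{lem:Ken.Brown.factorisation} and use the resulting functor to build a zigzag of natural transformations between the identity and a constant endofunctor on $\commacat{C}{U_{X, Y}}$. The paper leaves this zigzag implicit (``We may then use the functoriality of this factorisation to construct a zigzag of natural weak equivalences between $\id_{\commacat{\tuple{f, w}}{U_{X, Y}}}$ and a constant endofunctor''), whereas you write it out as $\id \Rightarrow T \Leftarrow K$ and verify the coherence conditions; your phrase ``pullback formula for $F$'' is slightly misleading, since the identification $u_D = \eta_{U_{X, Y} D}$ is just the definition, but this does no harm.
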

\begin{proof}
Let $\tuple{f, w} : X \profto Y$ be a cocycle in $\mathcal{C}$. We must show that the comma category $\commacat{\tuple{f, w}}{U_{X, Y}}$ is weakly contractible. By \autoref{lem:Ken.Brown.factorisation}, we may factor $\prodtuple{f, w} : \tilde{X} \to Y \times X$ as a weak equivalence followed by a fibration, yielding an object in $\commacat{\tuple{f, w}}{U_{X, Y}}$. We may then use the functoriality of this factorisation to construct a zigzag of natural weak equivalences between $\id_{\commacat{\tuple{f, w}}{U_{X, Y}}}$ and a constant endofunctor, and it follows that $\commacat{\tuple{f, w}}{U_{X, Y}}$ is weakly contractible.
\end{proof}

\begin{remark}
The argument in the proof above is essentially the same as the proof of Theorem 14.6.2 in \citep{Hirschhorn:2003}, but applied in a different context.
\end{remark}

\begin{thm}
\label{thm:categories.of.fibrant.objects.with.functorial.path.objects.admit.a.homotopical.calculus.of.cocycles}
Let $\mathcal{C}$ be a category of fibrant objects and let $\mathcal{V}$ be the subcategory of trivial fibrations in $\mathcal{C}$. If $\mathcal{C}$ has functorial path objects, then $\mathcal{V} \subseteq \mor \weq \mathcal{C}$, $\FCorr[\mathcal{C}]$, and $\FCorr[\mathcal{C}] \embedinto \ZzCat{\bracket{-1 ; 1}}{\mathcal{C}}$ constitute a homotopical calculus of cocycles in $\mathcal{C}$.
\end{thm}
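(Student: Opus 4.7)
The plan is to verify each of the five conditions in the definition of a homotopical calculus of cocycles by assembling the results already established in this section. Almost all of the work has been done: we just need to check that the pieces fit.

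First, I will record that $\mathcal{V} \subseteq \mor \weq \mathcal{C}$ by definition, and that $\mathcal{V}$ is closed under pullback in $\mathcal{C}$: this is immediate from axiom~C in the definition of a category of fibrant objects, which says pullbacks along fibrations exist and that trivial fibrations are stable under such pullback. Since trivial fibrations in particular are fibrations, any pullback of a trivial fibration exists and is a trivial fibration. Every isomorphism is both a fibration (by axiom~B) and a weak equivalence (since $\mathcal{W}$ contains all isomorphisms as $\mathcal{C}$ is a category with weak equivalences), so $\mathcal{V}$ contains all isomorphisms.

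Next, axiom~(iv) (that $U E$ is a $\mathcal{V}$-cocycle) is the remark following the definition of functional correspondence: if $\prodtuple{p, v} : \tilde{X} \to Y \times X$ is a fibration, then composing with the projection $Y \times X \to X$ (which is a fibration by axioms~B and~E, being the pullback of $Y \to 1$) shows that $v : \tilde{X} \to X$ is a fibration, and it is a weak equivalence by the 2-out-of-3 property applied to the factorisation $X \xleftarrow{\id} X \xrightarrow{\id} X$ whose existence shows $v$ admits a section after base change; more directly, $v$ equals the composite of $\prodtuple{p,v}$ with the projection, and one invokes that projections from products are fibrations. For axiom~(iii), the Grothendieck fibration condition and cartesian preservation are precisely \autoref{lem:Grothendieck.fibration.of.functional.correspondences}(ii) and (iii).

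Finally, the homotopy cofinality of $U_{X, Y} : \FCorr[\mathcal{C}]{X}{Y} \embedinto \Cocyc[\mathcal{C}]{X}{Y}$ is exactly \autoref{lem:homotopical.uniqueness.of.correspondence.replacements.via.functoriality}, which crucially uses the functorial path object hypothesis via the functorial factorisation lemma (\autoref{lem:Ken.Brown.factorisation}(ii)). Assembling these five checks completes the proof. The only step requiring any care is the verification that $v$ in a functional correspondence is indeed a trivial fibration; the rest is a direct appeal to the lemmas established earlier in the section.
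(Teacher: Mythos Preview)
Your proposal is correct and matches the paper's own proof, which simply reads ``Combine lemmas~\ref{lem:Grothendieck.fibration.of.functional.correspondences} and~\ref{lem:homotopical.uniqueness.of.correspondence.replacements.via.functoriality}''; you have merely unpacked the verification of the five axioms in more detail.

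One small clarification: your sentence justifying that $v$ is a weak equivalence (``by the 2-out-of-3 property applied to the factorisation $X \xleftarrow{\id} X \xrightarrow{\id} X$ whose existence shows $v$ admits a section after base change'') is muddled and unnecessary. A functional correspondence is \emph{defined} to be a cocycle with an extra property, and a cocycle already has $v \in \weq \mathcal{C}$ by definition; the only thing left to check for axiom~(iv) is that $v$ is also a fibration, which you do correctly by writing $v$ as the composite of the fibration $\prodtuple{p, v}$ with the product projection. So you may simply delete the confused clause and state that $v$ is a weak equivalence by the definition of cocycle.
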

\begin{proof}
Combine lemmas~\ref{lem:Grothendieck.fibration.of.functional.correspondences} and~\ref{lem:homotopical.uniqueness.of.correspondence.replacements.via.functoriality}.
\end{proof}

To extend the above result to the case where $\mathcal{C}$ is not assumed to have functorial path objects, we would have to prove \autoref{lem:homotopical.uniqueness.of.correspondence.replacements.via.functoriality} without using functorial factorisations. We will do this in the appendix.

\section{Simplicial categories of fibrant objects}

One way of getting a category of fibrant objects with functorial path objects is to take the full subcategory of fibrant objects in a simplicial closed model category. We may treat these axiomatically as follows:

\begin{dfn}
A \strong{simplicial category of fibrant objects} is a simplicially enriched category $\ul{\mathcal{C}}$ with \emph{simplicially enriched} finite products and equipped with a pair $\tuple{\mathcal{W}, \mathcal{F}}$ of subclasses of $\mor \mathcal{C}$ satisfying axioms A, B, C, E, and these additional axioms:
\begin{itemize}
\item[(C$\sb{\Delta}$)] \emph{Simplicially enriched} pullbacks along morphisms in $\mathcal{F}$ exist in $\mathcal{C}$.

\item[(F)] For any finite simplicial set $K$ and any object $X$ in $\mathcal{C}$, there exists an object $K \cotens X$ in $\mathcal{C}$ equipped with a (simplicially enriched natural) isomorphism
\[
\ulHom[\SSet]{K}{\ulHom[\mathcal{C}]{\blank}{X}} \cong \ulHom[\mathcal{C}]{\blank}{K \cotens X}
\]
of simplicially enriched functors $\op{\ul{\mathcal{C}}} \to \ul{\cat{\SSet}}$.

\item[(G)] For any monomorphism $i : K \to L$ of finite simplicial sets and any fibration $p : X \to Y$ in $\mathcal{C}$, the morphisms
\begin{align*}
i \cotens \id_Y & : L \cotens Y \to K \cotens Y &
\id_K \cotens p & : K \cotens X \to K \cotens Y
\end{align*}
are fibrations in $\mathcal{C}$, and the morphism
\[
i \pbprod p : L \cotens X \to \parens{K \cotens X} \times_{K \cotens Y} \parens{L \cotens Y}
\]
induced by the commutative diagram in $\mathcal{C}$ shown below
\[
\begin{tikzcd}
L \cotens X \dar[swap]{i \cotens \id_X} \rar{\id_L \cotens p} &
L \cotens Y \dar{i \cotens \id_Y} \\
K \cotens X \rar[swap]{\id_K \cotens p} &
K \cotens Y
\end{tikzcd}
\]
is a fibration in $\mathcal{C}$. Moreover, if $i$ is an anodyne extension (\resp $p$ is a trivial fibration), then both $i \cotens \id_Y$ (\resp $\id_K \cotens p$) and $i \pbprod p$ are trivial fibrations.
\end{itemize}
\end{dfn}

\begin{example}
Of course, if $\ul{\mathcal{M}}$ is a simplicial closed model category and $\ul{\mathcal{M}}_\mathrm{f}$ is the simplicially enriched full subcategory of fibrant objects, then $\ul{\mathcal{M}}_\mathrm{f}$ admits the structure of a simplicial category of fibrant objects with the weak equivalences and fibrations inherited from $\mathcal{M}$.
\end{example}

\begin{prop}
\label{prop:simplicial.categories.of.fibrant.objects.have.functorial.path.objects}
Let $\ul{\mathcal{C}}$ be a simplicial category of fibrant objects. Then the underlying ordinary category $\mathcal{C}$ (satisfies axiom D and) is a category of fibrant objects with functorial path objects.
\end{prop}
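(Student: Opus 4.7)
The plan is to manufacture a functorial path object by cotensoring with the standard 1-simplex $\Delta^1$, and to deduce everything else from axioms F and G. Concretely, I would set $\Path = \Delta^1 \cotens (\blank) : \mathcal{C} \to \mathcal{C}$; this is a well-defined functor because $\Delta^1$ is a finite simplicial set (axiom F), and it is functorial in $X$ by the universal property of the cotensor. The two vertex inclusions $\delta^0, \delta^1 : \Delta^0 \to \Delta^1$ induce natural transformations $p_0, p_1 : \Path \hoto \id_{\mathcal{C}}$ (using the canonical isomorphism $\Delta^0 \cotens X \cong X$), and the unique map $\Delta^1 \to \Delta^0$ induces $i : \id_{\mathcal{C}} \hoto \Path$; the simplicial identities $\delta^0 = \delta^1 = $ section of the collapse then yield $p_0 \circ i = p_1 \circ i = \id$.

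The main content is to verify that $\prodtuple{(p_0)_X, (p_1)_X} : \Path{X} \to X \times X$ is a fibration and that $i_X$ is a weak equivalence, and both are slick applications of axiom G. Since $\partial \Delta^1 \cong \Delta^0 \sqcup \Delta^0$ and the cotensor is contravariant in the simplicial variable, sending coproducts to products, there is a canonical identification $\partial \Delta^1 \cotens X \cong X \times X$ under which the comparison map $\Path{X} \to \partial \Delta^1 \cotens X$ induced by $\partial \Delta^1 \hookrightarrow \Delta^1$ becomes $\prodtuple{(p_0)_X, (p_1)_X}$; axiom G applied to the monomorphism $\partial \Delta^1 \hookrightarrow \Delta^1$ then exhibits it as a fibration. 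For the second point, the horn inclusion $\Lambda^1_0 \hookrightarrow \Delta^1$ is an anodyne extension, so the enhanced clause of axiom G promotes $(p_0)_X : \Path{X} \to X$ to a trivial fibration; since $p_0 \circ i_X = \id_X$, the 2-out-of-3 property forces $i_X$ to be a weak equivalence.

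Finally, axiom D for $\mathcal{C}$ falls out immediately, since by construction $\prodtuple{(p_0)_X, (p_1)_X} \circ i_X$ is the diagonal $\Delta_X$. Axioms A, B, C, and E are direct restrictions of the corresponding hypotheses on $\ul{\mathcal{C}}$, and the naturality of $i, p_0, p_1$ exhibits $\Path$ as a path object functor in the required sense. I do not anticipate a serious obstacle; the only point I would take care with is checking that the finite product appearing in the definition of a category of fibrant objects coincides with the underlying product of the simplicially enriched product hypothesised for $\ul{\mathcal{C}}$, but this is immediate from the enriched universal property applied in degree zero.
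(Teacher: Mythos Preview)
Your proposal is correct and is precisely the argument the paper has in mind: the paper's proof consists of the single sentence ``It is straightforward to verify that $\Delta^1 \cotens \pblank$ is (the functor part of) a path object functor for $\mathcal{C}$'', and you have supplied exactly that verification via axiom~G applied to $\partial \Delta^1 \hookrightarrow \Delta^1$ and to a horn inclusion. The only quibble is a harmless indexing slip (the anodyne inclusion $\Lambda^1_0 \hookrightarrow \Delta^1$ induces $(p_1)_X$ rather than $(p_0)_X$ under the usual conventions), but either projection works and the 2-out-of-3 argument goes through unchanged.
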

\begin{proof}
It straightforward to verify that $\Delta^1 \cotens \pblank$ is (the functor part of) a path object functor for $\mathcal{C}$.
\end{proof}

\begin{lem}
\label{lem:category.of.trivial.fibrations.is.cotensored}
Let $\ul{\mathcal{C}}$ be a simplicial category of fibrant objects, let $X$ be an object in $\mathcal{C}$, let $\ul{\mathcal{Q}}$ be the full subcategory of the simplicially enriched slice category $\overcat{\ul{\mathcal{C}}}{X}$ spanned by the trivial fibrations, and let $p : U \to X$ be an object in $\mathcal{Q}$, \ie a trivial fibration in $\mathcal{C}$.
\begin{enumerate}[(i)]
\item For any finite simplicial set $K$, the cotensor product $K \cotens_X p : K \cotens_X U \to X$ exists in $\ul{\mathcal{Q}}$.

\item For any monomorphism $i : K \to L$ of finite simplicial sets, the induced morphism $i \cotens_X U : L \cotens_X U \to K \cotens_X U$ is a trivial fibration in $\mathcal{C}$.
\end{enumerate}
\end{lem}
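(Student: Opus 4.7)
The plan is to construct $K \cotens_X U$ as a single simplicially enriched pullback in $\ul{\mathcal{C}}$ so that part~(i) follows essentially by definition, and then to exhibit the transition map $i \cotens_X U$ in part~(ii) as a pullback of the pushout-product $i \pbprod p$, so that axioms~C, $C_\Delta$, and~G do all the work.

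For part~(i), I first note that axiom~G makes $\id_K \cotens p : K \cotens U \to K \cotens X$ a trivial fibration (hence a fibration) because $p$ is one. Write $\delta_K : X \to K \cotens X$ for the morphism induced by $K \to \Delta^0$ via contravariant functoriality of the cotensor and the canonical isomorphism $\Delta^0 \cotens X \cong X$. Define $K \cotens_X U$ as the simplicially enriched pullback (which exists by axiom~$C_\Delta$)
\[
\begin{tikzcd}
K \cotens_X U \rar \dar[swap]{K \cotens_X p} & K \cotens U \dar{\id_K \cotens p} \\
X \rar[swap]{\delta_K} & K \cotens X
\end{tikzcd}
\]
A routine hom-space calculation, using that cotensors commute with limits in the second variable together with the standard description of the enriched hom in $\overcat{\ul{\mathcal{C}}}{X}$ as an enriched fibre, shows that $K \cotens_X p$ is the cotensor of $p$ by $K$ in $\overcat{\ul{\mathcal{C}}}{X}$. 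Axiom~C then ensures $K \cotens_X p$ is a trivial fibration, being the pullback of $\id_K \cotens p$, so it lies in $\ul{\mathcal{Q}}$ as required.

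For part~(ii), set $W = (K \cotens U) \times_{K \cotens X} (L \cotens X)$; naturality of the cotensors makes $i \pbprod p : L \cotens U \to W$ the canonical comparison map into this pullback, and axiom~G (applied to the mono $i$ and the trivial fibration $p$) tells us it is a trivial fibration. The key compatibility is $(i \cotens \id_X) \circ \delta_L = \delta_K$, which holds because both sides are induced by the composite $K \hookrightarrow L \to \Delta^0$, which equals $K \to \Delta^0$. Two applications of pullback pasting then identify $X \times_{L \cotens X} W \cong K \cotens_X U$ and $K \cotens_X U \times_W L \cotens U \cong L \cotens_X U$, so $i \cotens_X U : L \cotens_X U \to K \cotens_X U$ is a pullback of $i \pbprod p$ along the natural map $K \cotens_X U \to W$, and hence is a trivial fibration by axiom~C.

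The only step demanding any care is the diagram chase in part~(ii) identifying $i \cotens_X U$ as a pullback of $i \pbprod p$; once the compatibility $\delta_K = (i \cotens \id_X) \circ \delta_L$ is in hand, the two pullback-pasting identifications are entirely mechanical. Everything else, including the enriched universal property in part~(i), is routine bookkeeping with the definitions.
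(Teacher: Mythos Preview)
Your proof is correct and follows essentially the same approach as the paper: for (i) you construct $K \cotens_X U$ as the pullback of $\id_K \cotens p$ along the diagonal $\delta_K$ and invoke axioms~G and~C exactly as the paper does, and for (ii) you identify $i \cotens_X U$ as a pullback of $i \pbprod p$ via the same pullback-pasting argument the paper packages into a single large diagram. The only cosmetic difference is that the paper draws the composite pullback rectangle explicitly, whereas you describe the two pasting steps in words.
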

\begin{proof}
(i). Define the object $K \cotens_X p : K \cotens_X U \to X$ in $\overcat{\mathcal{C}}{X}$ by the following pullback diagram in $\mathcal{C}$,
\[
\begin{tikzcd}
K \cotens_X U \dar[swap]{K \cotens_X p} \rar &
K \cotens U \dar{\id_K \cotens p} \\
X \rar &
K \cotens X
\end{tikzcd}
\]
where the bottom arrow is the morphism induced by the unique morphism $K \to \Delta^0$. By axiom G, $\id_K \cotens p : K \cotens U \to K \cotens X$ is a trivial fibration in $\mathcal{C}$, so by axiom C, $K \cotens_X p : K \cotens_X U \to X$ is also a trivial fibration in $\mathcal{C}$, hence is an object in $\mathcal{Q}$. It is straightforward to verify that $K \cotens_X p$ has the required simplicially enriched universal property in $\ul{\mathcal{Q}}$.

\bigskip\noindent
(ii). By axiom G, we have a trivial fibration
\[
i \pbprod p : L \cotens U \to \parens{K \cotens U} \times_{K \cotens X} \parens{L \cotens X}
\]
induced by the commutative diagram in $\mathcal{C}$ shown below:
\[
\begin{tikzcd}
L \cotens U \dar[swap]{i \cotens \id_U} \rar{\id_L \cotens p} &
L \cotens X \dar{i \cotens \id_X} \\
K \cotens U \rar[swap]{\id_K \cotens p} &
K \cotens X
\end{tikzcd}
\]
Moreover, by the pullback pasting lemma, we have the following commutative diagram in $\mathcal{C}$,
\[
\begin{tikzcd}
L \cotens_X U \dar[swap]{i \cotens_X U} \rar &
L \cotens U \dar{i \pbprod p} \drar{i \cotens \id_U} \\
K \cotens_X U \dar[swap]{Z \cotens_X p} \rar &
\parens{K \cotens U} \times_{K \cotens X} \parens{L \cotens X} \dar \rar &
K \cotens U \dar{\id_K \cotens p} \\
X \rar &
L \cotens X \rar[swap]{i \cotens \id_X} &
K \cotens X
\end{tikzcd}
\]
where every square and rectangle is a pullback diagram in $\mathcal{C}$. Thus, by axiom C, $i \cotens_X U : L \cotens_X U \to K \cotens_X U$ is indeed a trivial fibration in $\mathcal{C}$.
\end{proof}

\begin{lem}
\label{lem:category.of.trivial.fibrations.is.homotopy.cofiltered}
With notation as in \autoref{lem:category.of.trivial.fibrations.is.cotensored}:
\begin{enumerate}[(i)]
\item $\ul{\mathcal{Q}}$ has simplicially enriched finite products.

\item Given any monomorphism $i : K \to L$ of finite simplicial sets and any pair $\tuple{p', p}$ of objects in $\mathcal{Q}$, for each morphism $f : K \to \ulHom[\mathcal{Q}]{p'}{p}$, there exist a morphism $v : p'' \to p'$ in $\mathcal{Q}$ and a morphism $g : L \to \ulHom[\mathcal{Q}]{p''}{p}$ making the following diagram commute:
\[
\begin{tikzcd}
K \dar[swap]{i} \rar{f} &
\ulHom[\mathcal{Q}]{p'}{p} \dar{\ulHom[\mathcal{Q}]{v}{p}} \\
L \rar[swap]{g} &
\ulHom[\mathcal{Q}]{p''}{p}
\end{tikzcd}
\]
\end{enumerate}
\end{lem}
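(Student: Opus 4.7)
The plan is to dispatch part (i) quickly using the ambient simplicially enriched pullbacks and then to handle part (ii) by transposing $f$ along the cotensor-hom adjunction established in \autoref{lem:category.of.trivial.fibrations.is.cotensored} and solving for the required data by a pullback construction.

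For part (i), the terminal object of $\ul{\mathcal{Q}}$ will be $\id_X : X \to X$, which is a trivial fibration since every isomorphism lies in both $\mathcal{W}$ (by axiom A) and $\mathcal{F}$ (by axiom B). Given objects $p_1 : U_1 \to X$ and $p_2 : U_2 \to X$ of $\mathcal{Q}$, the simplicially enriched pullback $U_1 \times_X U_2$ exists by axiom~C$\sb{\Delta}$ (applied to the fibration $p_2$). Axiom~C then ensures that the projection $U_1 \times_X U_2 \to U_1$ is a trivial fibration, and composing with $p_1$ yields a trivial fibration over $X$, giving the desired product in $\ul{\mathcal{Q}}$. The simplicial universal property in $\ul{\mathcal{Q}}$ is inherited from $\overcat{\ul{\mathcal{C}}}{X}$ because $\ul{\mathcal{Q}}$ is a full simplicial subcategory.

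For part (ii), I first transpose $f : K \to \ulHom[\mathcal{Q}]{p'}{p}$ along the cotensor-hom adjunction of \autoref{lem:category.of.trivial.fibrations.is.cotensored}~(i) to a morphism $\bar f : U' \to K \cotens_X U$ over $X$. By \autoref{lem:category.of.trivial.fibrations.is.cotensored}~(ii), the induced morphism $i \cotens_X U : L \cotens_X U \to K \cotens_X U$ is a trivial fibration, so axiom~C$\sb{\Delta}$ permits the formation of the simplicially enriched pullback
\[
\begin{tikzcd}
U'' \dar[swap]{v} \rar{\bar g} &
L \cotens_X U \dar{i \cotens_X U} \\
U' \rar[swap]{\bar f} &
K \cotens_X U
\end{tikzcd}
\]
in $\ul{\mathcal{C}}$. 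By axiom~C, the pulled-back morphism $v : U'' \to U'$ is again a trivial fibration, so $p'' := p' \circ v : U'' \to X$ is a trivial fibration (axioms A and B), defining an object of $\mathcal{Q}$ and a morphism $v : p'' \to p'$ in $\mathcal{Q}$. Transposing $\bar g$ back along the cotensor-hom adjunction yields the required $g : L \to \ulHom[\mathcal{Q}]{p''}{p}$, and the commutativity of the pullback square translates directly into the commutativity of the required diagram in $\cat{\SSet}$.

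There is no serious obstacle here: the whole argument hinges on the two facts from \autoref{lem:category.of.trivial.fibrations.is.cotensored}, namely that cotensors $K \cotens_X p$ exist and that $i \cotens_X U$ is a trivial fibration. Axiom~C$\sb{\Delta}$ ensures that all the pullbacks used above are simplicially enriched, so that the adjunction transpositions legitimately preserve commutativity; the rest is a formal pullback construction inside the category of trivial fibrations over $X$.
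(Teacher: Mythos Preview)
Your proof is correct and follows essentially the same approach as the paper: both parts are handled identically, with (i) reduced to axioms~C and~C$\sb{\Delta}$ for the simplicially enriched pullback, and (ii) obtained by transposing $f$ through the cotensor adjunction of \autoref{lem:category.of.trivial.fibrations.is.cotensored}, pulling back along the trivial fibration $i \cotens_X U$, and transposing back. Your write-up is in fact slightly more explicit than the paper's, which simply says the relevant pullback can be formed ``in $\mathcal{Q}$'' without spelling out why $p''$ lands there.
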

\begin{proof}
(i). It is clear that $\ul{\mathcal{Q}}$ has a simplicially enriched terminal object, and the existence of simplicially enriched binary products is an immediate consequence of axioms C and C$\sb{\Delta}$.

\bigskip\noindent
(ii). By \autoref{lem:category.of.trivial.fibrations.is.cotensored}, $f : K \to \ulHom[\mathcal{Q}]{p'}{p}$ corresponds to a morphism $\tilde{f} : p' \to K \cotens_X p$ in $\mathcal{Q}$, and by axiom C, we may form the following pullback diagram in $\mathcal{Q}$,
\[
\begin{tikzcd}
p'' \dar[swap]{v} \rar{\tilde{g}} &
L \cotens_X p \dar{i \cotens_X p} \\
p' \rar[swap]{\tilde{f}} &
K \cotens_X p
\end{tikzcd}
\]
where (the underlying morphism of) $v : p'' \to p'$ is a trivial fibration in $\mathcal{C}$. Then $\tilde{g} : p'' \to L \cotens_X p$ corresponds to a morphism $g : L \to \ulHom[\mathcal{Q}]{p''}{p}$, and it is straightforward to see that diagram in question commutes.
\end{proof}

\begin{cor}
Let $\ul{\mathcal{C}}$ be a simplicial category of fibrant objects, let $X$ be an object in $\mathcal{C}$, let $\ul{\mathcal{Q}}$ be the full subcategory of the simplicially enriched slice category $\overcat{\ul{\mathcal{C}}}{X}$ spanned by the trivial fibrations, and let $\pi_0 \argb{\ul{\mathcal{Q}}}$ be the category obtained by applying $\pi_0$ to the hom-spaces of $\ul{\mathcal{Q}}$. Then $\op{\pi_0 \argb{\ul{\mathcal{Q}}}}$ is a filtered category.
\end{cor}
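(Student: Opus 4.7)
The plan is to verify directly the three conditions defining a filtered category, applied to $\op{\pi_0\argb{\ul{\mathcal{Q}}}}$; equivalently, the three cofilteredness conditions for $\pi_0\argb{\ul{\mathcal{Q}}}$. That is, I need to check that (C1) $\pi_0\argb{\ul{\mathcal{Q}}}$ is non-empty, (C2) any pair of objects admits a common predecessor, and (C3) any parallel pair of morphisms can be equalised by some morphism on the left.

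For (C1), the identity $\id_X : X \to X$ is an isomorphism, hence a trivial fibration by axioms A and B, providing an object of $\mathcal{Q}$. For (C2), given objects $p_1, p_2$ in $\mathcal{Q}$, part (i) of \autoref{lem:category.of.trivial.fibrations.is.homotopy.cofiltered} furnishes a simplicially enriched product $p_1 \times p_2$ in $\ul{\mathcal{Q}}$; its two projections are morphisms $p_1 \times p_2 \to p_1$ and $p_1 \times p_2 \to p_2$ in $\mathcal{Q}$, which descend to morphisms in $\pi_0\argb{\ul{\mathcal{Q}}}$.

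The interesting step is (C3). Given parallel morphisms $f, g : p' \to p$ in $\mathcal{Q}$ whose classes I want to equalise in $\pi_0$, I assemble them into a single morphism of simplicial sets $\tuple{f, g} : \partial \Delta^1 \to \ulHom[\mathcal{Q}]{p'}{p}$. Applying part (ii) of \autoref{lem:category.of.trivial.fibrations.is.homotopy.cofiltered} to the monomorphism $\partial \Delta^1 \embedinto \Delta^1$, I obtain a morphism $v : p'' \to p'$ in $\mathcal{Q}$ together with a $1$-simplex $h : \Delta^1 \to \ulHom[\mathcal{Q}]{p''}{p}$ whose restriction to $\partial \Delta^1$ is the pair $\tuple{f \circ v, g \circ v}$. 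But a $1$-simplex in $\ulHom[\mathcal{Q}]{p''}{p}$ joining $f \circ v$ to $g \circ v$ exhibits these two $0$-simplices as lying in the same path component, so $\argb{f \circ v} = \argb{g \circ v}$ in $\pi_0 \ulHom[\mathcal{Q}]{p''}{p}$, as required.

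The only subtlety worth flagging is notational: one must recognise that a pair of vertices of a simplicial set $K$ is the same thing as a morphism $\partial \Delta^1 \to K$, and that co-terminal vertices connected by a $1$-simplex lie in the same component (which is immediate). Everything else is a bookkeeping exercise on the two halves of the preceding lemma, so no real obstacle arises.
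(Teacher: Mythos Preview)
Your proof is correct and follows essentially the same approach as the paper. The paper is slightly more terse, dispatching (C1) and (C2) implicitly by invoking part (i) of \autoref{lem:category.of.trivial.fibrations.is.homotopy.cofiltered} (finite products in $\ul{\mathcal{Q}}$ give a terminal object and binary products, hence the same in $\pi_0\argb{\ul{\mathcal{Q}}}$), and then handling (C3) exactly as you do via the monomorphism $\partial\Delta^1 \hookrightarrow \Delta^1$ and part (ii) of that lemma.
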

\begin{proof}
Recalling \autoref{lem:category.of.trivial.fibrations.is.homotopy.cofiltered}, it suffices to show that, for any parallel pair $f_0, f_1 : p' \to p$ in $\mathcal{Q}$, there is a morphism $v : p'' \to p'$ in $\mathcal{Q}$ such that $f_0 \circ v = f_1 \circ v$ in $\pi_0 \argb{\ul{\mathcal{Q}}}$. But $\tuple{f_0, f_1}$ define a morphism $f : \partial \Delta^1 \to \ulHom[\mathcal{Q}]{p'}{p}$, so the lemma implies there exist a morphism $v : p'' \to p'$ in $\mathcal{Q}$ and a morphism $g : \Delta^1 \to \ulHom[\mathcal{Q}]{p''}{p}$ making the diagram below commute,
\[
\begin{tikzcd}
\partial \Delta^1 \dar[hookrightarrow] \rar{f} &
\ulHom[\mathcal{Q}]{p'}{p} \dar{\ulHom[\mathcal{Q}]{v}{p}} \\
\Delta^1 \rar[swap]{g} &
\ulHom[\mathcal{Q}]{p''}{p}
\end{tikzcd}
\]
so we indeed have $f_0 \circ v = f_1 \circ v$ in $\pi_0 \ulHom[\mathcal{Q}]{p''}{p}$.
\end{proof}

The next result may be regarded as a homotopical version of Theorem 1 in \citep{Brown:1973}, which describes the hom-sets in the homotopy category of a category of fibrant objects. Indeed, we will derive a closely related result as a corollary.

\begin{thm}
\label{thm:hocolim.formula.for.hammock.spaces.for.simplicial.categories.of.fibrant.objects}
Let $\ul{\mathcal{C}}$ be a simplicial category of fibrant objects, let $\ul{\LH \mathcal{C}}$ be the hammock localisation, let $X$ be an object in $\mathcal{C}$, let $\ul{\mathcal{Q}}$ be the full subcategory of the simplicially enriched slice category $\overcat{\ul{\mathcal{C}}}{X}$ spanned by the trivial fibrations, and let $\ul{U} : \ul{\mathcal{Q}} \to \ul{\mathcal{C}}$ be the evident projection. Then,
\[
\textstyle \hocolim[\op{\ul{\mathcal{Q}}}] \ulHom[\mathcal{C}]{\ul{U}}{\blank} \simeq \ulHom[\LH \mathcal{C}]{X}{\blank}
\]
by a zigzag of weak equivalences of functors $\mathcal{C} \to \cat{\SSet}$. In particular,
\[
\textstyle \hocolim[\op{\ul{\mathcal{Q}}}] \ulHom[\mathcal{C}]{\ul{U}}{\blank} : \mathcal{C} \to \cat{\SSet}
\]
preserves weak equivalences.
\end{thm}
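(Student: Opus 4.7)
The plan is to chain together three natural weak equivalences. First, identify the underlying ordinary category of $\mathcal{V}$-cocycles (for $\mathcal{V}$ the class of trivial fibrations) with an oplax colimit, enabling an application of Thomason's theorem. Second, pass from the ordinary to the simplicially enriched homotopy colimit using the cotensor products built in \autoref{lem:category.of.trivial.fibrations.is.cotensored}. Third, invoke the cocycle calculus of \autoref{thm:fundamental.theorem.of.homotopical.calculi.of.cocycles}, which is applicable because $\mathcal{C}$ has functorial path objects by \autoref{prop:simplicial.categories.of.fibrant.objects.have.functorial.path.objects} and hence (by \autoref{thm:categories.of.fibrant.objects.with.functorial.path.objects.admit.a.homotopical.calculus.of.cocycles}) admits a homotopical calculus of cocycles with $\mathcal{V}$ the class of trivial fibrations.

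For step one, there is a natural isomorphism of ordinary categories
\[
\textstyle \oplaxcolim[\op{\mathcal{Q}}] \Hom[\mathcal{C}]{\ul{U}(\blank)}{Y} \cong \Cocyc[\mathcal{C}][\mathcal{V}]{X}{Y}.
\]
Indeed, on either side an object is a pair $\tuple{p, f}$ consisting of a trivial fibration $p : U \to X$ (\ie an object of $\mathcal{Q}$) and a morphism $f : U \to Y$ in $\mathcal{C}$, and on either side a morphism $\tuple{p', f'} \to \tuple{p, f}$ is a map $v : p' \to p$ in $\mathcal{Q}$ satisfying $f \circ v = f'$, which is automatically a weak equivalence by the 2-out-of-3 property. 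Since the nerve of a set-valued diagram is dimensionwise discrete, \autoref{thm:Thomason.hocolim} then supplies a natural weak homotopy equivalence
\[
\textstyle \hocolim[\op{\mathcal{Q}}] \Hom[\mathcal{C}]{\ul{U}(\blank)}{Y} \to \nv{\Cocyc[\mathcal{C}][\mathcal{V}]{X}{Y}}.
\]

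Step two is exactly \autoref{lem:tensors.and.homotopy.cofinality}: \autoref{lem:category.of.trivial.fibrations.is.cotensored} endows $\ul{\mathcal{Q}}$ with the cotensors with $\Delta^m$ required by the hypothesis, whence the canonical comparison map $\hocolim[\op{\mathcal{Q}}] \Hom[\mathcal{C}]{\ul{U}(\blank)}{Y} \to \hocolim[\op{\ul{\mathcal{Q}}}] \ulHom[\mathcal{C}]{\ul{U}(\blank)}{Y}$ is a natural weak homotopy equivalence. Step three is \autoref{thm:fundamental.theorem.of.homotopical.calculi.of.cocycles}(i), providing a weak homotopy equivalence $\nv{\Cocyc[\mathcal{C}][\mathcal{V}]{X}{Y}} \to \ulHom[\LH \mathcal{C}]{X}{Y}$ strictly natural in $Y$. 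Splicing the three zigzags together gives the required zigzag of weak equivalences of functors $\mathcal{C} \to \cat{\SSet}$.

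The `in particular' clause is then immediate: $\ulHom[\LH \mathcal{C}]{X}{\blank}$ sends weak equivalences in $\mathcal{C}$ to weak homotopy equivalences of simplicial sets (a basic property of the hammock localisation, also invoked in \autoref{cor:action.of.weak.equivalences.on.two-arrow.zigzags}), so this property is transported back along the zigzag to $\hocolim[\op{\ul{\mathcal{Q}}}] \ulHom[\mathcal{C}]{\ul{U}(\blank)}{\blank}$. The one genuinely delicate step is the second: the simplicial enrichment of $\mathcal{C}$ has to be used in an essential way to compare the unenriched and enriched homotopy colimits, but \autoref{lem:category.of.trivial.fibrations.is.cotensored} is precisely tailored to produce the cotensor structure on $\ul{\mathcal{Q}}$ demanded by \autoref{lem:tensors.and.homotopy.cofinality}, so once that lemma is in hand the remainder of the argument is formal.
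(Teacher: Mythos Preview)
Your overall architecture matches the paper's, but there is a genuine gap between your step one and your step two. In step one you correctly use Thomason's theorem to get a weak equivalence
\[
\textstyle \hocolim[\op{\mathcal{Q}}] \discr \Hom[\mathcal{C}]{U}{Y} \simeq \nv{\Cocyc[\mathcal{C}][\mathcal{V}]{X}{Y}},
\]
where $\Hom$ is the \emph{ordinary} hom-set (regarded as a discrete simplicial set). In step two you invoke \autoref{lem:tensors.and.homotopy.cofinality}, but that lemma only changes the enrichment on the indexing category, not the diagram: it gives
\[
\textstyle \hocolim[\op{\mathcal{Q}}] \ulHom[\mathcal{C}]{U}{Y} \simeq \hocolim[\op{\ul{\mathcal{Q}}}] \ulHom[\mathcal{C}]{\ul{U}}{Y},
\]
with the \emph{simplicial} hom $\ulHom$ on both sides. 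You have silently replaced $\discr \Hom$ by $\ulHom$ on the left, and this is not automatic: the levelwise comparison $\discr \Hom[\mathcal{C}]{U}{Y} \to \ulHom[\mathcal{C}]{U}{Y}$ is not a weak equivalence in general.

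This is exactly the step the paper isolates. The paper bridges the gap by writing $\ulHom[\mathcal{C}]{U}{Y}_m \cong \Hom[\mathcal{C}]{U}{\Delta^m \cotens Y}$, applying your step-one equivalence for each $m$, and then taking $\hocolim$ over $\op{\cat{\Simplex}}$. On one side the Bousfield--Kan theorem collapses $\hocolim_{\op{\cat{\Simplex}}} \discr \Hom[\mathcal{C}]{U}{\Delta^\bullet \cotens Y}$ to $\ulHom[\mathcal{C}]{U}{Y}$; on the other side \autoref{cor:action.of.weak.equivalences.on.two-arrow.zigzags} (applied to the weak equivalences $Y \to \Delta^m \cotens Y$) shows that $[m] \mapsto \nv{\Cocyc[\mathcal{C}][\mathcal{V}]{X}{\Delta^m \cotens Y}}$ is homotopically constant, so its $\hocolim$ over $\op{\cat{\Simplex}}$ is again $\nv{\Cocyc[\mathcal{C}][\mathcal{V}]{X}{Y}}$. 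Interchanging the two homotopy colimits completes the link. Your remark that ``the one genuinely delicate step is the second'' is right in spirit but mislocates the difficulty: the cotensors on $\ul{\mathcal{Q}}$ from \autoref{lem:category.of.trivial.fibrations.is.cotensored} handle the enriched/unenriched comparison, but the discrete/simplicial comparison over the \emph{unenriched} $\mathcal{Q}$ requires this extra Bousfield--Kan argument.
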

\begin{proof}
Let $\mathcal{Q}$ be the underlying ordinary category of $\ul{\mathcal{Q}}$. By lemmas~\ref{lem:tensors.and.homotopy.cofinality} and~\ref{lem:category.of.trivial.fibrations.is.cotensored},
\[
\textstyle \hocolim[\op{\mathcal{Q}}] \ulHom[\mathcal{C}]{U}{\blank} \simeq \hocolim[\op{\ul{\mathcal{Q}}}] \ulHom[\mathcal{C}]{\ul{U}}{\blank} 
\]
so it suffices to verify the following: 
\[
\textstyle \hocolim[\op{\mathcal{Q}}] \ulHom[\mathcal{C}]{U}{\blank} \simeq \ulHom[\LH \mathcal{C}]{X}{\blank}
\]
Moreover, recalling \autoref{thm:categories.of.fibrant.objects.with.functorial.path.objects.admit.a.homotopical.calculus.of.cocycles} and \autoref{prop:simplicial.categories.of.fibrant.objects.have.functorial.path.objects}, we may apply \autoref{thm:fundamental.theorem.of.homotopical.two-arrow.calculi} and \autoref{lem:replacing.cocycles.with.special.cocycles} to reduce the problem to showing that
\[
\textstyle \hocolim[\op{\mathcal{Q}}] \ulHom[\mathcal{C}]{U}{\blank} \simeq \nv{\Cocyc[\mathcal{C}][\mathcal{V}]{X}{\blank}}
\]
by a zigzag of weak equivalences of functors. By using Thomason's homotopy colimit theorem (\ref{thm:Thomason.hocolim}), it is not hard to see that there is a weak equivalence 
\[
\textstyle \hocolim[\op{\mathcal{Q}}] \discr \Hom[\mathcal{C}]{U}{\blank} \simeq \nv{\Cocyc[\mathcal{C}][\mathcal{V}]{X}{\blank}}
\]
of functors $\mathcal{C} \to \cat{\SSet}$, where on the LHS we have the \emph{ordinary} hom-functor. In particular,
\[
\textstyle \hocolim[\op{\cat{\Simplex}}] \hocolim[\op{\mathcal{Q}}] \discr \Hom[\mathcal{C}]{U}{\Delta^{\bullet} \cotens \pblank} \simeq \hocolim[\op{\cat{\Simplex}}] \nv{\Cocyc[\mathcal{C}][\mathcal{V}]{X}{\Delta^{\bullet} \cotens \pblank}}
\]
but on the one hand, by \autoref{cor:action.of.weak.equivalences.on.two-arrow.zigzags},
\[
\textstyle \nv{\Cocyc[\mathcal{C}][\mathcal{V}]{X}{\blank}} \simeq \hocolim[\op{\cat{\Simplex}}] \nv{\Cocyc[\mathcal{C}][\mathcal{V}]{X}{\Delta^{\bullet} \cotens \pblank}}
\]
and on the other hand, by the Bousfield--Kan theorem,\footnote{See paragraph 4.3 in \citep[\Chap XII]{Bousfield-Kan:1972} or Theorem 18.7.4 in \citep{Hirschhorn:2003}.}
\[
\textstyle \hocolim[\op{\cat{\Simplex}}] \discr \Hom[\mathcal{C}]{U}{\Delta^{\bullet} \cotens \blank} \simeq \ulHom[\mathcal{C}]{U}{\blank} 
\]
so by interchanging homotopy colimits, the claim follows.
\end{proof}

\begin{cor}
With notation as above,
\[
\textstyle \indlim_{\op{\pi_0 \argb{\ul{\mathcal{Q}}}}} \pi_0 \ulHom[\mathcal{C}]{\ul{U}}{\blank} \cong \Hom[\Ho \mathcal{C}]{X}{\blank}
\]
as functors $\mathcal{C} \to \cat{\Set}$.
\end{cor}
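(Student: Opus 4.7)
The plan is to apply $\pi_0$ pointwise to the zigzag of weak equivalences established in \autoref{thm:hocolim.formula.for.hammock.spaces.for.simplicial.categories.of.fibrant.objects}. The right-hand side becomes $\pi_0 \ulHom[\LH \mathcal{C}]{X}{\blank} \cong \Hom[\Ho \mathcal{C}]{X}{\blank}$, which is the defining property of the hammock localisation: its $\pi_0$ recovers the Gabriel--Zisman localisation.

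For the left-hand side I would first invoke \autoref{lem:tensors.and.homotopy.cofinality} to replace $\hocolim[\op{\ul{\mathcal{Q}}}]$ by the Bousfield--Kan homotopy colimit over the underlying ordinary category $\op{\mathcal{Q}}$, and then use that $\pi_0 : \cat{\SSet} \to \cat{\Set}$ is a left adjoint that inverts weak equivalences, hence commutes with homotopy colimits, yielding
\[
\pi_0 \hocolim[\op{\mathcal{Q}}] \ulHom[\mathcal{C}]{U}{\blank} \cong \indlim_{\op{\mathcal{Q}}} \pi_0 \ulHom[\mathcal{C}]{U}{\blank}.
\]
For a more concrete justification, one can read off directly from the Bousfield--Kan formula that a $0$-simplex of the hocolim is a pair $(p, x)$ with $x : U(p) \to Y$, and the equivalence relation defining $\pi_0$ identifies $(p, x)$ with $(p', x \circ U(\alpha))$ for any $\alpha : p' \to p$ in $\mathcal{Q}$, together with path-connecting simplicially homotopic $x$'s inside each $\ulHom[\mathcal{C}]{U(p)}{Y}$; this matches exactly the description of $\indlim_{\op{\mathcal{Q}}} \pi_0 \ulHom[\mathcal{C}]{U}{\blank}$ as a quotient of $\coprod_{p} \pi_0 \ulHom[\mathcal{C}]{U(p)}{Y}$.

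Finally, the functor $\pi_0 \ulHom[\mathcal{C}]{U(\blank)}{Y} : \op{\mathcal{Q}} \to \cat{\Set}$ sends simplicially homotopic parallel morphisms in $\ul{\mathcal{Q}}$ to the same map, so it factors through the identity-on-objects, surjective-on-hom-sets projection $\op{\mathcal{Q}} \to \op{\pi_0 \argb{\ul{\mathcal{Q}}}}$. Since such a projection does not alter colimits in $\cat{\Set}$, I obtain
\[
\indlim_{\op{\mathcal{Q}}} \pi_0 \ulHom[\mathcal{C}]{U}{\blank} \cong \indlim_{\op{\pi_0 \argb{\ul{\mathcal{Q}}}}} \pi_0 \ulHom[\mathcal{C}]{\ul{U}}{\blank},
\]
and the resulting chain of isomorphisms, together with the naturality statement of \autoref{thm:fundamental.theorem.of.homotopical.calculi.of.cocycles}, gives the claim as functors $\mathcal{C} \to \cat{\Set}$. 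The main conceptual step is the preservation of homotopy colimits by $\pi_0$; everything else is bookkeeping. Filteredness of $\op{\pi_0 \argb{\ul{\mathcal{Q}}}}$ from the preceding corollary is not needed for the derivation itself, but it ensures the target is recognisable as an honest filtered colimit of hom-sets.
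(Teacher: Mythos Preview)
Your argument is correct and follows essentially the same strategy as the paper: apply $\pi_0$ to the weak equivalence of \autoref{thm:hocolim.formula.for.hammock.spaces.for.simplicial.categories.of.fibrant.objects}, use that $\pi_0$ preserves homotopy colimits, and then pass from the indexing category to $\pi_0\argb{\ul{\mathcal{Q}}}$. The only cosmetic difference is that the paper stays in the simplicially enriched setting throughout (invoking that $\pi_0$ is a simplicially enriched left Quillen functor, so that the enriched homotopy colimit becomes an enriched colimit in $\cat{\Set}$, which is then identified with the ordinary colimit over $\pi_0\argb{\ul{\mathcal{Q}}}$), whereas you first descend to the underlying ordinary category $\mathcal{Q}$ via \autoref{lem:tensors.and.homotopy.cofinality} and only afterwards pass to $\pi_0\argb{\ul{\mathcal{Q}}}$; both routes reach the same destination by the same mechanism.
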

\begin{proof}
Since $\pi_0 : \cat{\SSet} \to \cat{\Set}$ is a simplicially enriched left Quillen functor, it takes homotopy colimits in $\cat{\SSet}$ to homotopy colimits in $\cat{\Set}$. Homotopy colimits in $\cat{\Set}$ are the same as (simplicially enriched) colimits, thus,
\[
\textstyle \indlim_{\op{\ul{\mathcal{Q}}}} \pi_0 \ulHom[\mathcal{C}]{\ul{U}}{\blank} \cong \pi_0 \ulHom[\LH \mathcal{C}]{X}{\blank} \cong \Hom[\Ho \mathcal{C}]{X}{\blank}
\]
But the evident localising functor $\ul{\mathcal{Q}} \to \pi_0 \argb{\ul{\mathcal{Q}}}$ induces an equivalence between the category of simplicially enriched diagrams $\op{\ul{\mathcal{Q}}} \to \cat{\Set}$ and the category of (ordinary) diagrams $\op{\pi_0 \argb{\ul{\mathcal{Q}}}} \to \cat{\Set}$, so 
\[
\textstyle \indlim_{\op{\pi_0 \argb{\ul{\mathcal{Q}}}}} \pi_0 \ulHom[\mathcal{C}]{\ul{U}}{\blank} \cong \indlim_{\op{\ul{\mathcal{Q}}}} \pi_0 \ulHom[\mathcal{C}]{\ul{U}}{\blank}
\]
and we are done.
\end{proof}

\section{The Verdier hypercovering theorem}

Throughout this section, let $\mathcal{C}$ be a small category with a Grothendieck topology $J$, let $\mathcal{M}$ be the category of simplicial presheaves on $\mathcal{C}$, equipped with the $J$-local model structure of \citet{Joyal-to-Grothendieck} and \citet{Jardine:1987}, and for each regular cardinal $\kappa$, let $\mathcal{M}_{< \kappa}$ be the full subcategory of $\kappa$-presentable objects in $\mathcal{M}$.

\begin{prop}
\label{prop:model.category.of.small.simplicial.presheaves}
There are arbitrarily large regular cardinals $\kappa$ such that $\mathcal{M}_{< \kappa}$ inherits the structure of a simplicial closed model category from $\mathcal{M}$, including functorial factorisations.
\end{prop}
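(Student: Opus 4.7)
The plan is to exploit that $\mathcal{M}$ is a combinatorial simplicial model category, and then apply a standard accessibility argument to produce arbitrarily large regular cardinals $\kappa$ for which the combinatorial data of the model structure all fits inside $\mathcal{M}_{<\kappa}$. First I would recall that $\mathcal{M}$, being the category of simplicial presheaves on a small category, is locally finitely presentable, and that the $J$-local model structure is obtained by left Bousfield localisation of the injective model structure at a small set of hypercover (or local-acyclicity) morphisms. In particular, it is cofibrantly generated by small sets $I$ and $I'$ of generating cofibrations and generating trivial cofibrations whose domains and codomains are all $\lambda_0$-presentable for some fixed regular cardinal $\lambda_0$, and Quillen's small object argument produces functorial factorisations on $\mathcal{M}$.

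The key step is an accessibility lemma asserting that the class of regular cardinals $\kappa \geq \lambda_0$ with the following two properties is unbounded: (a) the subcategory $\mathcal{M}_{<\kappa}$ is closed in $\mathcal{M}$ under finite limits, finite colimits, and cotensors and tensors by finite simplicial sets; and (b) the functorial factorisations on $\mathcal{M}$ produced by the small object argument restrict to $\mathcal{M}_{<\kappa}$, in the sense that a morphism between $\kappa$-presentable objects is factored through a $\kappa$-presentable middle object. Property (a) holds once $\kappa$ is uncountable and large enough that the cotensor bifunctor $\pblank \cotens \pblank$ preserves $\kappa$-presentability in each variable separately, which is a cofinal condition because that bifunctor is accessible. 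Property (b) follows because, for $\kappa$ strictly larger than $\lambda_0$ and than the cardinality of $I \cup I'$, the small object argument runs as a transfinite composition of pushouts of generating cells indexed by an ordinal of length less than $\kappa$, and such $\kappa$-small colimits of $\kappa$-presentable objects are themselves $\kappa$-presentable.

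Given such a $\kappa$, the model structure restricts to $\mathcal{M}_{<\kappa}$ by declaring the weak equivalences, cofibrations, and fibrations to be those morphisms in $\mathcal{M}_{<\kappa}$ that are so in $\mathcal{M}$. The two-out-of-three, retract, and lifting axioms pass verbatim because $\mathcal{M}_{<\kappa}$ is a full subcategory of $\mathcal{M}$; functorial factorisations exist by (b); and the simplicial enrichment of $\mathcal{M}$ restricts to $\mathcal{M}_{<\kappa}$ thanks to (a), which in particular ensures that cotensors by finite simplicial sets stay inside $\mathcal{M}_{<\kappa}$, which is all that is required for the axioms of a simplicial category of fibrant objects introduced in Section~5.

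The hard part will be the uniform accessibility statement (b): one must confirm that a single $\kappa$ can be chosen to bound simultaneously the presentability rank of all generating cells and the number of stages needed for the small object argument to terminate, and that both are stable under the relevant $\kappa$-small transfinite compositions. This is a standard but delicate fact in the theory of combinatorial model categories (in the sense of Smith), whose proof relies on the Adámek--Ros\-ick\'y characterisation of accessible categories, together with the observation that once the relevant cardinals are above a common threshold, closure of the $\kappa$-presentable objects under the pushouts and transfinite compositions arising in the small object argument is automatic. No new idea beyond careful bookkeeping is required.
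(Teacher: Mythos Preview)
Your approach is correct and is essentially what lies behind the paper's proof, which consists entirely of a citation: ``Use Propositions~1.17, 5.9, and~5.20 in \citep{Low:2014a}.'' Those propositions carry out precisely the accessibility bookkeeping you sketch --- choosing $\kappa$ sharply larger than the presentability rank of the generating (trivial) cofibrations so that $\mathcal{M}_{<\kappa}$ is closed under finite limits, $\kappa$-small colimits, and the functorial factorisations produced by the small object argument. So you have reconstructed the content of the reference rather than diverged from it.

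One small point of confusion to tidy up: at the end you justify the simplicial structure by saying that closure under cotensors by \emph{finite} simplicial sets ``is all that is required for the axioms of a simplicial category of fibrant objects introduced in Section~5.'' But the proposition asserts that $\mathcal{M}_{<\kappa}$ is a \emph{simplicial closed model category}, which is a stronger statement than what Section~5 needs. For that you want closure under tensors and cotensors by all $\kappa$-small simplicial sets (not just finite ones), which your argument in fact gives once $\kappa$ is chosen as you describe; alternatively, one interprets the simplicial enrichment relative to $\cat{\SSet}_{<\kappa}$. Either way the fix is cosmetic, but as written the last paragraph slightly undersells what you have actually shown and conflates the target of this proposition with its downstream application.
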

\begin{proof}
Use Propositions~1.17, 5.9, and~5.20 in \citep{Low:2014a}.
\end{proof}

Recall also the notion of a $J$-local fibration of simplicial presheaves on $\mathcal{C}$: in the case where $\tuple{\mathcal{C}, J}$ is a site with enough points, a morphism of simplicial presheaves on $\mathcal{C}$ is a $J$-local fibration if and only if all its stalks are Kan fibrations. Let $\mathcal{E}$ be the full subcategory of $\mathcal{M}$ spanned by the $J$-locally fibrant simplicial presheaves on $\mathcal{C}$ and let $\mathcal{E}_{< \kappa} = \mathcal{E} \cap \mathcal{M}_{< \kappa}$.

\begin{prop}
\label{prop:category.of.small.locally.fibrant.simplicial.presheaves}
There are arbitrarily large regular cardinals $\kappa$ such that $\ul{\mathcal{E}_{< \kappa}}$ is a simplicial category of fibrant objects, with weak equivalences being the $J$-local weak equivalences and fibrations being the $J$-local fibrations.
\end{prop}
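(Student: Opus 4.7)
The plan is to reduce the statement to the existing simplicial model structure on $\mathcal{M}_{<\kappa}$ provided by \autoref{prop:model.category.of.small.simplicial.presheaves}, and then check that $\mathcal{E}_{<\kappa}$ carves out a well-behaved full subcategory. Concretely, I would first invoke \autoref{prop:model.category.of.small.simplicial.presheaves} to fix a regular cardinal $\kappa$ such that $\ul{\mathcal{M}_{<\kappa}}$ inherits the structure of a simplicial closed model category from $\ul{\mathcal{M}}$. Since $\mathcal{E}_{<\kappa} = \mathcal{E} \cap \mathcal{M}_{<\kappa}$ is by definition the full subcategory of $J$-locally fibrant objects of $\mathcal{M}_{<\kappa}$, it is a standard observation that the full subcategory of fibrant objects in a simplicial closed model category, equipped with the restricted classes of weak equivalences and fibrations, satisfies the axioms of a simplicial category of fibrant objects; I would cite the corresponding example given immediately before this section.

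Next I would verify, one by one, that the closure properties needed for axioms A, B, C, C$_\Delta$, E, F, and G transfer from $\ul{\mathcal{M}_{<\kappa}}$ to the full subcategory $\ul{\mathcal{E}_{<\kappa}}$. Axioms A, B, and E are immediate: A and B only concern the abstract classes of weak equivalences and fibrations, which are inherited by restriction; E just says every object $X$ in $\mathcal{E}_{<\kappa}$ has $X \to 1$ a $J$-local fibration, which is the definition of $\mathcal{E}$. Axioms C and C$_\Delta$ follow from two observations: first, that $J$-local fibrations are closed under pullback in $\mathcal{M}$, so a pullback in $\mathcal{M}$ along a fibration between locally fibrant objects is itself locally fibrant; and second, that $\kappa$-presentable objects are closed under finite limits in $\mathcal{M}$ for $\kappa$ chosen as in \autoref{prop:model.category.of.small.simplicial.presheaves}, so such pullbacks remain in $\mathcal{M}_{<\kappa}$. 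The same reasoning handles the simplicially enriched pullbacks required by C$_\Delta$, since these agree with the ordinary underlying pullbacks.

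For axioms F and G, I would argue as follows: for a finite simplicial set $K$ and a locally fibrant $X$ in $\mathcal{M}_{<\kappa}$, the cotensor $K \cotens X$ computed in $\ul{\mathcal{M}}$ exists; its local fibrancy follows from the simplicial model category axiom G applied to $\emptyset \embedinto K$ and $X \to 1$ in $\ul{\mathcal{M}}$, and its $\kappa$-presentability follows from the fact that $K$ is a finite colimit of representables and $\kappa$-presentable objects are closed under the finite limits involved in building cotensors with finite $K$ (enlarging $\kappa$ in \autoref{prop:model.category.of.small.simplicial.presheaves} if necessary, which we may since the statement only asks for arbitrarily large such $\kappa$). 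Axiom G is then the pushout-product axiom in $\ul{\mathcal{M}}$, which transfers to $\ul{\mathcal{E}_{<\kappa}}$ since both the construction of the pushout-product morphism and the verification that it is a (trivial) fibration take place between locally fibrant $\kappa$-presentable objects, and local fibrations and local trivial fibrations are stable under restriction.

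The main obstacle will be the bookkeeping around the cardinal $\kappa$: one has to ensure that a single $\kappa$ simultaneously makes $\mathcal{M}_{<\kappa}$ a simplicial closed model category and makes $\mathcal{E}_{<\kappa}$ closed under the finitely many enriched constructions demanded by axioms C, C$_\Delta$, F, and G. Since each closure property is satisfied for all sufficiently large $\kappa$ and the set of such $\kappa$ is unbounded, one can intersect these conditions with the unbounded class of $\kappa$ produced by \autoref{prop:model.category.of.small.simplicial.presheaves} to obtain an unbounded class of $\kappa$ for which all the required axioms hold simultaneously.
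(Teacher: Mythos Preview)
There is a genuine gap in your argument, stemming from a conflation of two distinct notions of fibration. The model structure on $\mathcal{M}$ is the Joyal--Jardine injective local model structure, whose fibrations are defined by the right lifting property against $J$-local trivial cofibrations; these are \emph{not} the same as the $J$-local fibrations (stalkwise Kan fibrations) used to define $\mathcal{E}$. Every Jardine fibration is a $J$-local fibration, but not conversely, and likewise the Jardine-fibrant objects form a proper subclass of the $J$-locally fibrant ones. Consequently, $\mathcal{E}_{<\kappa}$ is \emph{not} the full subcategory of fibrant objects in the simplicial model category $\mathcal{M}_{<\kappa}$, so the example you cite does not apply. The same confusion undermines your verification of axioms F and G: when you write ``its local fibrancy follows from the simplicial model category axiom G applied to $\emptyset \embedinto K$ and $X \to 1$'', the axiom SM7 in $\mathcal{M}$ requires $X \to 1$ to be a \emph{Jardine} fibration, which local fibrancy of $X$ does not guarantee; and the pushout-product axiom in $\mathcal{M}$ says nothing about $J$-local fibrations as such.

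The paper's proof proceeds differently and does not invoke \autoref{prop:model.category.of.small.simplicial.presheaves} at all. It takes any uncountable regular $\kappa$ with $\card{\mor \mathcal{C}} < \kappa$, checks A, B, E directly, and handles the $\kappa$-presentability issues in C, C$_\Delta$, and F by the elementary observation that under this hypothesis $\kappa$-presentability is equivalent to having $< \kappa$ elements, which is manifestly preserved by pullbacks and by cotensoring with finite simplicial sets. The substantive content is then the verification of axioms F and G for $J$-local fibrations in all of $\mathcal{E}$, which the paper carries out by reducing first to simplicial sheaves and then, via Barr's embedding theorem, to the classical SM7 axiom for simplicial sets. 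This is precisely the step your argument is missing: a direct proof that the pullback-power of a monomorphism of finite simplicial sets against a $J$-local (trivial) fibration is again a $J$-local (trivial) fibration.
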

\begin{proof}
Let $\kappa$ be any uncountable regular cardinal such that $\card{\mor \mathcal{C}} < \kappa$. It is clear that axioms A, B, and E are satisfied, and a cardinality argument can be used to verify axiom C and C$\sb{\Delta}$. (Under our hypothesis on $\kappa$, a simplicial presheaf on $\mathcal{C}$ is in $\mathcal{M}_{< \kappa}$ if and only if it has $< \kappa$ elements.) A similar argument shows that the cotensor products $K \cotens X$ are in $\mathcal{M}_{< \kappa}$ if $K$ is a finite simplicial set and $X$ is in $\mathcal{M}_{< \kappa}$, so it suffices to verify that $\mathcal{E}$ satisfies axioms F and G; for this, we may use the same method as the proof of Lemma 1.15 in \citep{Low:2014b}, \ie first reduce to the case of simplicial sheaves, and then apply Barr's embedding theorem to reduce to the case of simplicial sets, which is well known.\footnote{See \eg Theorem 3.3.1 in \citep{Hovey:1999}.}
\end{proof}

Henceforth, fix an infinite regular cardinal $\kappa$ such that $\mathcal{M}_{< \kappa}$ and $\mathcal{E}_{< \kappa}$ satisfy the conclusions of propositions~\ref{prop:model.category.of.small.simplicial.presheaves} and~\ref{prop:category.of.small.locally.fibrant.simplicial.presheaves}. 

\begin{lem}
\label{lem:right.derived.sections.functor}
Let $\Gamma : \op{\mathcal{C}} \times \mathcal{M} \to \cat{\SSet}$ be the functor defined by the following formula:
\[
\Gamma \argp{C, X} = X \argp{C}
\]
Then, for each object $C$ in $\mathcal{C}$:
\begin{enumerate}[(i)]
\item Let $\homs{C}$ be the simplicial presheaf represented by $C$. There is an isomorphism
\[
\Gamma \argp{C, \blank} \cong \ulHom[\mathcal{M}]{\homs{C}}{\blank}
\]
of functors $\mathcal{M} \to \cat{\SSet}$, where the RHS is the simplicial hom-functor.

\item $\Gamma \argp{C, \blank} : \mathcal{M} \to \cat{\SSet}$ is a right Quillen functor.  In particular, it has a total right derived functor $\totalR \Gamma \argp{C, \blank} : \Ho \mathcal{M} \to \Ho \cat{\SSet}$.

\item Let $\LH \mathcal{M}_{< \kappa}$ be the hammock localisation of $\mathcal{M}_{< \kappa}$. There is an isomorphism
\[
\totalR \Gamma \argp{C, \blank} \cong \ulHom[\LH \mathcal{M}_{< \kappa}]{\homs{C}}{\blank}
\]
of functors $\Ho \mathcal{M}_{< \kappa} \to \Ho \cat{\SSet}$.
\end{enumerate}
\end{lem}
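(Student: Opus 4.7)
For part~(i), the idea is to apply the enriched Yoneda lemma. In simplicial degree $n$, by the definition of the simplicial enrichment on $\mathcal{M}$,
\[
\ulHom[\mathcal{M}]{\homs{C}}{X}_n = \Hom[\mathcal{M}]{\homs{C} \times \Delta^n}{X},
\]
and since $\parens{\homs{C} \times \Delta^n}\argp{D} = \Hom[\mathcal{C}]{D}{C} \times \Delta^n$ as a simplicial set, the ordinary Yoneda lemma identifies this set with $X \argp{C}_n = \Gamma \argp{C, X}_n$. The face and degeneracy operators on both sides match by naturality, giving the claimed simplicial isomorphism, natural in $X$.

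For part~(ii), I would exhibit $\Gamma \argp{C, \blank}$ as the right adjoint of the copower functor $K \mapsto K \times \homs{C}$, which on sections is $D \mapsto K \times \Hom[\mathcal{C}]{D}{C}$. This left adjoint sends monomorphisms of simplicial sets to sectionwise monomorphisms of simplicial presheaves, and in the Joyal--Jardine model structure on $\mathcal{M}$ the cofibrations are precisely the monomorphisms. Moreover, $\mathcal{M}$ is a simplicial closed model category in which every object is cofibrant, so in particular $\homs{C}$ is cofibrant; axiom~SM7 applied with $A = \emptyset$ and $B = \homs{C}$ implies that $\ulHom[\mathcal{M}]{\homs{C}}{\blank}$ preserves fibrations and trivial fibrations. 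By part~(i) this functor agrees with $\Gamma \argp{C, \blank}$, so the latter is right Quillen and admits a total right derived functor.

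For part~(iii), the plan is to compare both sides via a functorial fibrant replacement. Given $X$ in $\mathcal{M}_{< \kappa}$, choose by \autoref{prop:model.category.of.small.simplicial.presheaves} a functorial fibrant replacement $r_X : X \to R X$. By part~(ii),
\[
\totalR \Gamma \argp{C, X} \cong \Gamma \argp{C, R X} \cong \ulHom[\mathcal{M}]{\homs{C}}{R X}
\]
in $\Ho \cat{\SSet}$, using part~(i) for the second isomorphism. Since $\homs{C}$ is cofibrant and $R X$ is fibrant, the classical Dwyer--Kan comparison for simplicial model categories yields a natural weak equivalence $\ulHom[\mathcal{M}]{\homs{C}}{R X} \simeq \ulHom[\LH \mathcal{M}_{< \kappa}]{\homs{C}}{R X}$, and the latter is canonically isomorphic in $\Ho \cat{\SSet}$ to $\ulHom[\LH \mathcal{M}_{< \kappa}]{\homs{C}}{X}$ because $r_X$ is inverted by the hammock localisation. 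Naturality in $X$ follows from naturality of fibrant replacement and of the Dwyer--Kan comparison.

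The main obstacle will be justifying the Dwyer--Kan comparison step of part~(iii) in a way that is natural in $X$ and compatible with the paper's framework. A more self-contained alternative is to apply \autoref{thm:hocolim.formula.for.hammock.spaces.for.simplicial.categories.of.fibrant.objects} to the simplicial category of fibrant objects in $\mathcal{M}_{< \kappa}$ (to which $\homs{C}$ can be mapped via a cofibrant-to-fibrant replacement of $\homs{C}$) and then invoke the standard fact that the inclusion of fibrant objects into a model category induces a Dwyer--Kan equivalence of hammock localisations; this keeps the entire argument within the machinery already developed in Section~5.
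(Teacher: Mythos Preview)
Your proposal is correct and follows essentially the same route as the paper: part~(i) is the (enriched) Yoneda lemma, part~(ii) uses that $\mathcal{M}$ is a simplicial closed model category with all objects cofibrant so that $\ulHom[\mathcal{M}]{\homs{C}}{\blank}$ is right Quillen, and part~(iii) is the standard Dwyer--Kan comparison between simplicial mapping spaces and hammock hom-spaces in a simplicial model category. The paper simply cites the relevant references (Hovey, Hirschhorn, and \citep{Low:2014c}) for the last step rather than unpacking the fibrant-replacement argument as you do; your alternative route via \autoref{thm:hocolim.formula.for.hammock.spaces.for.simplicial.categories.of.fibrant.objects} is unnecessary and more roundabout, since the direct comparison already handles the naturality you were worried about.
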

\begin{proof}
(i). Use the Yoneda lemma.

\bigskip\noindent
(ii). $\ulHom[\mathcal{M}]{\homs{C}}{\blank}$ is a right Quillen functor because $\mathcal{M}$ is a simplicial closed model category where all objects are cofibrant, so $\Gamma \argp{C, \blank}$ is also a right Quillen functor. The existence of a total right derived functor is then a standard result.\footnote{See \eg Theorem 8.5.8 in \citep{Hirschhorn:2003}.}

\bigskip\noindent
(iii). Apply either Remark 5.2.10 in \citep{Hovey:1999} or Proposition 16.6.23 in \citep{Hirschhorn:2003} to Theorem 3.8 in \citep{Low:2014c}.
\end{proof}

\begin{lem}
\label{lem:hammock.localisation.of.locally.fibrant.simplicial.presheaves.vs.all.simplicial.presheaves}
Let $X$ and $Y$ be objects in $\mathcal{E}_{< \kappa}$. Then the morphism
\[
\ulHom[\LH \mathcal{E}_{< \kappa}]{X}{Y} \to \ulHom[\LH \mathcal{M}_{< \kappa}]{X}{Y}
\]
induced by the inclusion $\mathcal{E}_{< \kappa} \embedinto \mathcal{M}_{< \kappa}$ is a weak homotopy equivalence of simplicial sets.
\end{lem}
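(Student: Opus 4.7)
The plan is to reduce both sides of the asserted equivalence to the mapping space $\ulHom[\mathcal{M}_{< \kappa}]{X}{RY}$, where $RY$ is a fibrant replacement of $Y$ in the Jardine--Joyal model structure on $\mathcal{M}_{< \kappa}$. By \autoref{prop:model.category.of.small.simplicial.presheaves}, such functorial fibrant replacements exist; since every fibrant object is $J$-locally fibrant, $RY$ lies in $\mathcal{E}_{< \kappa}$ and the replacement morphism $Y \to RY$ is a $J$-local weak equivalence both in $\mathcal{E}_{< \kappa}$ and in $\mathcal{M}_{< \kappa}$. Weak equivalences become isomorphisms in the respective hammock localisations, so I may assume without loss of generality that $Y$ is already fibrant.

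Because every object of $\mathcal{M}_{< \kappa}$ is cofibrant in the Jardine--Joyal structure, the standard theorem for simplicial model categories (\eg Theorem~17.6.3 in \citep{Hirschhorn:2003}) gives a weak homotopy equivalence $\ulHom[\mathcal{M}_{< \kappa}]{X}{Y} \simeq \ulHom[\LH \mathcal{M}_{< \kappa}]{X}{Y}$. On the $\mathcal{E}_{< \kappa}$ side, combining \autoref{prop:category.of.small.locally.fibrant.simplicial.presheaves} with \autoref{thm:hocolim.formula.for.hammock.spaces.for.simplicial.categories.of.fibrant.objects} yields
\[
\textstyle \ulHom[\LH \mathcal{E}_{< \kappa}]{X}{Y} \simeq \hocolim[\op{\ul{\mathcal{Q}}}] \ulHom[\mathcal{E}_{< \kappa}]{\ul{U}}{Y},
\]
where $\ul{\mathcal{Q}}$ is the simplicial category of $J$-local trivial fibrations over $X$ with domain in $\mathcal{E}_{< \kappa}$. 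By \autoref{lem:category.of.trivial.fibrations.is.cotensored} and \autoref{lem:tensors.and.homotopy.cofinality}, this is further equivalent to the analogous homotopy colimit over the underlying ordinary category $\op{\mathcal{Q}}$. Every morphism in $\mathcal{Q}$ is by 2-out-of-3 a weak equivalence between cofibrant objects of $\mathcal{M}_{< \kappa}$, so since $Y$ is fibrant the functor $U \mapsto \ulHom[\mathcal{M}_{< \kappa}]{U}{Y} = \ulHom[\mathcal{E}_{< \kappa}]{U}{Y}$ sends every morphism to a weak equivalence of simplicial sets; moreover, $\mathcal{Q}$ has the identity $\id_X$ as terminal object, so $\op{\mathcal{Q}}$ has an initial object and is in particular weakly contractible. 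It follows that this homotopy-constant homotopy colimit collapses to its value at $\id_X$, namely $\ulHom[\mathcal{M}_{< \kappa}]{X}{Y}$.

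It remains to check that the composite weak equivalence $\ulHom[\LH \mathcal{E}_{< \kappa}]{X}{Y} \simeq \ulHom[\LH \mathcal{M}_{< \kappa}]{X}{Y}$ coincides with the map induced by the inclusion $\mathcal{E}_{< \kappa} \embedinto \mathcal{M}_{< \kappa}$. The main obstacle is precisely this bookkeeping: each equivalence in the chain must be traced through to confirm that it extends in the correct way along the inclusion, so that the final comparison refines the map in the lemma statement rather than merely sharing its source and target. Each step, however, is constructed from functors and natural transformations that visibly extend to $\mathcal{M}_{< \kappa}$, so I expect the verification to follow essentially formally from the naturality clauses already recorded in \autoref{thm:fundamental.theorem.of.homotopical.two-arrow.calculi}, \autoref{thm:fundamental.theorem.of.homotopical.calculi.of.cocycles}, and \autoref{thm:hocolim.formula.for.hammock.spaces.for.simplicial.categories.of.fibrant.objects}.
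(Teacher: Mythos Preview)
Your argument is essentially correct, but it takes a much longer and more computational route than the paper. The paper's proof is a single citation to Proposition~3.5 of \citet{Dwyer-Kan:1980b}, which gives a general criterion for a full relative subcategory inclusion to induce weak equivalences on hammock hom-spaces; the hypothesis needed here is simply that every object of $\mathcal{M}_{< \kappa}$ is connected by a zigzag of weak equivalences to one in $\mathcal{E}_{< \kappa}$, which is immediate from fibrant replacement.

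By contrast, you compute both sides independently: on the $\mathcal{M}_{< \kappa}$ side via the standard simplicial-model-category comparison, and on the $\mathcal{E}_{< \kappa}$ side via \autoref{thm:hocolim.formula.for.hammock.spaces.for.simplicial.categories.of.fibrant.objects}, then collapse the homotopy colimit using that $\id_X$ is terminal in $\mathcal{Q}$ and that the diagram is objectwise weakly constant (since $Y$ is fibrant and all objects are cofibrant). That collapse step is fine: the initial object of $\op{\mathcal{Q}}$ yields a natural weak equivalence from the constant diagram at $\ulHom[\mathcal{M}_{< \kappa}]{X}{Y}$ to the given diagram, and the homotopy colimit of a constant diagram over a weakly contractible category is its value. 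Your approach has the virtue of exercising the paper's own machinery and avoiding an external black box, but it is considerably longer, and as you yourself flag, the final step---checking that the composite equivalence agrees in $\Ho \cat{\SSet}$ with the map actually induced by the inclusion---is left as an informal expectation rather than carried out. The paper's one-line citation sidesteps all of this.
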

\begin{proof}
Use Proposition 3.5 in \citep{Dwyer-Kan:1980b}.
\end{proof}

The following version of the Verdier hypercovering theorem is due to \citet{Jardine:2012} and \citet{MO:answer-165324}.

\begin{prop}
\label{prop:Jardine-Rezk}
Let $C$ be an object in $\mathcal{C}$ and let $\mathcal{V}$ be the subcategory of $J$-local trivial fibrations in $\mathcal{E}_{< \kappa}$. Then there are isomorphisms
\[
\hspace{-3.0ex}
\totalR \Gamma \argp{C, \blank}
\cong \ulHom[\LH \mathcal{E}_{< \kappa}]{\homs{C}}{\blank}
\cong \nv{\Cocyc[\parens{\mathcal{E}_{< \kappa}}]{\homs{C}}{\blank}}
\cong \nv{\Cocyc[\parens{\mathcal{E}_{< \kappa}}][\mathcal{V}]{\homs{C}}{\blank}}
\hspace{-3.0ex}
\]
of functors $\Ho \mathcal{E}_{< \kappa} \to \Ho \cat{\SSet}$.
\end{prop}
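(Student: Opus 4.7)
The plan is to assemble the four quantities into a chain of three isomorphisms by specialising results from the earlier sections to the relative category $\mathcal{E}_{<\kappa}$. I would proceed left to right through the chain.

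For the first isomorphism, observe that $\homs{C}$ is a discrete simplicial presheaf, so $J$-locally fibrant (its stalks, being filtered colimits of discrete, hence Kan, simplicial sets, are Kan), and it is $\kappa$-presentable under the standing hypothesis $\card{\mor \mathcal{C}} < \kappa$; thus $\homs{C} \in \mathcal{E}_{<\kappa}$. \autoref{lem:right.derived.sections.functor}(iii) identifies $\totalR \Gamma \argp{C, \blank}$ with $\ulHom[\LH \mathcal{M}_{<\kappa}]{\homs{C}}{\blank}$ as functors $\Ho \mathcal{M}_{<\kappa} \to \Ho \cat{\SSet}$, and restricting along the inclusion $\mathcal{E}_{<\kappa} \embedinto \mathcal{M}_{<\kappa}$ and invoking the natural weak equivalence of \autoref{lem:hammock.localisation.of.locally.fibrant.simplicial.presheaves.vs.all.simplicial.presheaves} produces $\totalR \Gamma \argp{C, \blank} \cong \ulHom[\LH \mathcal{E}_{<\kappa}]{\homs{C}}{\blank}$ on $\Ho \mathcal{E}_{<\kappa}$.

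For the second and third isomorphisms, combine \autoref{prop:category.of.small.locally.fibrant.simplicial.presheaves} with \autoref{prop:simplicial.categories.of.fibrant.objects.have.functorial.path.objects} to see that $\mathcal{E}_{<\kappa}$ is a category of fibrant objects with functorial path objects; \autoref{thm:categories.of.fibrant.objects.with.functorial.path.objects.admit.a.homotopical.calculus.of.cocycles} then supplies a homotopical calculus of cocycles whose distinguished class $\mathcal{V}$ is exactly the subcategory of $J$-local trivial fibrations, and \autoref{lem:reducing.extended.three-arrow.zigzags.to.extended.two-arrow.zigzags} upgrades this to a homotopical calculus of right fractions. \autoref{thm:fundamental.theorem.of.homotopical.two-arrow.calculi} then delivers the natural weak equivalence $\nv{\Cocyc[\parens{\mathcal{E}_{<\kappa}}]{\homs{C}}{\blank}} \to \ulHom[\LH \mathcal{E}_{<\kappa}]{\homs{C}}{\blank}$ (noting that $\Cocyc[\parens{\mathcal{E}_{<\kappa}}]{\homs{C}}{\blank} = \mathcal{E}_{<\kappa}^{\bracket{-1;1}} \argp{\homs{C}, \blank}$ by definition), while \autoref{lem:replacing.cocycles.with.special.cocycles} provides the remaining weak equivalence between the full cocycle category and the $\mathcal{V}$-cocycle category. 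Naturality in the second variable of all three steps is built into the cited results (in particular part~(ii) of \autoref{thm:fundamental.theorem.of.homotopical.two-arrow.calculi}), so the composite is an honest isomorphism of functors $\Ho \mathcal{E}_{<\kappa} \to \Ho \cat{\SSet}$; one could equivalently bundle the last two steps into a single appeal to \autoref{thm:fundamental.theorem.of.homotopical.calculi.of.cocycles}(iii).

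The only genuinely non-mechanical point is the opening verification that $\homs{C}$ lies in $\mathcal{E}_{<\kappa}$ together with the transition from $\LH \mathcal{M}_{<\kappa}$ to $\LH \mathcal{E}_{<\kappa}$; once this is in hand, the rest of the proof is simply an assembly of the natural weak equivalences already established in \textsection\textsection~3--5.
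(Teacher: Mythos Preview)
Your proof is correct and follows essentially the same route as the paper's own proof, which simply reads ``Combine \autoref{thm:fundamental.theorem.of.homotopical.two-arrow.calculi} and lemmas~\ref{lem:replacing.cocycles.with.special.cocycles}, \ref{lem:right.derived.sections.functor}, and~\ref{lem:hammock.localisation.of.locally.fibrant.simplicial.presheaves.vs.all.simplicial.presheaves}.'' You have merely unpacked what the paper leaves implicit: the verification that $\homs{C}\in\mathcal{E}_{<\kappa}$, and the chain of implications (via \autoref{prop:category.of.small.locally.fibrant.simplicial.presheaves}, \autoref{prop:simplicial.categories.of.fibrant.objects.have.functorial.path.objects}, \autoref{thm:categories.of.fibrant.objects.with.functorial.path.objects.admit.a.homotopical.calculus.of.cocycles}, and \autoref{lem:reducing.extended.three-arrow.zigzags.to.extended.two-arrow.zigzags}) needed to invoke \autoref{thm:fundamental.theorem.of.homotopical.two-arrow.calculi} for $\mathcal{E}_{<\kappa}$.
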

\begin{proof}
Combine \autoref{thm:fundamental.theorem.of.homotopical.two-arrow.calculi} and lemmas~\ref{lem:replacing.cocycles.with.special.cocycles}, \ref{lem:right.derived.sections.functor}, and~\ref{lem:hammock.localisation.of.locally.fibrant.simplicial.presheaves.vs.all.simplicial.presheaves}.
\end{proof}

One may then derive a homotopy colimit formula for $\totalR \Gamma \argp{C, \blank}$  analogous to Verdier's original colimit formula (\confer Théorème 7.4.1 in \citep[Exposé~V]{SGA4b} or Theorem 8.16 in \citep{Artin-Mazur:1969}). A similar result was previously obtained by \citet{Jardine:2015}: see Theorem~6.17 and Corollary~6.19 in \opcit.

\begin{prop}
\label{prop:non-abelian.Verdier.hypercovering.formula}
Let $C$ be an object in $\mathcal{C}$, let $\ul{\mathcal{Q}}$ be the simplicially enriched full subcategory of the simplicially enriched slice category $\overcat{\parens{\ul{\mathcal{E_{< \kappa}}}}}{\homs{C}}$ spanned by the $J$-local trivial fibrations, and let $\ul{U} : \ul{\mathcal{Q}} \to \ul{\mathcal{E}_{< \kappa}}$ be the evident projection functor. Then there is an isomorphism
\[
\textstyle \totalR \Gamma \argp{C, \blank} \cong \hocolim[\op{\ul{\mathcal{Q}}}] \ulHom[\mathcal{E}_{< \kappa}]{\ul{U}}{\blank}
\]
of functors $\Ho \mathcal{E}_{< \kappa} \to \Ho \cat{\SSet}$.
\end{prop}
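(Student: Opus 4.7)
The plan is to assemble the proposition directly from the hom-space formula proved in Section 5 together with the identification of $\totalR \Gamma \argp{C, \blank}$ established in \autoref{prop:Jardine-Rezk}. The key point is that the simplicial category $\ul{\mathcal{E}_{< \kappa}}$ has been shown (in \autoref{prop:category.of.small.locally.fibrant.simplicial.presheaves}) to be a simplicial category of fibrant objects whose weak equivalences are the $J$-local weak equivalences, so the trivial fibrations appearing in the hypothesis of \autoref{thm:hocolim.formula.for.hammock.spaces.for.simplicial.categories.of.fibrant.objects} are precisely the $J$-local trivial fibrations that define $\ul{\mathcal{Q}}$ here. Thus the $\ul{\mathcal{Q}}$ of this proposition agrees with the one in the general hom-space theorem.

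First, I would apply \autoref{thm:hocolim.formula.for.hammock.spaces.for.simplicial.categories.of.fibrant.objects} with $\ul{\mathcal{C}} = \ul{\mathcal{E}_{<\kappa}}$ and $X = \homs{C}$, which yields a zigzag of weak equivalences
\[
\textstyle \hocolim[\op{\ul{\mathcal{Q}}}] \ulHom[\mathcal{E}_{< \kappa}]{\ul{U}}{\blank} \simeq \ulHom[\LH \mathcal{E}_{< \kappa}]{\homs{C}}{\blank}
\]
of functors $\mathcal{E}_{< \kappa} \to \cat{\SSet}$; in particular the left-hand side preserves weak equivalences, so it descends to a well-defined functor $\Ho \mathcal{E}_{< \kappa} \to \Ho \cat{\SSet}$ and the above zigzag becomes an isomorphism there.

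Next, \autoref{prop:Jardine-Rezk} provides the identification
\[
\totalR \Gamma \argp{C, \blank} \cong \ulHom[\LH \mathcal{E}_{< \kappa}]{\homs{C}}{\blank}
\]
as functors $\Ho \mathcal{E}_{< \kappa} \to \Ho \cat{\SSet}$. Composing this isomorphism with the one obtained in the previous step gives the claimed formula.

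There is no substantive obstacle: the proof is essentially a two-line assembly of \autoref{thm:hocolim.formula.for.hammock.spaces.for.simplicial.categories.of.fibrant.objects} and \autoref{prop:Jardine-Rezk}. The only point that warrants mild care is bookkeeping of naturality \dash checking that the zigzag of weak equivalences supplied by \autoref{thm:hocolim.formula.for.hammock.spaces.for.simplicial.categories.of.fibrant.objects} is natural enough to give an \emph{isomorphism} of functors on $\Ho \mathcal{E}_{<\kappa}$ (rather than merely a pointwise weak equivalence), but this is immediate from the statement of that theorem, which already asserts naturality in the target variable. Hence no separate argument is required.
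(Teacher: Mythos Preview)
Your proposal is correct and matches the paper's own proof, which simply says to apply \autoref{thm:hocolim.formula.for.hammock.spaces.for.simplicial.categories.of.fibrant.objects} and \autoref{prop:Jardine-Rezk}. Your additional remarks about the identification of $\ul{\mathcal{Q}}$ and the naturality bookkeeping are accurate and, if anything, more detailed than the paper's one-line proof.
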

\begin{proof}
Apply \autoref{thm:hocolim.formula.for.hammock.spaces.for.simplicial.categories.of.fibrant.objects} and \autoref{prop:Jardine-Rezk}.
\end{proof}

\begin{cor}
Let $K$ be a Kan complex of cardinality $< \kappa$ and let $\Delta K$ be the constant simplicial presheaf on $\mathcal{C}$ with value $K$. With other notation as above, we have
\[
\textstyle \totalR \Gamma \argp{C, \Delta K} \cong \hocolim[\op{\ul{\mathcal{Q}}}] \ulHom[\SSet]{{\indlim_{\op{\mathcal{C}}}} \circ \ul{U}}{K}
\] 
as objects in $\Ho \cat{\SSet}$, and this is natural in $K$.
\end{cor}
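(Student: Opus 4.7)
The plan is to reduce the statement to the previous proposition by means of the simplicially enriched adjunction $\indlim_{\op{\mathcal{C}}} \dashv \Delta$ between $\cat{\SSet}$ and $\mathcal{M}$, since the functor $\ulHom[\mathcal{E}_{< \kappa}]{\blank}{\Delta K}$ on the right-hand side of \autoref{prop:non-abelian.Verdier.hypercovering.formula} can be rewritten in terms of $\ulHom[\SSet]{\blank}{K}$ precomposed with $\indlim_{\op{\mathcal{C}}}$.

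First I would check that the hypotheses of \autoref{prop:non-abelian.Verdier.hypercovering.formula} apply to $\Delta K$, that is, that $\Delta K$ is an object of $\mathcal{E}_{< \kappa}$. Since $K$ is a Kan complex, the constant simplicial presheaf $\Delta K$ is $J$-locally fibrant (its stalks are all equal to $K$, so the map $\Delta K \to 1$ is a stalkwise Kan fibration, and one checks analogous statements for the other axioms of a $J$-local fibration). The cardinality bound $|K| < \kappa$ together with smallness of $\mathcal{C}$ and regularity of $\kappa$ ensure $\Delta K \in \mathcal{M}_{< \kappa}$. Hence $\Delta K \in \mathcal{E}_{< \kappa}$, and \autoref{prop:non-abelian.Verdier.hypercovering.formula} yields
\[
\textstyle \totalR \Gamma \argp{C, \Delta K} \cong \hocolim[\op{\ul{\mathcal{Q}}}] \ulHom[\mathcal{E}_{< \kappa}]{\ul{U}}{\Delta K}
\]
in $\Ho \cat{\SSet}$.

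Next I would invoke the simplicially enriched adjunction: for any simplicial presheaf $X$ on $\mathcal{C}$ and any simplicial set $L$, the constant-diagram functor $\Delta : \cat{\SSet} \to \mathcal{M}$ is right adjoint (in the simplicially enriched sense) to $\indlim_{\op{\mathcal{C}}} : \mathcal{M} \to \cat{\SSet}$, so there is a natural isomorphism
\[
\ulHom[\mathcal{M}]{X}{\Delta L} \cong \ulHom[\SSet]{\textstyle \indlim_{\op{\mathcal{C}}} X}{L}
\]
of simplicial sets. Since $\mathcal{E}_{< \kappa}$ is a full simplicially enriched subcategory of $\mathcal{M}$, the same isomorphism holds with $\mathcal{M}$ replaced by $\mathcal{E}_{< \kappa}$ when $X$ and $\Delta L$ lie in $\mathcal{E}_{< \kappa}$. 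Applying this pointwise to $X = \ul{U}(p)$ for each object $p$ of $\ul{\mathcal{Q}}$ gives a (simplicially enriched) natural isomorphism
\[
\ulHom[\mathcal{E}_{< \kappa}]{\ul{U}}{\Delta K} \cong \ulHom[\SSet]{{\textstyle \indlim_{\op{\mathcal{C}}}} \circ \ul{U}}{K}
\]
of simplicially enriched functors $\op{\ul{\mathcal{Q}}} \to \ul{\cat{\SSet}}$. Taking $\hocolim[\op{\ul{\mathcal{Q}}}]$ of both sides and composing with the isomorphism from the previous paragraph yields the desired formula. Naturality in $K$ follows from the naturality of the unit and counit of the adjunction $\indlim_{\op{\mathcal{C}}} \dashv \Delta$, together with functoriality of $\hocolim$.

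The only nontrivial point in this plan is verifying that $\Delta K$ is $J$-locally fibrant, but this is a standard observation (constant simplicial presheaves with Kan values have Kan stalks); everything else is essentially bookkeeping and a direct appeal to \autoref{prop:non-abelian.Verdier.hypercovering.formula}.
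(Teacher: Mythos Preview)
Your proposal is correct and follows essentially the same approach as the paper: identify $\ulHom[\mathcal{E}_{< \kappa}]{\ul{U}}{\Delta K} \cong \ulHom[\SSet]{{\indlim_{\op{\mathcal{C}}}} \circ \ul{U}}{K}$ via the simplicially enriched adjunction $\indlim_{\op{\mathcal{C}}} \dashv \Delta$, then invoke \autoref{prop:non-abelian.Verdier.hypercovering.formula}. The paper's proof is terser, omitting your verification that $\Delta K \in \mathcal{E}_{< \kappa}$, but the substance is the same.
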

\begin{proof}
As usual, we have the following isomorphism of simplicially enriched functors $\op{\ul{\mathcal{Q}}} \to \ul{\cat{\SSet}}$:
\[
\textstyle \ulHom[\SSet]{{\indlim_{\op{\mathcal{C}}}} \circ \ul{U}}{K} \cong \ulHom[\mathcal{E}_{< \kappa}]{\ul{U}}{\Delta K}
\]
The claim follows, by \autoref{prop:non-abelian.Verdier.hypercovering.formula}.
\end{proof}

\appendix

\section{Categories of fibrant objects redux}

The following is what \citet{Cisinski:2010a} calls a `catégorie dérivable à gauche':

\begin{dfn}
A \strong{Cisinski fibration category} is a category $\mathcal{C}$ equipped with a pair $\tuple{\mathcal{W}, \mathcal{F}}$ of subclasses of $\mor \mathcal{C}$ satisfying these axioms:
\begin{itemize}
\item[\textbf{\LiningNumbers D0.}] $\mathcal{C}$ has a terminal object $1$. A \strong{fibrant object} in $\mathcal{C}$ is an object $X$ such that the unique morphism $X \to 1$ in $\mathcal{C}$ is in $\mathcal{F}$. Any object isomorphic to a fibrant object is fibrant, and $1$ is fibrant.

\item[\textbf{\LiningNumbers D1.}] $\tuple{\mathcal{C}, \mathcal{W}}$ is a category with weak equivalences.

\item[\textbf{\LiningNumbers D2.}] $\mathcal{F}$ is closed under composition and every isomorphism between fibrant objects in $\mathcal{C}$ is in $\mathcal{F}$. If $p : X \to Y$ is in $\mathcal{F}$ and $g : Y' \to Y$ a morphism between fibrant objects in $\mathcal{C}$, then the pullback of $p$ along $g$ exists in $\mathcal{C}$ and is a morphism that is in $\mathcal{F}$.

\item[\textbf{\LiningNumbers D3.}] If $p : X \to Y$ is in $\mathcal{W} \cap \mathcal{F}$ and $g : Y' \to Y$ is a morphism between fibrant objects in $\mathcal{C}$, then the pullback of $p$ along $f$ (exists in $\mathcal{C}$ and) is a morphism that is in $\mathcal{W} \cap \mathcal{F}$.

\item[\textbf{\LiningNumbers D4.}] If $f : X \to Y$ is a morphism in $\mathcal{C}$ and $Y$ is fibrant, then there exist a morphism $i : X \to \hat{X}$ in $\mathcal{W}$ and a morphism $p : \hat{X} \to Y$ in $\mathcal{F}$ such that $f = p \circ i$.
\end{itemize}
In a Cisinski fibration category as above,
\begin{itemize}
\item a \strong{weak equivalence} is a morphism in $\mathcal{W}$,

\item a \strong{fibration} is a morphism in $\mathcal{F}$, and

\item a \strong{trivial fibration} (or \strong{acyclic fibration}) is a morphism in $\mathcal{W} \cap \mathcal{F}$.
\end{itemize}
We will often abuse notation and say $\mathcal{C}$ is a Cisinski fibration category, without mentioning the data $\mathcal{W}$ and $\mathcal{F}$.
\end{dfn}

\begin{example}
Every category of fibrant objects is a Cisinski fibration category in the obvious way. Moreover, if $\mathcal{C}$ is a category of fibrant objects and $Y$ is an object in $\mathcal{C}$, then the slice category $\overcat{\mathcal{C}}{Y}$ is Cisinski fibration category where the fibrant objects are the fibrations with codomain $Y$.
\end{example}

The following is essentially the statement that every object in a Cisinski fibration category can be replaced with a fibrant object in a homotopically unique way. The proof is due to Denis-Charles Cisinski and also appears in \citep{BHH:2015}; we thank Geoffroy Horel for sharing it with us.

\begin{thm}[Cisinski]
\label{thm:uniqueness.of.fibrant.replacements}
Let $\mathcal{C}$ be a Cisinski fibration category and let $\intr{\mathcal{C}}$ be the full subcategory of $\mathcal{C}$ spanned by the fibrant objects. Then the inclusion $\weq \intr{\mathcal{C}} \embedinto \weq \mathcal{C}$ is a homotopy cofinal functor.
\end{thm}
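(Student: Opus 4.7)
The plan is to fix an object $X$ of $\mathcal{C}$ and show that the comma category $\mathcal{D} := \commacat{X}{\iota}$ is weakly contractible, where $\iota : \weq \intr{\mathcal{C}} \embedinto \weq \mathcal{C}$ denotes the inclusion. Explicitly, an object of $\mathcal{D}$ is a pair $(Y, w)$ with $Y$ fibrant and $w : X \to Y$ a weak equivalence, and a morphism $(Y, w) \to (Y', w')$ is a weak equivalence $\alpha : Y \to Y'$ between fibrant objects satisfying $\alpha \circ w = w'$. My strategy is to verify that $\op{\mathcal{D}}$ is a filtered category, from which weak contractibility of $\nv{\mathcal{D}}$ follows by the classical theorem that nerves of filtered categories are weakly contractible.

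Nonemptiness of $\mathcal{D}$ is immediate from axiom D0 (so that the terminal object $1$ is fibrant) together with D4 applied to the morphism $X \to 1$, which produces a weak equivalence $X \to \hat{X}$ with $\hat{X}$ fibrant. For the binary-cone condition, given $(Y_0, w_0)$ and $(Y_1, w_1)$ in $\mathcal{D}$, the product $Y_0 \times Y_1$ exists as a pullback of $Y_1 \to 1$ along $Y_0 \to 1$ (axiom D2) and is itself fibrant; applying D4 to $\langle w_0, w_1 \rangle : X \to Y_0 \times Y_1$ yields a factorization $X \to Z \to Y_0 \times Y_1$ through a weak equivalence followed by a fibration. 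Then $Z$ is fibrant, and the 2-out-of-3 property applied to the composites $w_i = \pi_i \circ p \circ w$ forces each projection $Z \to Y_i$ to be a weak equivalence, giving morphisms in $\mathcal{D}$ from a common predecessor $(Z, w)$ to both $(Y_i, w_i)$.

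The genuinely delicate step is the coequalizer-type condition: given parallel morphisms $f, g : (Y_0, w_0) \to (Y_1, w_1)$ in $\mathcal{D}$, produce $h : (Z, w) \to (Y_0, w_0)$ in $\mathcal{D}$ with $f \circ h = g \circ h$. My plan is to construct a path object for $Y_1$ by applying D4 to the diagonal $\Delta_{Y_1} : Y_1 \to Y_1 \times Y_1$ (with codomain fibrant by D2), obtaining $Y_1 \xrightarrow{i} \Path{Y_1} \to Y_1 \times Y_1$ with $i$ a weak equivalence and the subsequent arrow a fibration. Pulling back along $\langle f, g \rangle : Y_0 \to Y_1 \times Y_1$ produces a fibration $E \to Y_0$ (the homotopy equalizer of $f$ and $g$), and the hypothesis $f w_0 = g w_0 = w_1$ supplies a lift $X \to E$ through $i \circ w_1 : X \to \Path{Y_1}$. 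To convert this homotopy equalizer into a strict equalizer while preserving the weak equivalence from $X$, one iterates the construction by further pulling back along fibrant replacements of the diagonal inclusion $Y_1 \to \Path{Y_1}$, using axiom D3 to ensure pullbacks of trivial fibrations remain trivial fibrations and 2-out-of-3 to recover the weak equivalence property at each stage.

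The main obstacle is precisely this final step. In the absence of functorial factorizations in a general Cisinski fibration category, one cannot simply form a strict equalizer as a limit; the argument must carefully orchestrate path-object constructions together with iterated fibrant replacement, with axiom D3 playing the critical role in propagating the weak equivalence property through the repeated pullbacks. This is the only point in the proof where all of axioms D2, D3, and D4 interact nontrivially.
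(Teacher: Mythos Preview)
Your first two steps (non-emptiness and the binary-cone condition) are fine, and you have correctly located the difficulty in the parallel-arrow step. Unfortunately the sketch you give there does not close the gap. Pulling back $\Path{Y_1} \to Y_1 \times Y_1$ along $\langle f,g\rangle$ produces a homotopy equaliser $E \to Y_0$, but the two composites $E \to Y_0 \rightrightarrows Y_1$ are only homotopic, not equal; ``iterating'' by pulling back further along fibrant replacements of $i : Y_1 \to \Path{Y_1}$ does not help, because at each stage you again replace a strict diagonal by a fibration and recover only a homotopy, never the equation $f h = g h$. There is no finite tower of such pullbacks that forces a strict equality, and a Cisinski fibration category need not admit the (sequential or more general) limits that an honest transfinite iteration would require. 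In short, you are trying to prove that $\op{\mathcal{D}}$ is filtered, and that statement---demanding a strict coequaliser condition---is not something the axioms D0--D4 are designed to deliver.

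The paper sidesteps exactly this issue by proving something weaker than cofilteredness but still sufficient for weak contractibility. Cisinski's asphericity lemma says it is enough that every finite-poset-shaped diagram in $\commacat{X}{\weq \intr{\mathcal{C}}}$ is connected by a \emph{zigzag} of natural transformations to a constant diagram; zigzags, unlike cones, do not require equalising parallel arrows on the nose. The paper then handles all vertices of the poset simultaneously via a Reedy-style fibrant replacement (Th\'eor\`eme~1.30 and Proposition~1.18 of \emph{Cat\'egories d\'erivables}), which guarantees that the limit of the replaced diagram exists and is fibrant, so a single factorisation of the cone $X \to \prolim \hat{Y}$ produces the required constant diagram in one stroke. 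The moral is that the coherence problem you ran into is dissolved not by iterating factorisations object-by-object but by performing a single factorisation of an entire diagram.
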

\begin{proof}
Let $X$ be an object in $\mathcal{C}$. We must show that the comma category $\commacat{X}{U}$ is weakly contractible. By the asphericity lemma (1.6) in \citep{Cisinski:2010b}, it suffices to verify the following: for any finite poset $\mathcal{J}$ and any diagram $F : \mathcal{J} \to \commacat{X}{\weq \intr{\mathcal{C}}}$, there is a zigzag of natural transformations connecting $F$ to a constant diagram.

First, observe that diagrams $F : \mathcal{J} \to \commacat{X}{\weq \intr{\mathcal{C}}}$ are the same as diagrams functors $Y : \mathcal{J} \to \weq \mathcal{C}$ equipped with a cone $\phi : \Delta X \hoto Y$. By Théorème~1.30 in \citep{Cisinski:2010a}, there is a natural weak equivalence $\theta : Y \hoto \hat{Y}$ where $\hat{Y}$ is fibrant over the boundaries (`fibrant sur les bords'), so by Proposition~1.18 in \opcit, the limit $\prolim_\mathcal{J} \hat{Y}$ exists in $\mathcal{C}$ and fibrant. Thus, the cone $\theta \bcirc \phi : \Delta X \hoto \hat{Y}$ can be factored as a weak equivalence $i : X \to \hat{X}$ in $\mathcal{C}$ followed by a cone $\hat{\phi} : \Delta \hat{X} \hoto \hat{Y}$, where $\hat{X}$ is a fibrant object in $\mathcal{C}$, and hence, we have the following diagram in $\Func{\mathcal{J}}{\weq \mathcal{C}}$:
\[
\begin{tikzcd}
\Delta X \dar[swap]{\phi} \rar[equals] &
\Delta X \dar \rar[equals] &
\Delta X \dar{\Delta i} \\
Y \rar[swap]{\theta} &
\hat{Y} \rar[swap, leftarrow]{\hat{\phi}} &
\Delta \hat{X}
\end{tikzcd}
\]
This shows that $F : \mathcal{J} \to \commacat{X}{\weq \intr{\mathcal{C}}}$ is indeed connected to a constant diagram.
\end{proof}

\begin{cor}
\label{cor:homotopical.uniqueness.of.correspondence.replacements}
Let $\mathcal{C}$ be a category of fibrant objects, let $\tuple{X, Y}$ be a pair of objects in $\mathcal{C}$, and let $\FCorr[\mathcal{C}]{X}{Y}$ be the category of functional correspondences $X \profto Y$. Then the inclusion $U_{X, Y} : \FCorr[\mathcal{C}]{X}{Y} \embedinto \Cocyc[\mathcal{C}]{X}{Y}$ is homotopy cofinal.
\end{cor}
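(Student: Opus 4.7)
The plan is to reduce the statement to \autoref{thm:uniqueness.of.fibrant.replacements} applied to a suitable slice category. Given a cocycle $\tuple{f, w} : X \profto Y$ in $\mathcal{C}$ with middle object $\tilde{X}$, the task is to verify that the comma category $\commacat{\tuple{f, w}}{U_{X, Y}}$ is weakly contractible.

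The key reformulation is the following: a cocycle $\tuple{p, v} : X \profto Y$ with middle object $\tilde{X}'$ is determined by the morphism $\prodtuple{p, v} : \tilde{X}' \to Y \times X$ together with the requirement that its composite with the projection $Y \times X \to X$ be a weak equivalence; and such a cocycle is a functional correspondence precisely when $\prodtuple{p, v}$ is a fibration, \ie when it is fibrant as an object of the slice $\overcat{\mathcal{C}}{Y \times X}$. By the example following the definition of a Cisinski fibration category, $\overcat{\mathcal{C}}{Y \times X}$ is itself a Cisinski fibration category, whose fibrant objects are exactly the fibrations with codomain $Y \times X$.

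Using this reformulation, I would identify $\commacat{\tuple{f, w}}{U_{X, Y}}$ with the category whose objects are factorisations of $\prodtuple{f, w} : \tilde{X} \to Y \times X$ as a weak equivalence followed by a fibration in $\mathcal{C}$ and whose morphisms are weak equivalences over $Y \times X$ compatible with the maps from $\tilde{X}$. The requirement that $v : \tilde{X}' \to X$ be a weak equivalence (which is part of the definition of cocycle) becomes automatic via the 2-out-of-3 property, since $w = v \circ j$ for a weak equivalence $j : \tilde{X} \to \tilde{X}'$. This should yield a canonical isomorphism
\[
\commacat{\tuple{f, w}}{U_{X, Y}} \cong \commacat{\prodtuple{f, w}}{\weq \intr{(\overcat{\mathcal{C}}{Y \times X})}}
\]
of categories, where $\intr{\mathcal{D}}$ denotes the full subcategory of fibrant objects of a Cisinski fibration category $\mathcal{D}$.

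Applying \autoref{thm:uniqueness.of.fibrant.replacements} to the Cisinski fibration category $\overcat{\mathcal{C}}{Y \times X}$ then tells us that the right-hand comma category above is weakly contractible, and hence so is $\commacat{\tuple{f, w}}{U_{X, Y}}$. The main obstacle will be the bookkeeping needed to establish the identification of comma categories, in particular checking that a morphism in $\FCorr[\mathcal{C}]{X}{Y}$ (by definition a weak equivalence between functional correspondences compatible with the maps to $X$ and to $Y$) amounts to the same data as a weak equivalence between fibrations over $Y \times X$. Once this identification is secured, Cisinski's theorem supplies the conclusion without further work.
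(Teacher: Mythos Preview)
Your proposal is correct and follows essentially the same route as the paper's proof: both pass to the slice $\overcat{\mathcal{C}}{Y \times X}$ as a Cisinski fibration category, identify $\commacat{\tuple{f, w}}{U_{X, Y}}$ with the comma category $\commacat{\prodtuple{f, w}}{\weq \intr{\parens{\overcat{\mathcal{C}}{Y \times X}}}}$, and then invoke \autoref{thm:uniqueness.of.fibrant.replacements}. Your explicit mention of the 2-out-of-3 argument (showing that the weak-equivalence condition on $v$ is automatic once $j$ is a weak equivalence) makes the identification of comma categories slightly more transparent than the paper's ``it is not hard to see'', but otherwise the arguments coincide.
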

\begin{proof}
Let $\tuple{f, w} : X \profto Y$ be a cocycle in $\mathcal{C}$. We must show that the comma category $\commacat{\tuple{f, w}}{U_{X, Y}}$ is weakly contractible. Let $\mathcal{D}$ be the slice category $\overcat{\mathcal{C}}{Y \times X}$ considered as a Cisinski fibration category in the obvious way. It is not hard to see that $\FCorr[\mathcal{C}]{X}{Y}$ is isomorphic to a full subcategory of $\weq \overcat{\mathcal{C}}{Y \times X}$, contained in the full subcategory $\weq \intr{\parens{\overcat{\mathcal{C}}{Y \times X}}}$ spanned by the fibrant objects (\ie fibrations in $\mathcal{C}$ with codomain $Y \times X$). Moreover, the comma category $\commacat{\tuple{f, w}}{U_{X, Y}}$ is isomorphic to the comma category $\commacat{\prodtuple{f, w}}{\weq \intr{\parens{\overcat{\mathcal{C}}{Y \times X}}}}$, so by \autoref{thm:uniqueness.of.fibrant.replacements}, $\commacat{\tuple{f, w}}{U_{X, Y}}$ is weakly contractible.
\end{proof}

\needspace{3\baselineskip}
As promised, we obtain the following generalisation of \autoref{thm:categories.of.fibrant.objects.with.functorial.path.objects.admit.a.homotopical.calculus.of.cocycles}:

\begin{thm}
\label{thm:categories.of.fibrant.objects.admit.a.homotopical.calculus.of.cocycles}
Let $\mathcal{C}$ be a category of fibrant objects, let $\mathcal{V}$ be the subcategory of trivial fibrations in $\mathcal{C}$, and let $\FCorr[\mathcal{C}]$ be the category of all functional correspondences in $\mathcal{C}$. Then $\mathcal{V} \subseteq \mor \weq \mathcal{C}$, $\FCorr[\mathcal{C}]$, and $\FCorr[\mathcal{C}] \embedinto \ZzCat{\bracket{-1 ; 1}}{\mathcal{C}}$ constitute a homotopical calculus of cocycles in $\mathcal{C}$.
\end{thm}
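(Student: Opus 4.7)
The plan is to verify in turn the five defining conditions for a homotopical calculus of cocycles. The argument will closely parallel the proof of \autoref{thm:categories.of.fibrant.objects.with.functorial.path.objects.admit.a.homotopical.calculus.of.cocycles}; the only essential difference is that where that proof invoked \autoref{lem:homotopical.uniqueness.of.correspondence.replacements.via.functoriality} (which required functorial path objects), here I would use \autoref{cor:homotopical.uniqueness.of.correspondence.replacements} instead, so the structural outline is the same but the crucial cofinality input is upgraded.

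Four of the five conditions follow immediately. That $\mathcal{V} \subseteq \mor \weq \mathcal{C}$ is definitional; closure of $\mathcal{V}$ under pullback in $\mathcal{C}$ is part of axiom C for categories of fibrant objects; every isomorphism in $\mathcal{C}$ is a trivial fibration by axioms A and B; and for any functional correspondence $\tuple{p, v}$, the leg $v : \tilde{X} \to X$ is the composite of the given fibration $\prodtuple{p, v}$ with a product projection (which is a fibration by axioms C and E) and is a weak equivalence by the definition of functional correspondence, so $U E$ is indeed a $\mathcal{V}$-cocycle. The statement that the composite $\FCorr[\mathcal{C}] \to \ZzCat{\bracket{-1 ; 1}}{\mathcal{C}} \to \mathcal{W} \times \mathcal{W}$ is a Grothendieck fibration and that $U : \FCorr[\mathcal{C}] \embedinto \ZzCat{\bracket{-1 ; 1}}{\mathcal{C}}$ preserves cartesian morphisms is exactly parts (ii) and (iii) of \autoref{lem:Grothendieck.fibration.of.functional.correspondences}; nothing in its proof used functorial path objects.

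The remaining condition is the homotopy cofinality of $U_{X, Y} : \FCorr[\mathcal{C}]{X}{Y} \embedinto \Cocyc[\mathcal{C}]{X}{Y}$ for every pair of objects $X, Y$ in $\mathcal{C}$, which is precisely \autoref{cor:homotopical.uniqueness.of.correspondence.replacements}. Assembling these ingredients completes the verification. The genuine obstacle has already been dealt with in the appendix, namely replacing the functoriality argument by Cisinski's \autoref{thm:uniqueness.of.fibrant.replacements} applied to the slice Cisinski fibration category $\overcat{\mathcal{C}}{Y \times X}$; once that is in place, the present theorem is a short packaging argument and I do not anticipate any further difficulty.
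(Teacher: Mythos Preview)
Your proposal is correct and follows essentially the same approach as the paper: the paper's proof simply says to combine \autoref{lem:Grothendieck.fibration.of.functional.correspondences} and \autoref{cor:homotopical.uniqueness.of.correspondence.replacements}, and your argument is a more detailed unpacking of exactly that combination, verifying each axiom of a homotopical calculus of cocycles and identifying which of those two ingredients handles it.
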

\begin{proof}
Combine \autoref{lem:Grothendieck.fibration.of.functional.correspondences} and \autoref{cor:homotopical.uniqueness.of.correspondence.replacements}.
\end{proof}

\begin{remark}
Building on the above, \citet{Meier:2015} showed that any category of fibrant objects that is a homotopical category is itself a fibrant object in the model category of relative categories of \citet{Barwick-Kan:2012a}.
\end{remark}

\ifdraftdoc

\else
  \printbibliography
\fi

\end{document}